\documentclass[11pt]{amsart}

\usepackage{amscd}
\usepackage{amsmath, amssymb}
\usepackage{xcolor}
\usepackage{esint}
\usepackage{amsfonts}
\newcommand{\de}{\partial}
\newcommand{\ii}{\sqrt{-1}}
\newcommand{\db}{\overline{\partial}}

\newcommand{\ddbar}{i \partial \overline{\partial}}
\newcommand{\Ric}{\mathrm{Ric}}
\newcommand{\ov}[1]{\overline{#1}}

\newcommand{\tr}[2]{\textrm{tr}_{#1}{#2}}
\newcommand{\ti}[1]{\tilde{#1}}
\newcommand{\vp}{\varphi}
\newcommand{\vol}{\mathrm{Vol}}

\newcommand{\diam}{\mathrm{diam}}
\newcommand{\hilb}{\mathrm{Hilb}}
\newcommand{\loc}{\mathrm{loc}}
\newcommand{\rcd}{\mathrm{RCD}}
\newcommand{\reg}{\mathrm{reg}}
\newcommand{\sing}{\mathrm{sing}}
\newcommand{\entry}{\mathrm{entry}}
\newcommand{\exit}{\mathrm{exit}}
\newcommand{\supp}{\mathrm{supp}}
\newcommand{\gh}{\mathrm{GH}}
\newcommand{\Sym}{\mathrm{Sym}}
\newcommand{\ve}{\varepsilon}

\renewcommand{\leq}{\leqslant}
\DeclareMathOperator{\can}{can}
\renewcommand{\geq}{\geqslant}
\renewcommand{\le}{\leqslant}
\renewcommand{\ge}{\geqslant}
\renewcommand{\epsilon}{\varepsilon}

\begin{document}
\newtheorem{claim}{Claim}
\newtheorem{theorem}{Theorem}[section]
\newtheorem{conjecture}[theorem]{Conjecture}
\newtheorem{lemma}[theorem]{Lemma}
\newtheorem{corollary}[theorem]{Corollary}
\newtheorem{proposition}[theorem]{Proposition}
\newtheorem{question}[theorem]{Question}
\newtheorem{conj}[theorem]{Conjecture}
\newtheorem{defn}[theorem]{Definition}
\theoremstyle{definition}
\newtheorem{rmk}[theorem]{Remark}

\numberwithin{equation}{section}

\title{Gromov-Hausdorff limits of immortal K\"ahler-Ricci flows}

\author{Man-Chun Lee}
\address{Department of Mathematics, The Chinese University of Hong Kong, Shatin, N.T., Hong Kong}
\email{mclee@math.cuhk.edu.hk}
\author{Valentino Tosatti}
\address{Courant Institute School of Mathematics, Computing, and Data Science, New York University, 251 Mercer St, New York, NY 10012}
\email{tosatti@cims.nyu.edu}
\author{Junsheng Zhang}
\address{Courant Institute School of Mathematics, Computing, and Data Science, New York University, 251 Mercer St, New York, NY 10012}
\email{jz7561@nyu.edu}

\begin{abstract} We show that the normalized K\"ahler-Ricci flow on a compact K\"ahler manifold with semiample canonical bundle converges in the Gromov-Hausdorff topology to the metric completion of the twisted K\"ahler-Einstein metric on the canonical model, as conjectured by Song-Tian's analytic minimal model program.
\end{abstract}
\maketitle

\section{Introduction}
Let $(X^n,\omega_0)$ be a compact K\"ahler manifold, and let $\omega(t)$ be the solution of the normalized K\"ahler-Ricci flow
\begin{equation}\label{eq--KRF}
\left\{
                \begin{aligned}
                  &\frac{\de}{\de t}\omega(t)=-\Ric(\omega(t))-\omega(t),\\
                  &\omega(0)=\omega_0,
                \end{aligned}
              \right.
\end{equation}
starting at $\omega_0$. We are interested in the case when the solution $\omega(t)$ is {\em immortal}, i.e. it exists for all $t\geq 0$. By a result of Tian-Zhang \cite{TiZh}, this happens if and only if the canonical bundle $K_X$ is nef, i.e. $c_1(K_X)$ is a limit of K\"ahler classes. We are interested in the behavior of the metrics $\omega(t)$ as $t\to\infty$, and refer to the second-named author's survey on this topic \cite{To}.

The Abundance Conjecture in birational geometry, and its natural extension to K\"ahler manifolds, predicts that if $K_X$ is nef then $K_X$ is semiample, which means that $\ell K_X$ is base-point free for some $\ell\geq 1$. Abundance is currently known for $n\leq 3$, and we will assume from now on that $K_X$ is indeed semiample.

In this case, the Kodaira dimension $m:=\kappa(X)$ satisfies $0\leq m\leq n$, and the linear system $|\ell K_X|$ for $\ell$ sufficiently divisible gives a fiber space (the Iitaka fibration of $X$)
\begin{equation}
f:X\to Y\subset\mathbb{P}^N \text{ with } \ell K_X=f^*(\mathcal{O}_{\mathbb{P}^N}(1)|_Y),
\end{equation}
 onto a normal projective variety $Y$ of dimension $\dim Y=m$. This implies that the generic fibers of $f$ are Calabi-Yau $(n-m)$-folds. The variety $Y=\mathrm{Proj}\oplus_{k\geq 0}H^0(X,kK_X)$ is the canonical model of $X$.

We define $D\subset Y$ as the closed subvariety given by the union of the singularities of $Y$ together with the discriminant locus of $f$, so that if we call $X^\circ:=X\backslash f^{-1}(D)$ and $Y^\circ:=Y\backslash D$, then $f:X^\circ\to Y^\circ$ is a proper holomorphic submersion with connected $(n-m)$-dimensional Calabi-Yau fibers.

In the case when $m=0$, we have that $Y$ is a point, $D=\emptyset$, $X$ itself is Calabi-Yau, and the behavior of the flow has been fully understood since the 1980s \cite{Cao}: the flow \eqref{eq--KRF} shrinks the metric smoothly to zero as $t\to\infty$ (in particular, $(X,\omega(t))$ converges to a point in the Gromov-Hausdorff topology), and reparametrizing the flow to have constant volume, it converges smoothly to a Ricci-flat K\"ahler metric on $X$.

The case $m=n$ is also special, because in this case the flow is volume non-collapsed, in the sense that $\vol(X,\omega(t))\geq c>0$ for all $t\geq 0$. In this case $K_X$ is nef and big, and $X$ is known as a minimal model of general type. In this case, Tsuji \cite{Ts} and Tian-Zhang \cite{TiZh} proved that the flow converges smoothly on compact subsets of $X^\circ$ to a negative K\"ahler-Einstein metric $\omega_{\rm can}$ on $Y^\circ$ (pulled back via $f:X^\circ\to Y^\circ,$ which is an isomorphism in this case).

The remaining cases when $0<m<n$ turned out to be vastly more complicated, due to the fact that the flow is now volume collapsing as $t\to\infty$. This setup was first considered by Song-Tian in \cite{ST0,ST}. In this case, after much work by many people (see \cite{To} for references), it was recently shown by Hein and the first and second-named authors \cite{HLT} that as $t\to\infty$, we have $\omega(t)\to f^*\omega_{\rm can}$ smoothly on $X^\circ$, where $\omega_{\can}$ is a twisted K\"ahler-Einstein metric on $Y^\circ$, constructed by Song-Tian \cite{ST}.

In this paper we are interested in the global limiting behavior of $(X,\omega(t))$ as $t\to\infty$, in the Gromov-Hausdorff topology. The following basic conjecture is due to Song-Tian \cite[P.651]{ST0}, \cite[Conjectures 6.2 and 6.3]{ST3}, and it fits into their picture of the analytic minimal model program:
\begin{conjecture}\label{con}
Let $(X^n,\omega_0)$ be a compact K\"ahler manifold with $K_X$ semiample, and let $\omega(t)$ be the solution of the K\"ahler-Ricci flow \eqref{eq--KRF}. Then as $t\to\infty$, $(X,\omega(t))$ converges in the Gromov-Hausdorff topology to the metric completion of $(Y^\circ,\omega_{\rm can})$. This is a compact metric space homeomorphic to the canonical model $Y$.
\end{conjecture}

This conjecture has received much attention since it was first posed in 2006. As mentioned above, we now know \cite{HLT} that $\omega(t)$ converges locally smoothly to $f^*\omega_{\rm can}$ on $X^\circ$. However, obtaining reasonable estimates for the flow near $f^{-1}(D)=X\backslash X^\circ$ has proved to be very challenging.

The case $m=n$ was studied first, and the conjecture is known in low dimensions by Guo-Song-Weinkove \cite{GSW} and Tian-Zhang \cite{TiZ} or under the additional assumption of a uniform Ricci lower bound by Guo \cite{Gu}. When $m=n$, Conjecture \ref{con} was settled by Wang \cite{Wang}. The harder case when $0<m<n$ has also been much studied. In particular, Conjecture \ref{con} was proved by Song-Tian-Zhang \cite{STZ} when $m=1$ and the generic fibers of $f$ are tori, and by Li and the second-named author \cite{LT} in arbitrary dimensions assuming that $Y$ is smooth and the divisorial components of $D$ have simple normal crossings (which always holds when $m=1$).

Let us remark that the uniform diameter bound and the existence of subsequential Gromov-Hausdorff limits of $(X,\omega(t))$ were only achieved recently in \cite{JS}, and even more generally in \cite{GPSS, GPSS2,guedjT,vu2024} assuming only that $K_X$ is nef. Lastly, Sz\'ekelyhidi \cite{Sz} has very recently shown that in the setting of Conjecture \ref{con}, the metric completion of $(Y^\circ,\omega_{\rm can})$ is homeomorphic to $Y$, is a non-collapsed $\rcd(-1,2m)$-space, and inside this metric space, $Y\backslash Y^\circ$ has real Hausdorff codimension at least $2$. We will denote this metric completion by $(Y,d_{\rm can})$.

Our main result finally resolves Conjecture \ref{con} in general:

\begin{theorem}\label{main}In the setting of Conjecture \ref{con}, we have that as $t\to\infty$ the flow $(X,\omega(t))$ converges in the Gromov-Hausdorff topology to $(Y,d_{\mathrm{can}})$. In particular, Conjecture \ref{con} holds.
\end{theorem}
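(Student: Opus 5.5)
The strategy is to show that every sequence $t_i\to\infty$ has a subsequence along which $(X,\omega(t_i))$ converges in the Gromov-Hausdorff topology to $(Y,d_{\rm can})$. By the uniform diameter bound of \cite{JS}, Gromov's precompactness (via a uniform bound on covering numbers, also from \cite{JS,GPSS2}) gives a subsequential limit $(Z,d_Z)$. We then identify $Z$ with $(Y,d_{\rm can})$. The natural candidate for a Gromov-Hausdorff approximation is the fibration map $f:(X,\omega(t))\to (Y,d_{\rm can})$. The locally smooth convergence $\omega(t)\to f^*\omega_{\rm can}$ on $X^\circ$ from \cite{HLT} gives almost-isometry on compact subsets of $X^\circ$. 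The fiber diameters there also tend to zero, since the fiber metrics shrink like $e^{-t}$.

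The first step is a uniform smallness estimate near the bad set. For every $\epsilon>0$ we want a neighborhood $U_\epsilon$ of $D$ in $Y$ such that, for all large $t$, every point of $f^{-1}(U_\epsilon)$ lies within $\omega(t)$-distance $\epsilon$ of $f^{-1}(Y\setminus U_\epsilon)$. The second step is the corresponding statement on the base: $D$ is contained in the $\epsilon$-neighborhood of $Y\setminus U_\epsilon$ in $d_{\rm can}$. This is essentially given by \cite{Sz}, since $D$ has Hausdorff codimension at least $2$ and $(Y,d_{\rm can})$ is the completion of $Y^\circ$.

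The third step is to compare distances between points $p,q\in f^{-1}(K)$, where $K\Subset Y^\circ$ is compact. The inequality $d_t(p,q)\le d_{\rm can}(f(p),f(q))+o(1)$ should follow from a segment argument. Because $D$ has real codimension $\ge2$ in the $\rcd$ space, almost-minimizing $d_{\rm can}$-curves can be chosen to avoid a small neighborhood of $D$, in the style of Cheeger-Colding segment inequalities. We then lift these curves horizontally using smooth convergence on $X^\circ$, pay an $o(1)$ cost to move along fibers, and use step one to control the portion near $D$.

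The reverse inequality $d_{\rm can}(f(p),f(q))\le d_t(p,q)+o(1)$ is where I expect the main obstacle. A $\omega(t)$-geodesic may pass deep into $f^{-1}(D)$ and take shortcuts there. To handle this I would use a Hessian-type or parabolic Schwarz lemma estimate, $\omega(t)\ge c\,f^*\omega_Y$ globally (Song-Tian), which shows $f$ is uniformly Lipschitz. This guarantees that the limit map $Z\to Y$ is at least continuous. The sharper $1$-Lipschitz property with respect to $d_{\rm can}$ I would derive by a volume and segment argument on $X$. For most pairs of points nearby (in measure, using the uniform volume-ratio and Sobolev bounds of \cite{GPSS2,guedjT}), geodesics spend $o(1)$ length in $f^{-1}(U_\epsilon)$. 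Along the remaining portion we have $\omega(t)\ge(1-o(1))f^*\omega_{\rm can}$, which gives the inequality for generic pairs. Density then extends it to all pairs.

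Combining steps one through three, $f$ is an $o(1)$-Gromov-Hausdorff approximation. Hence the limit is independent of the subsequence and equals $(Y,d_{\rm can})$. The homeomorphism to $Y$ is supplied by \cite{Sz}.
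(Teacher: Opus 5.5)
Your easy parts match the paper's reduction in Section \ref{sec--reduction}: density of $X\setminus\tilde V_\epsilon$ and $Y\setminus V_\epsilon$, and $d_t\le d_{\can}+o(1)$ by lifting near-minimizing curves. No segment inequality is needed for the latter: $d_{\can}$ is the completion of the path metric of $\omega_{\can}$ on $Y^\circ$, so near-minimizing curves already lie in $Y^\circ$, and compactness gives a uniform neighborhood of $D$ that they avoid.

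The gap is in the reverse inequality, which is the whole content of the theorem. Your ``volume and segment argument'' for $\omega(t)$-geodesics needs a Cheeger--Colding segment inequality, i.e.\ Bishop--Gromov control of the Jacobian of the exponential map. That requires a Ricci lower bound, which $\omega(t)$ does not have near $f^{-1}(D)$. Volume-ratio lower bounds such as \eqref{thm--volume non-collapsing}, and Sobolev inequalities, do not stop geodesics between generic pairs from funnelling through a region of small volume. For example, a thin strip in the plane on which the metric is conformally shrunk keeps all volume ratios bounded below, yet the geodesics between points near its two ends run through it. Ruling out such shortcuts through $f^{-1}(D)$ is exactly the difficulty. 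This is why the paper replaces Riemannian geodesics by Perelman's $\mathcal L$-geodesics. Reduced-volume monotonicity \eqref{jacob2} gives the Jacobian bound $J(v,\tau)\ge C^{-1}e^{-C/\bar\tau}J(v,\bar\tau)$ used in \eqref{eq--fubini}. One then proves the lower bound on $L_T$ in Proposition \ref{prop--main to prove}, and only afterwards converts it into a lower bound on $d_T$ by integrating $(\partial_\tau+\Delta)\bar L\le 4n$ against a cutoff.

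Second, even granting that a geodesic spends little length in $f^{-1}(U_\epsilon)$, you cannot just discard those pieces. Each excursion must be replaced by a curve in $Y$ joining its entry and exit points with small $d_{\can}$-length. The Schwarz lemma \eqref{schwarz} only makes the $d_Y$-distance small, and $d_{\can}$ is a priori only continuous with respect to $d_Y$. Summing a non-linear modulus of continuity over an uncontrolled number of short excursions need not give $o(1)$. The paper needs three further ingredients here:
\begin{enumerate}
\item The H\"older bound $d_{\can}\le C d_Y^{\alpha/2}$ of Theorem \ref{zatta}, built on the H\"older continuity of $\vp$. It is only available away from $Y\setminus Y^{\reg}$.
\item Consequently, complete avoidance of a neighborhood of $D_0=D\cap(Y\setminus Y^{\reg})$. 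This rests on the $r^{3-\rho_0}$ volume bound of Proposition \ref{thm-volume estimate for D0}, which comes from Proposition \ref{savior} and the Cheeger--Naber volume estimates extended to RCD spaces.
\item A bound of $\eta^{-\epsilon_0}$ on the number of $\eta$-events near $D_1$ and $D_2$, with $\epsilon_0$ chosen in terms of $\alpha$ so that $\eta^{-\epsilon_0(1-\alpha/2)}$ times the $(\alpha/2)$-power of the total excursion length tends to zero. The Minkowski-content bound near $D_2$ in turn uses the $L^{p}$ integrability of $\omega_{\can}^m/\omega_Y^m$ from Lemma \ref{lem--high integrability}.
\end{enumerate}
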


As a consequence of our result, together with the fact that $(Y,d_{\rm can})$ is a non-collapsed $\mathrm{RCD}$ space, it follows that the regular part of the Gromov--Hausdorff limit space has certain convexity properties \cite{CN2012,deng2015}, which is not a priori clear due to the fact that $(X,\omega(t))$ may not have a uniform Ricci lower bound.

In the case when $m=n$, our arguments are different from those in \cite{Wang}, so we obtain a new proof of that case. For general $m$, our proof follows a strategy introduced in \cite{LT}, where a key input is Perelman's monotonicity of the reduced volume, viewed as a parabolic analogue of the Bishop-Gromov volume comparison for Riemannian metrics with Ricci curvature bounded below. This allows for some control over how much time minimizing $\mathcal{L}$-geodesics spend in a neighborhood of the ``bad region'' $X\backslash X^\circ=f^{-1}(D)$. However to complete this strategy, \cite{LT} had to impose some smoothness assumptions on \((X,D)\), under which \(\omega_{\rm can}\) is quasi-isometric, on a generic region, to a \emph{conical K\"ahler metric} up to a small logarithmic error \cite{GTZ2}.

Since no such conical description is available for a general \(Y\), in this paper we refine the analysis in \cite{LT} and carry out a more careful stratification of the singular set \(D\).
We establish the existence of $\mathcal{L}$-geodesics that avoid a small neighborhood of $f^{-1}(Y\setminus Y^{\reg})$ and  almost avoid a small neighborhood of $f^{-1}(D\cap Y^{\reg})$ (or more precisely, they visit this neighborhood a controlled number of times for a controlled length of time). After carefully choosing appropriate neighborhoods of $f^{-1}(D)$ (here the RCD property of $(Y,d_{\rm can})$ is crucial), and combining this almost-avoidance property with a H\"older estimate for $d_{\can}$ on $Y^{\reg}$, we complete the proof of Theorem \ref{main}.

The paper is organized as follows. In Section \ref{sec--pre}, we collect various results from the literature concerning estimates for $(X,\omega(t))$ and $(Y^{\circ},\omega_{\can})$. In Section \ref{sec-holder}, we establish a H\"older estimate for the potential of $\omega_{\can}$ following \cite{GKSS}, and then derive a H\"older estimate for $d_{\can}$ on $Y^{\rm reg}$ using \cite{Li}. In Section \ref{sec--reduction}, following \cite{LT}, we reduce the main theorem to an estimate asserting that the $d_{\can}$-distance can be bounded above by the $\mathcal{L}$-distance.
Section \ref{sec--main} contains the core of the paper. We first prove a general almost-avoidance principle, roughly stating that for suitably small neighborhoods of $f^{-1}(D)$, a generic $\mathcal{L}$-geodesic cannot intersect such neighborhoods too many times. Then we carefully choose the small neighborhoods of $f^{-1}(D)$ to apply the almost-avoidance principle to, and we use the RCD property of $(Y,d_{\can})$ to establish an upper volume bound for these neighborhoods.
 Finally combining the almost-avoidance principle with the H\"older estimate obtained in Section \ref{sec-holder}, we complete the proof.

\subsection*{Acknowledgements} We are grateful to G\'abor Sz\'ekelyhidi for very useful discussions, and to Shouhei Honda, Wenshuai Jiang and Xiaochun Rong for email communications, to Yifan Guo and Lei Zhang for discussions about Poincar\'e inequalities on minimal surfaces, and to Jian Song and Jacob Sturm for pointing out a mistake in an earlier draft. The last-named author also thanks Song Sun and Keshu Zhou for helpful discussions. The first-named author is supported by Hong Kong RGC grants No. 14300623 and No. 14304225, and an Asian Young Scientist Fellowship. The second-named author was partially supported by NSF grant DMS-2404599.

\section{Preliminaries}\label{sec--pre}
In this section we collect some known results from the literature which will be used in our proof of Theorem \ref{main}. The setup is as given in the Introduction.

\subsection*{Notation}
Throughout the paper, $\Psi\left(\epsilon_1, \ldots, \epsilon_k \mid a_1, \ldots, a_\ell\right)$ will denote an $\mathbb{R}$-valued function of these parameters, such that whenever we fix the parameters $a_1,\dots,a_\ell,$ we have \begin{equation}\lim _{\epsilon_1, \ldots, \epsilon_k \rightarrow 0} \Psi=0.\end{equation}
\subsection{The twisted K\"ahler-Einstein metric $\omega_{\rm can}$} By assumption, $K_X$ is semiample, so we can choose $\ell\geq 1$ sufficiently divisible such that $\ell K_X$ is base-point free and the linear system $|\ell K_X|$ defines the Iitaka fibration $f:X\to Y\subset\mathbb{P}^N$ of $X$. Pulling back the coordinate functions on $\mathbb{P}^N$ we obtain a basis $\{s_i\}$ of $H^0(X,\ell K_X)$, and using these we can define a smooth positive volume form $\mathcal{M}$ on $X$ by
\begin{equation}
\mathcal{M}=\left((-1)^{\frac{\ell n^2}{2}}\sum_i s_i\wedge\ov{s_i}\right)^\frac{1}{\ell}.
\end{equation}
In the following, we will let $\omega_Y:=\omega_{\rm FS}|_Y$ be the restriction of the Fubini-Study metric on $\mathbb{P}^N$, which satisfies $[f^*\omega_Y]=\ell c_1(K_X)$. It is shown in \cite{ST} that there is a unique function $\vp\in C^0(Y)\cap C^\infty(Y^\circ)$ which is $\omega_Y$-psh and solves the complex Monge-Amp\`ere equation
\begin{equation}\label{makrf}
(\omega_Y+\ddbar\vp)^m=e^\vp f_*(\mathcal{M}),
\end{equation}
pointwise on $Y^\circ$ and also globally on $Y$ in the sense of pluripotential theory (continuity of $\vp$ is proved in \cite{CGZ,DZ,EGZ}). On $Y^\circ$ we have that
\begin{equation}
\omega_{\rm can}:=\omega_Y+\ddbar\vp
\end{equation} is a K\"ahler metric, which satisfies the twisted K\"ahler-Einstein equation
\begin{equation}
\Ric(\omega_{\rm can})=-\omega_{\rm can}+\omega_{\rm WP},
\end{equation}
where $\omega_{\rm WP}\geq 0$ is a Weil-Petersson form on $Y^\circ$ which measures the variation of the complex structure of the fibers of $f$. Of course, in the case when $m=n$, the fibers of $f$ over $Y^\circ$ are just points, and in this case $\omega_{\rm WP}\equiv 0$, so that $\omega_{\rm can}$ is a K\"ahler-Einstein metric in this case.

\subsection{Known results about $\omega(t)$}
Let $\omega(t), t\geq 0$, be the solution of the K\"ahler-Ricci flow \eqref{eq--KRF} on $X$. On the region $X^\circ\subset X$, the behavior of the flow is completely understood: after much work by a number of people, it was finally proved in \cite{HLT} that
\begin{equation}\label{conv}
\omega(t)\to f^*\omega_{\rm can},
\end{equation}
in the smooth topology (with respect to $\omega_0$) on compact subsets of $X^\circ$ and with locally uniformly bounded Ricci curvature. In this paper, we will only use the locally uniform convergence which was proved earlier in \cite{TWY}, and more precisely we will need the following result from \cite[Theorem 1.2 and p.685]{TWY}: given a compact subset $K\subset X^\circ$, on $K$ we have
\begin{equation}\label{useless}
\|\omega(t)-(f^*\omega_{\rm can}+e^{-t}\omega_{\rm SRF})\|_{C^0(K,\omega(t))}\to 0,
\end{equation}
where $\omega_{\rm SRF}$ is a ``semi-Ricci flat'' closed real $(1,1)$-form on $X^\circ$ which restricts to a Ricci-flat K\"ahler metric on every fiber of $f$ in $X^\circ$ (in the case when $m=n$, these fibers are points, and $\omega_{\rm SRF}\equiv 0$). Observe that given $K\Subset X^\circ$, we can find $T>0$ such that $f^*\omega_{\rm can}+e^{-t}\omega_{\rm SRF}$ is a K\"ahler metric on $K$ for all $t\geq T$, uniformly equivalent to $f^*\omega_Y+e^{-t}\omega_0$. From \eqref{useless} we immediately deduce that for $\hat \omega_t:=f^*\omega_{\rm can}+e^{-t}\omega_{\rm SRF}$,
\begin{equation}\label{twyconv}
(1-\Psi(t^{-1}|K))\hat \omega_t\leq \omega(t)\leq (1+\Psi(t^{-1}|K))\hat \omega_t.
\end{equation}

The above bounds hold on compact subsets of $X^\circ$. We also have the following bounds which hold on all of $X\times[0,\infty)$:
by \cite{ST2} (and the earlier \cite{Zh} when $m=n$), we know that the scalar curvature of $\omega(t)$ is uniformly bounded, i.e.
\begin{equation}\label{stscal}
\sup_X|R(\omega(t))|\leq C,
\end{equation}
for all $t\geq 0$, and also that the volume form of $\omega(t)$ satisfies
\begin{equation}\label{volscal}
C^{-1}e^{-(n-m)t}\omega_0^{n}\leq\omega(t)^{n}\leq Ce^{-(n-m)t}\omega_0^{n},
\end{equation}
on $X\times [0,\infty).$
By the ``parabolic Schwarz Lemma'' estimate \cite[Proposition 2.2]{ST2} (and also \cite[(3.4)]{TZ3} for the case when $Y$ is singular), we know that on $X\times [0,\infty),$
\begin{equation}\label{schwarz}
\omega(t)\geq C^{-1}f^*\omega_Y.
\end{equation}

We will also need the following diameter and volume non-collapsing estimates, which were recently established in \cite{GPSS,GPSS2,GPSS3,guedjT,vu2024}:
\begin{equation}
\diam(X,\omega(t))\leq C,\quad t\geq 0,
\end{equation}
\begin{equation}\label{thm--volume non-collapsing}
\operatorname{Vol}_{\omega(t)}\left(B_{\omega(t)}(x, r)\right) \geq C_\delta^{-1} r^{2 n+\delta} \vol(X,\omega(t)),
\end{equation}
for any $0<\delta<1$ and $x\in X, 0<r<\diam(X,\omega(t)).$ Note that when $m=n$, one can even take $\delta=0$ by \cite{Wang}, although we will not need this; all we will use is that $\frac{\operatorname{Vol}_{\omega(t)}\left(B_{\omega(t)}(x, r)\right)}{\vol(X,\omega(t))}\geq F(r)>0$ for all $x\in X$ and $0<r<1$.

\subsection{Known results about $\omega_{\rm can}$}

As for the structure of $\omega_{\can}$, the following theorem was recently established in \cite[Theorem 15]{Sz} for general $0<m\leq n$. The case $m=n$, i.e., the volume non-collapsing case, was proved earlier in \cite[Theorem 1.2]{So}.
\begin{theorem}\label{thm--rcd}
	The metric completion of $(Y^{\circ}, \omega_{\can})$ is homeomorphic to $Y$ and is a non-collapsed $\rcd(-1,2m)$-space, which we will denote by $(Y, d_{\can})$. Moreover,
\begin{equation}
    \dim_{\mathcal H}(Y \setminus Y^{\circ}) \le 2m - 2.
\end{equation}
\end{theorem}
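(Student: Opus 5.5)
Theorem \ref{thm--rcd} is quoted from \cite[Theorem 15]{Sz} (and \cite[Theorem 1.2]{So} for $m=n$), so we will not reprove it. We only sketch how one would establish it, since the later sections rely on each of its three conclusions.

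The plan is to approximate $\omega_{\can}$ by smooth Kähler metrics with a uniform Ricci lower bound and pass to the limit. On a log resolution $\pi:\tilde Y\to Y$ we will solve perturbed twisted Monge--Ampère equations
\[
(\pi^*\omega_Y+\epsilon\hat\omega+\ddbar\vp_\epsilon)^m=e^{\vp_\epsilon}\,\frac{\pi^*f_*\mathcal M}{\text{(smoothing of the singular densities)}}\cdots
\]
The twisting and the boundary densities are regularized so that $\Ric(\omega_\epsilon)\geq-\omega_\epsilon-\Psi(\epsilon)\,(\text{semipositive error})$. Here we use that $\omega_{\rm WP}\geq0$: the twist only helps the lower bound, and the singular part of $f_*\mathcal M$ has the form $|s|^{2a}$ with $a>-1$ (klt type), which can be smoothed preserving $\Ric\geq -1-\delta$.

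Next we obtain uniform $C^0$ bounds on $\vp_\epsilon$ by Kołodziej/EGZ, and smooth convergence on compact subsets of $\pi^{-1}(Y^\circ)$. Volumes are fixed by cohomology, and the Ricci lower bound gives volume noncollapsing via Bishop--Gromov once a uniform diameter bound is available. The diameter bound should follow from the $L^p$ density bounds, as in \cite{GPSS}. By Cheeger--Colding, the limits are then non-collapsed $\rcd(-1,2m)$ spaces, stable under measured Gromov--Hausdorff limits.

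The main obstacle is identifying the Gromov--Hausdorff limit $Z$ with the metric completion of $(Y^\circ,\omega_{\can})$. We must show that $Z\setminus Y^\circ$ is small, namely of Hausdorff codimension $\geq2$, so that $Y^\circ$ is dense and geodesically almost convex. This is where Cheeger--Colding--Tian codimension-two regularity for non-collapsed limits enters: points near $\pi^{-1}(D)$ have tangent cones that are not $\mathbb R^{2m}$, which gives $\dim_{\mathcal H}\leq 2m-2$. Knowing that the singular set has codimension $2$ also shows that the intrinsic distance on $Y^\circ$ agrees with the restriction of $d_Z$, so $Z$ is the completion.

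Finally we produce a homeomorphism $Z\to Y$. We extend the identity on $Y^\circ$ using the Schwarz-lemma bound $\omega_{\can}\geq C^{-1}\omega_Y$, which gives a Lipschitz map $Z\to Y$. Injectivity should follow from a Hölder bound on distances along fibers of $\pi$, via connectedness of fibers and the $C^0$ control on potentials. A continuous bijection from a compact space to a Hausdorff space is then a homeomorphism.
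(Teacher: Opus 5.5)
Citing \cite[Theorem 15]{Sz} (and \cite[Theorem 1.2]{So} when $m=n$) is exactly what the paper does, and that is all the statement needs. The sketch you attach, however, does not work as written, and its first step is a genuine gap.

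A bound $\Ric(\omega_\epsilon)\geq-\omega_\epsilon-\Psi(\epsilon)\cdot(\text{error})$ only feeds into Cheeger--Colding theory if the error is dominated by $\omega_\epsilon$ itself. On a log resolution $\pi:\tilde Y\to Y$ it is not. The $\pi$-exceptional divisors enter $\pi^*f_*\mathcal M$ with exponents of both signs, and fiber integration also produces logarithmic factors, so the density is not just $|s|^{2a}$. Smoothing these factors, or regularizing the current extension of $\omega_{\rm WP}$, creates negative curvature near the exceptional locus, which is exactly where $\omega_\epsilon\in\pi^*[\omega_Y]+\epsilon[\hat\omega]$ degenerates.

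Indeed, if your first step worked, it would exhibit $(Y,d_{\can})$ as a non-collapsed Ricci limit space of $2m$-dimensional manifolds. The Remark following Theorem~\ref{thm--rcd} says this is not known in general when $m<n$. For $m=n$ such approximations do exist, but on a special resolution: the smooth minimal model $X$, which is crepant ($K_X=f^*K_Y$). There one solves $\Ric(\omega_\epsilon)=-\omega_\epsilon+\epsilon\theta$ in $c_1(K_X)+\epsilon[\theta]$ with $\theta$ K\"ahler, so $\Ric(\omega_\epsilon)\geq-\omega_\epsilon$. For $m<n$, the natural approximations with a genuine Ricci lower bound live on the $n$-dimensional total space $X$ (e.g.\ $\Ric(\omega_\epsilon)=-\omega_\epsilon+\epsilon\omega_0$ in $c_1(K_X)+\epsilon[\omega_0]$), and they collapse onto $Y$. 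So the non-collapsed $\rcd(-1,2m)$ structure with respect to $\mathcal H^{2m}$ has to come from a different argument, as in \cite{Sz}.

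The later steps also have gaps. Cheeger--Colding bounds the Hausdorff dimension of the metric singular set $\mathcal S$, the points having a tangent cone other than $\mathbb R^{2m}$. Deducing $\dim_{\mathcal H}(Y\setminus Y^\circ)\leq 2m-2$ from this would require $D\subset\mathcal S$, and that is false for $m<n$. Points of $D\cap Y^{\reg}$ over which the fibers degenerate can be metrically regular. For example, near an $I_k$ fiber of an elliptic surface, $\omega_{\can}$ is asymptotic to a constant multiple of $-\log|z|^2\,\ii dz\wedge d\bar z$, whose tangent cone at $z=0$ is $\mathbb C$. The paper's decomposition $D=D_0\sqcup D_1\sqcup D_2$, with $D_2=D\cap\mathcal R_{\epsilon_1}$, reflects exactly this, and a dimension bound for $D$ near its metrically regular points needs its own argument.

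Finally, injectivity of the Lipschitz map $Z\to Y$ is the heart of the homeomorphism statement. By compactness, it amounts to $d_{\can}$ being uniformly continuous with respect to $d_Y$ all the way up to $D$, including across positive-dimensional fibers of $\pi$. $C^0$ bounds on the potentials do not by themselves control distances. The H\"older bound you appeal to is, near $Y\setminus Y^{\reg}$, precisely what the Remark following Theorem~\ref{zatta} describes as currently unavailable; Theorem~\ref{zatta} itself only holds on $Y_r$.
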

From the RCD property it follows in particular that there exists $\kappa>0$ such that for any $y\in Y$ and $0<r<1$, we have
			\begin{equation}\label{kazzo2}
				\mathcal H^{2m}(B^{d_{\can}}(y,r))\geq \kappa r^{2m},
			\end{equation}
and also that for any sequence of open sets $V_{\epsilon}$ of $Y$ such that $\bigcap_{\epsilon}V_{\epsilon}=D$, we have
			\begin{equation}\label{kazzo}
				\lim_{\epsilon\rightarrow 0}\mathcal H^{2m}(V_\epsilon)=0.
			\end{equation}
\begin{rmk}
When $m=n$, it is known that $(Y,d_{\rm can})$ is a non-collapsed Ricci limit space by \cite{So}. Moreover, in general combining \cite[Theorem 1.3]{STZ} and \cite[Theorem 4]{Sz}, we know that $(Y,d_{\rm can})$ is also a Ricci limit space. However, when $m<n$, we do not know whether $(Y,d_{\rm can})$ is a non-collapsed Ricci limit space in general, although this is true when $Y$ is smooth, see e.g. \cite[Proposition 2.1]{LT}.
\end{rmk}

\section{H\"older bound on the potential and distance function of $\omega_{\rm can}$}\label{sec-holder}
In this section we prove H\"older bounds for the K\"ahler potential of $\omega_{\rm can}$ and for the distance function $d_{\rm can}$ on $Y^{\rm reg}$. The proof of the potential H\"older bound follows closely the recent work of Guo-Ko\l odziej-Song-Sturm \cite{GKSS}, while the argument to deduce from this the H\"older bound for $d_{\rm can}$ follows the method discovered by Li \cite{Li}. In \cite{GKSS} the potential H\"older bound was proved for families of polarized K\"ahler manifolds and for smoothable projective varieties, under natural assumptions. Our observation is that using the RCD property of $(Y,d_{\rm can})$, we can run their arguments with only small changes even though in our case $Y$ is singular and need not be smoothable. For the reader's convenience, we provide most of the details.

\subsection{H\"older bound for the potential of $\omega_{\rm can}$}
Recall that on $Y^\circ$ we have the K\"ahler metric $\omega_{\rm can}=\omega_Y+\ddbar\vp$, which solves the complex Monge-Amp\`ere equation in \eqref{makrf}, and $\vp$ extends to a continuous function on $Y$ with $\sup_Y \varphi = 0$. Since the right-hand side of \eqref{makrf} is known to be in $L^p(Y,\omega_Y^m)$ for some $p>1$ (see \cite{ST}), if $Y$ were smooth then \cite{Kol} would imply that $\vp$ is H\"older continuous on $Y$. It is widely believed that this statement still holds for solutions of such Monge-Amp\`ere equations on normal compact K\"ahler analytic spaces. The main result of this subsection is that this does indeed hold for our $\omega_{\rm can}$:
\begin{theorem}\label{thm-holder continuity of potential}
The function $\vp$ on $Y$ is H\"older continuous (with respect to the distance function $d_Y$), i.e. we have
\begin{equation}\label{patetico}
    |\varphi(x) - \varphi(y)|\le C\, d_Y(x,y)^{\alpha},
\end{equation}
for some constants $C>0$ and $0<\alpha<1$ and for all $x,y \in Y$.
\end{theorem}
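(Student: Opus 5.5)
We follow the strategy of Guo--Ko\l odziej--Song--Sturm: compare $\vp$ with its regularizations and control the error by an $L^1$ estimate together with a stability estimate for the Monge--Amp\`ere equation. The singular, possibly non-smoothable $Y$ is handled by the RCD geometry of $(Y,d_{\can})$ from Theorem \ref{thm--rcd}.

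\textbf{Step 1 (regularization).} Embed $Y\subset\mathbb{P}^N$ and define a local regularization of $\vp$ adapted to $\omega_Y$. Concretely, set
\[
\vp_\delta(x)=\sup_{d_Y(x,y)\le\delta}\vp(y)
\]
or an averaged Kiselman--Demailly type regularization obtained by pulling back along a resolution $\pi:\tilde Y\to Y$. The resolution replaces $\omega_Y$ by the semipositive big form $\pi^*\omega_Y$, so the quasi-psh envelope techniques still apply. We aim for $\vp_\delta\ge\vp$, and for $\vp_\delta-C\delta^{a}$ (after a small correction such as $(1-\delta^{b})\vp_\delta$) to be $\omega_Y$-psh up to an error $C\delta^{c}$.

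\textbf{Step 2 (stability).} Use the Ko\l odziej-type stability estimate with respect to the capacity of $\pi^*\omega_Y$. Since the density $e^\vp f_*\mathcal M$ lies in $L^p$ with $p>1$, we get
\[
\sup(\vp_\delta-\vp)\le C\|(\vp_\delta-\vp)_+\|_{L^1(\mu)}^{\gamma}+C\delta^{c}
\]
for the measure $\mu=\omega_{\can}^m$. This inequality holds for semipositive big classes by the results of Eyssidieux--Guedj--Zeriahi. Hence it suffices to show
\[
\int_Y(\vp_\delta-\vp)\,\omega_{\can}^m\le C\delta^{\beta}.
\]

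\textbf{Step 3 ($L^1$ bound via RCD).} This is where we replace the smoothing/family arguments of \cite{GKSS}. Since $\omega_{\can}\ge\omega_Y$-type bounds fail, we use the reverse direction: the Schwarz lemma \eqref{schwarz}, passed to the limit, gives $\omega_{\can}\ge C^{-1}\omega_Y$. Thus $d_Y\le C d_{\can}$, and a $d_Y$-ball of radius $\delta$ contains the corresponding control. We estimate $\vp_\delta(x)-\vp(x)$ by integrating $|\nabla\vp|$ along curves. Concretely, $\omega_{\can}=\omega_Y+\ddbar\vp$ with $\tr{\omega_{\can}}{\omega_Y}\le C$, and $\int_Y|\de\vp|^2_{\omega_{\can}}\omega_{\can}^m$ is bounded by integration by parts. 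The integration by parts is justified because $Y\setminus Y^\circ$ has codimension $\ge 2$ and cutoff functions exist on RCD spaces. The $(1,2)$-Poincar\'e inequality and volume doubling on $(Y,d_{\can})$ then give a Sobolev/Morrey-type oscillation control: averages of $\vp$ over $d_{\can}$-balls of radius $r$ differ from $\vp$ by $O(r^{\beta})$ in $L^1$. Combining this with $d_Y\le Cd_{\can}$ and with a reverse comparison of ball volumes (from \eqref{kazzo2} and the $L^p$ density) yields the $L^1$ bound.

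\textbf{Main obstacle.} The key difficulty is Step 3, and more precisely obtaining a \emph{reverse} Hölder comparison $d_{\can}\le C d_Y^{\,a}$ or a substitute. $\vp_\delta$ is defined through $d_Y$-balls, while the analysis lives on $d_{\can}$-balls, which may be much larger. We first try to avoid this by defining the regularization directly through $d_{\can}$-heat flow: take $\vp_\delta=H_{\delta^2}\vp$, the RCD heat semigroup. Its Laplacian is bounded below by $-C\delta^{-2}\cdot$(oscillation) via Li--Yau/Bakry--\'Emery gradient bounds, and the $L^1$ error is $\le C\delta\,\|\nabla\vp\|_{L^2}$. The final conversion to $d_Y$-Hölder continuity then uses continuity of $\vp$, the homeomorphism $(Y,d_{\can})\cong Y$, and a Łojasiewicz-type comparison between the two distances on the algebraic variety $Y$.
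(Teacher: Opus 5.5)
There are two genuine gaps, and both sit where the difficulty of the statement lies.

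\textbf{The regularization in Steps 1--2.} These steps need a competitor $\vp_\delta\geq\vp$ that is $\omega_Y$-psh up to a loss of positivity of size $C\delta^{c}$, measured against $\omega_Y$. Only then does the stability estimate apply. You state this as a goal but never construct such a competitor, and none of your candidates works on a singular $Y$.
\begin{itemize}
\item The supremum over $d_Y$-balls has no reason to be quasi-psh on $Y$. Even on a manifold one needs Demailly's regularization through the exponential map, combined with a Kiselman--Legendre transform.
\item Demailly regularization of $\vp\circ\pi$ on a resolution loses positivity measured by a K\"ahler form on $\tilde Y$, and this form is not dominated by $\pi^*\omega_Y$ near the exceptional locus. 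Absorbing the loss via Kodaira's lemma introduces a potential with logarithmic poles there, so the resulting bounds degenerate near $\pi^{-1}(Y^{\sing})$. This is precisely why H\"older continuity of such potentials on general singular varieties is only conjectural.
\item The heat-flow substitute does not repair this. There is no reason for the heat semigroup of $(Y,d_{\can})$ to preserve quasi-plurisubharmonicity with respect to $\omega_Y$. A lower bound on $\Delta H_{\delta^2}\vp$ is not a lower bound on $\omega_Y+\ddbar H_{\delta^2}\vp$, and a loss of order $\delta^{-2}$ is not small. So $H_{\delta^2}\vp$ is not an admissible competitor in the stability inequality.
\end{itemize}

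\textbf{The metric conversion.} Whatever the RCD structure gives you is measured in $d_{\can}$, whereas the statement is about $d_Y$. Note that $\Delta_{\omega_{\can}}\vp=m-\tr{\omega_{\can}}{\omega_Y}$ is bounded by the Schwarz lemma, so regularity of $\vp$ in $d_{\can}$ is not where the difficulty lies. To convert, you need $d_{\can}\leq C d_Y^{a}$, but the available inequality $d_Y\leq Cd_{\can}$ goes the wrong way. No \L ojasiewicz inequality is available here, since $d_{\can}$ is a transcendental object, not a semialgebraic or subanalytic one. The homeomorphism $(Y,d_{\can})\cong Y$ together with compactness only gives some modulus of continuity, not a power law. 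In the paper, a reverse bound $d_{\can}\leq C(r)d_Y^{\alpha/2}$ is proved only on $Y_r$, and it is \emph{derived from} the present theorem (Theorem \ref{zatta}). The global version is left open, so you cannot use it here.

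\textbf{What the paper does instead.} The missing idea is to use approximants whose derivatives are controlled directly in $\omega_Y$. Following \cite{GKSS}, the paper takes the Bergman-kernel potentials $\vp_k=\frac1k\log\sum_i|\sigma_i|^2_{h_Y^k}$.
\begin{itemize}
\item Two-sided bounds on the density of states give $\|\vp_k-\vp\|_{L^\infty}\leq \log A/k$. This is the only place the RCD property enters, via \cite{Sz2} and the partial $C^0$ estimate of \cite{LiuSz}.
\item Effective generation via Skoda division on the essentially Stein manifold $Y^{\reg}$, iterated along $k=(m+2)^r$, gives $|\nabla\vp_k|_{\omega_Y}\leq Ak^{K-1}$.
\item Choosing $k$ in terms of $d_Y(x,y)$ then yields the H\"older bound in $d_Y$ directly.
\end{itemize}
This route needs no stability estimate and no comparison between $d_{\can}$ and $d_Y$.
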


Here $d_Y$ denotes the associated distance function induced from $\omega_Y$. For ease of notation, we will denote by $L=\mathcal{O}_{\mathbb{P}^N}(1)|_Y$, and up to replacing it by a suitable positive multiple (without relabeling), we may assume that for any $k,\ell\in \mathbb N_{>0}$,
\begin{equation}\label{eq--surjectivity of multiplicative map}
	\Sym^{k}H^0(Y,\ell L)\rightarrow H^0(Y,k\ell L) \text{ is surjective.}
\end{equation}
Recall that $\omega_Y=\omega_{\rm FS}|_Y$, and let also $h_Y=h_{\rm FS}|_Y$ and define a singular Hermitian metric on $L$ by $h_L:=h_Ye^{-\varphi}$, whose curvature form is $\omega_{\rm can}$. Then we obtain $L^2$-Hermitian metrics $\hilb_k$ on global holomorphic sections $H^0(Y,kL)$ for any $k\in \mathbb N_{>0}$ by
\begin{equation}
	\langle s,\sigma\rangle_{\hilb_k}:=\int_Y\langle s,\sigma\rangle_{h_L^k}(k\omega_{\can})^m.
\end{equation}The density of states function is given by \begin{equation}
\rho_{k}=\sum_{\alpha}\vert s_{\alpha}\vert^2_{h_L^k},
\end{equation}where $\{s_{\alpha}\}$ is any $L^2$-orthonormal basis of $H^0(Y,kL)$ with respect to $\hilb_k$.

With these preparations, we have the following uniform bounds for the density of states function:
\begin{theorem}\label{thm--uniform bound on bergman kernel}
	There exists an $r\in \mathbb N_{>0}$ and $A>0$ such that for any $k\in \mathbb N_{>0}$ and $x\in Y$, we have
	\begin{equation}
		A^{-1}\leq \rho_{rk}(x)\leq A.
	\end{equation}
\end{theorem}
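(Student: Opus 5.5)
Both inequalities will follow the partial $C^0$ strategy of Donaldson--Sun, adapted as in Guo--Ko\l odziej--Song--Sturm \cite{GKSS}, with the RCD structure of $(Y,d_{\can})$ from Theorem \ref{thm--rcd} replacing smooth Ricci-limit theory.

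\emph{Upper bound.} Recall that $\rho_k(x)=\sup\{|s(x)|^2_{h_L^k}: \|s\|_{\hilb_k}=1\}$. For $x\in Y^\circ$, the section $s$ is holomorphic and $\omega_{\can}$ has $\Ric(\omega_{\can})\geq-\omega_{\can}$, since $\omega_{\rm WP}\geq 0$. A Bochner computation gives
\begin{equation*}
\Delta_{k\omega_{\can}}|s|^2_{h_L^k}\geq -C|s|^2_{h_L^k}.
\end{equation*}
I would run Moser iteration on $(Y,k\omega_{\can})$. The required Sobolev inequality on balls of radius $1$ is uniform in $k$, because $(Y,k d_{\can})$ is a non-collapsed $\rcd(-1/k,2m)$ space with volume lower bound \eqref{kazzo2}. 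Integrating by parts across the singular set is justified because $Y\setminus Y^\circ$ has codimension $\geq 2$ and $|s|^2_{h_L^k}$ is bounded, as $\vp$ is continuous. Alternatively, one can regard $|s|_{h_L^k}$ as a subsolution in the RCD sense. This gives $\rho_k\leq A$ for all $k$.

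\emph{Lower bound.} This is the main obstacle. I would argue by contradiction, following Donaldson--Sun. Suppose $\rho_{k_i}(x_i)\to 0$. Pass to pointed rescalings $(Y,k_i d_{\can},x_i)$, which subconverge to a tangent-cone-like limit. The limit is a non-collapsed $\rcd(0,2m)$ space, hence close at some scale to a metric cone.

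Cheeger--Colding--Tian type structure theory for RCD spaces gives, at each point, a scale $r\in[r_0,1]$ uniformly controlled by volume. At that scale the rescaled ball is Gromov--Hausdorff close to a cone $C(Z)$ whose regular part has good cutoff functions with small $L^2$ gradient. On $C(Z)$ take the model section $e^{-|z|^2/4}$, transplant it to $Y^\circ$ using the smooth convergence on the regular part, and multiply by a cutoff. Solve $\db$ with H\"ormander $L^2$ estimates to correct it to a holomorphic section of $kL$. This uses the positivity $\Ric(\omega_{\can})+k\omega_{\can}>0$ and the fact that $L^2$ methods extend over $Y\setminus Y^\circ$ by codimension and normality. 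This produces a section $s$ with $|s(x_i)|^2\geq c$ and $\|s\|\leq C$, contradicting $\rho_{k_i}(x_i)\to 0$.

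The finitely many cone types arising at the chosen scales (a compactness argument) force the restriction to multiples $rk$. Surjectivity \eqref{eq--surjectivity of multiplicative map} then propagates the bound: products of sections give lower bounds for $\rho_{rk}$ for all $k$ from finitely many levels. This final step would adapt \cite{GKSS}.

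The hardest step will be making the cone-approximation and the H\"ormander technique rigorous on the singular, possibly non-smoothable $Y$, where the rescaled limits are only known to be RCD spaces rather than Ricci limits. I would first try to use the RCD regularity of the limit cone, Cheeger--Colding--Tian structure results for RCD spaces, and the smooth convergence on regular parts.
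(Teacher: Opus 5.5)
Your upper bound is the paper's argument: Moser iteration on $(Y,k\omega_{\can})$, with Sobolev constants made uniform by the non-collapsed RCD structure. The paper simply quotes this from \cite[Proposition 19]{Sz2}.

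The lower bound, however, has a genuine gap at its core. Since $\Ric(\omega_{\can})=-\omega_{\can}+\omega_{\rm WP}$ has no upper bound near $D$, and the points $x_i$ may approach or lie in $D$, the rescalings $(Y,k_i d_{\can},x_i)$ converge with only Gromov--Hausdorff (Reifenberg-type) control, even over the regular part of the limit. There is no ``smooth convergence on the regular part'' with which to transplant $e^{-|z|^2/4}$, so the Donaldson--Sun construction as you describe it does not run.

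This is exactly the situation handled by the Liu--Sz\'ekelyhidi partial $C^0$ estimate under a Ricci lower bound, \cite[Proposition 3.1]{LiuSz}. The paper applies it to the whole family $\{(Y,kL,k\omega_{\can})\}_{k\geq 1}$, combined with the observation in \cite[Theorem 1.5]{ZhK}. Its essential hypothesis is precisely what you assume without proof when you say the cones have ``good cutoff functions'': every metric cone $V$ arising as a pointed limit of $(Y,k_i\omega_{\can},x_i)$ must have $V\setminus\mathcal R_\epsilon(V)$ of zero $2$-capacity. With only a Ricci lower bound this set has codimension $2$, so the property is not automatic. The paper obtains it from the \cite{CDS2}-based argument of \cite[Proposition 5.1]{LiuSz}, as explained in \cite{Sz}. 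Appealing to ``RCD regularity of the limit cone'' does not supply it.

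The bookkeeping around the contradiction also fails as stated. A hypothesis $\rho_{k_i}(x_i)\to 0$ cannot be contradicted this way, because a peaked section is produced only at some level $\ell k_i$. The bounded factor $\ell$ is needed both to reach a scale where the ball is close to a cone and to make the holonomy of the induced flat connection on the regular part of the cone nearly trivial. You never address holonomy, yet without it the model section cannot be transplanted at all. What one actually obtains is: there are $K,b>0$ such that for all $x\in Y$ and $k\geq 1$, some $\ell=\ell(x,k)\in[1,K]$ satisfies $\rho_{\ell k}(x)\geq b$.

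The restriction to multiples does not come from ``finitely many cone types'' (there need not be finitely many). Surjectivity of $\Sym^kH^0(Y,\ell L)\to H^0(Y,k\ell L)$ plays no role in the propagation. Nor can bounds at finitely many fixed levels be propagated: powers of a peaked section at a fixed level $\ell_0$ only give $\rho_{j\ell_0}(x)\geq b^jA^{1-j}j^{-m}$, which degenerates as $j\to\infty$.

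The correct finish is to take $r=K!$. For each $(x,k)$, take the peaked section $s$ at level $\ell k$ with $\|s\|_{\hilb_{\ell k}}=1$ and raise it to the bounded power $j=r/\ell\leq K!$. The upper bound $|s|^2_{h_L^{\ell k}}\leq A$ gives $\|s^j\|^2_{\hilb_{rk}}\leq j^mA^{j-1}$, hence $\rho_{rk}(x)\geq b^jA^{1-j}j^{-m}$, which is uniform in $k$. This is \cite[Lemma 3.1]{DS1}.
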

\begin{proof}
The upper bound for the density of states is standard in the smooth setting and follows from Moser iteration.
In the present singular setting, it can be proved using the $\mathrm{RCD}$ property of $(Y,\omega_{\can})$, see \cite[Proposition 19]{Sz2}.

The lower bound is essentially contained in \cite[Proposition 3.1]{LiuSz}, combined with an observation in \cite[Theorem 1.5]{ZhK}. As discussed in \cite[p.11]{Sz}, the argument in \cite[Proposition 5.1]{LiuSz}, which relies on \cite{CDS2}, implies that any metric cone $V$ that arises as a pointed Gromov-Hausdorff limit of $(Y,k_i\omega_i,x_i)$ for some $k_i\rightarrow \infty$ and points $x_i\in Y$, satisfies that $V\setminus \mathcal R_{\epsilon}(V)$ has zero $2$-capacity for any $\ve>0$, where $\mathcal{R}_\ve$ is defined in analogy with \eqref{reg} below.
Therefore \cite[Proposition 3.1]{LiuSz} applies to the spaces $\{(Y, k\omega_{\can})\mid k\in \mathbb N_{>0}\}$, and it shows that there exist $K\in \mathbb N_{>0}$ and $b>0$ such that for any $k\in \mathbb N_{>0}$ and $x\in  Y$, there exists an integer $\ell=\ell(x,k)\in [1,K]$ such that
\begin{equation}\label{eq--lower bound changing polarization}
	\rho_{\ell,kL}(x)\geq b.
\end{equation}Here we use $\rho_{\ell,kL}$ to emphasize that the density of states function for the line bundle $kL$, which clearly satisfies
\begin{equation}\label{scaling of polarization}
	\rho_{\ell,kL}=\rho_{\ell k,L}.
\end{equation} If we let $r=K!$, then the lower bound for $\rho_{rk,L}$ follows from \eqref{eq--lower bound changing polarization}, \eqref{scaling of polarization} and the rough lower bound in \cite[Lemma 3.1]{DS1} for the density of states function when raising powers.
\end{proof}

Replacing $L$ by $rL$, we may assume in the following that $r=1$. Since $Y$ is a projective variety, we know that $Y^{\reg}$ is an essentially Stein manifold, i.e. there exists an analytic hypersurface $V\subset Y^{\reg}$ such that $Y^{\reg}\setminus V$ is Stein. To see this, we can choose an effective ample divisor $D$ such that $Y^{\sing}\subset \supp (D)$ and let $V=\supp (D)\setminus Y^{\sing}$. Then
\begin{equation}
	Y^{\reg}\setminus V=Y\setminus \supp(D)
\end{equation}is Stein. By \cite[Theorem 2.1, Theorem 2.2]{varolin}, we know that the Skoda division theorem still holds on $Y^{\reg}$ and since $Y$ is normal, any holomorphic section in $H^0(Y^{\reg},M)$ where $M$ is a line bundle on $Y$ extends to a section in $H^0(Y,M)$. Then one can argue exactly as in \cite[Theorem 5]{GKSS} (see also \cite[Proposition 7]{LiT}, and see also the related work in \cite{Fi}), using the bound on the density of states function to get the following:
\begin{theorem}\label{thm--effective generation}
	Let $\ell_0,\ell_1\geq 1$ and $k\geq (m+2+\ell_1)\ell_0$. Let $s_0,\cdots,s_{N_{\ell_0}}$ be an orthonormal basis of $H^0(Y,\ell_0L)$ with respect to $\hilb_{\ell_0}$. Then for any $U\in H^0(Y,kL)$, we can write
	\begin{equation}
		U=\sum_{\alpha_1, \ldots, \alpha_{\ell_1}=0}^{N_{\ell_0}} U\left(\alpha_1, \ldots, \alpha_{\ell_1}\right) s_{\alpha_1} \cdots s_{\alpha_{\ell_1}},
	\end{equation}with the estimate
	\begin{equation}
		\|U(\alpha_1,\cdots,\alpha_{\ell_1})\|^2_{\hilb_{k-\ell_1\ell_0}}\leq \frac{(m+\ell_1)!}{m!\ell_1!}\frac{(k-\ell_1\ell_0)^m}{k^m}A^{2m+2+\ell_1}\|U\|^2_{\hilb_k}.
	\end{equation}
\end{theorem}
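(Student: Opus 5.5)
The plan is to follow the Skoda-division argument of \cite[Theorem 5]{GKSS} by induction on $\ell_1$, running each step on the Stein open set $Y^{\reg}\setminus V$ and extending across $Y\setminus(Y^{\reg}\setminus V)$ using normality. The base case is $\ell_1=1$. The full induction is written in the metric $h_L$ and the measure $\omega_{\can}^m$, for which the density of states bound of Theorem \ref{thm--uniform bound on bergman kernel} is available.

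\textbf{Division step ($\ell_1=1$).} Let $s_0,\dots,s_{N_{\ell_0}}$ be the $\hilb_{\ell_0}$-orthonormal basis, so that $|s|^2:=\sum_\alpha|s_\alpha|^2_{h_L^{\ell_0}}=\rho_{\ell_0}\in[A^{-1},A]$. Hence the tuple $(s_\alpha)$ has no common zeros on $Y$. Given $U\in H^0(Y,kL)$, apply Skoda's division theorem, in the form valid on $Y^{\reg}$ by \cite{varolin}, to the line bundle $(k-\ell_0)L$ with weight $h_L^{k-\ell_0}$. The curvature of this weight is $(k-\ell_0)\omega_{\can}\ge0$. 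Writing $\mathrm{Ric}$ for the Ricci curvature of the volume form $(k\omega_{\can})^m$, we use that $\omega_{\can}^m=e^{\vp}f_*\mathcal M$ is globally $h_L$-related to $L$ up to the twist. With Skoda exponent $q=\min(m,N_{\ell_0})+\epsilon$, the required positivity reduces to $k-\ell_0\ge(q+1)\ell_0$ plus a curvature correction. This is where $k\ge(m+2)\ell_0$ enters. Skoda then produces holomorphic $U_\alpha$ on $Y^{\reg}\setminus V$ with $U=\sum U_\alpha s_\alpha$ and
\[
\int\frac{\sum_\alpha|U_\alpha|^2}{|s|^{2(q+1)-2}}\le C_q\int\frac{|U|^2}{|s|^{2(q+1)}}.
\]
Inserting $A^{-1}\le|s|^2\le A$ gives $\|U_\alpha\|^2\le C_qA^{2q+2}\|U\|^2$, after tracking the normalization $(k\omega_{\can})^m$ versus $((k-\ell_0)\omega_{\can})^m$, which gives the factor $(k-\ell_0)^m/k^m$. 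By this $L^2$ bound the $U_\alpha$ extend across $V$ by Riemann extension, since $V$ is a hypersurface in a manifold and $L^2$ holomorphic sections extend. They then extend across $Y^{\sing}$ by normality. Hence $U_\alpha\in H^0(Y,(k-\ell_0)L)$.

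\textbf{Induction and combinatorics.} We iterate the division step $\ell_1$ times, each time dividing the coefficients by the same basis. The hypothesis $k\ge(m+2+\ell_1)\ell_0$ ensures that at every stage the remaining degree $k-j\ell_0$ still satisfies $k-j\ell_0\ge(m+2)\ell_0$. To obtain the stated constant $\frac{(m+\ell_1)!}{m!\ell_1!}A^{2m+2+\ell_1}$, rather than $A^{(2m+2)\ell_1}$, we follow \cite{GKSS}. There one performs a single Skoda division by the symmetric products $s_{\alpha_1}\cdots s_{\alpha_{\ell_1}}$, or equivalently uses the refined Skoda estimate with exponent $\ell_1$. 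Then $\sum|s_{\alpha_1}\cdots s_{\alpha_{\ell_1}}|^2=|s|^{2\ell_1}$ is bounded between $A^{\mp\ell_1}$, and the number of independent monomials gives the binomial coefficient. Only one Skoda application is then used, with the $A^{2m+2}$ coming from the denominator and $A^{\ell_1}$ from the numerator.

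\textbf{Main obstacle.} I expect the main obstacle to be justifying Skoda's theorem with the singular, only continuous, weight $h_L=h_Ye^{-\vp}$ on the noncompact $Y^{\reg}$, and making the curvature condition correct. Two points need care. First, completeness or Stein-ness: \cite{varolin} handles essentially Stein manifolds. Second, the volume form $\omega_{\can}^m$ is smooth only on $Y^\circ$. I would absorb this by working with the canonical bundle twist $K_{Y^{\reg}}$ and writing $(k\omega_{\can})^m$ as $|\cdot|^2$ of a section of $K$ against $h_L^{-1}$ times a psh factor. This uses equation \eqref{makrf} and $\omega_{\rm WP}\ge0$, which give the needed sign.
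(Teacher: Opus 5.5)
Your single division step agrees with what the paper intends. The paper's proof only invokes Skoda on $Y^{\reg}$ via \cite{varolin}, extension by normality, and ``argue exactly as in \cite[Theorem 5]{GKSS}''. In local terms, let $\phi$ be a local potential of $\omega_{\can}$ and write $\omega_{\can}^m=e^{-\psi}$ times the Euclidean volume form. Then $\phi+\psi$ is psh on $Y^\circ$ because $\Ric(\omega_{\can})+\omega_{\can}=\omega_{\rm WP}\ge0$, and it extends across $D\cap Y^{\reg}$ because $\omega_{\can}\ge C^{-1}\omega_Y$. Consequently, dividing a section of $k'L$ by $(s_\alpha)$ with input weight $|s|^{-2a}$ requires $k'-1\ge a\ell_0$; the $K$-twist costs one copy of $\omega_{\can}$, not $\ell_0$.

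The genuine gap is your mechanism for general $\ell_1$. A single Skoda division by the monomials $s_{\alpha_1}\cdots s_{\alpha_{\ell_1}}$, which are sections of $\ell_1\ell_0L$, requires $k-1\ge(\alpha m+1)\ell_1\ell_0$ with $\alpha>1$, i.e. roughly $k\ge(m+1)\ell_1\ell_0$. This is not implied by $k\ge(m+2+\ell_1)\ell_0$ once $m,\ell_1\ge2$. It is hopeless in Lemma \ref{lem--exponential}, where $\ell_0=1$ and $\ell_1=k-m-2$. Even where it applies, it yields $\frac{\alpha}{\alpha-1}A^{\ell_1(2\alpha m+1)}$, not $A^{2m+2+\ell_1}$. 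Also, Skoda's constant does not count generators: the number of monomials is $\frac{(N_{\ell_0}+\ell_1)!}{N_{\ell_0}!\,\ell_1!}$, which involves $N_{\ell_0}$, not $m$. Iterating your $\ell_1=1$ step and returning to the $\hilb$-norm after each division does respect the degree hypothesis. However, it gives a constant of size $(m+1)^{\ell_1}A^{(2m+3)\ell_1}$. That would suffice for Lemmas \ref{lem--exponential} and \ref{lem--polynomial bound}, but it is not the stated estimate.

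The missing idea is to iterate the division by the same $s_\alpha$ while staying in weighted norms, lowering the Skoda exponent by one at each step. At the $j$-th step ($1\le j\le\ell_1$), divide a tuple of sections of $(k-(j-1)\ell_0)L$ using Skoda with $\alpha_jm=m+\ell_1+1-j$. The input weight is then $|s|^{-2(m+\ell_1+2-j)}$, and the output weight $|s|^{-2(m+\ell_1+1-j)}$ is exactly the input weight of the next step.
\begin{itemize}
\item The effective local weight is $(k-1-(m+\ell_1+1)\ell_0)\phi+(\phi+\psi)$ at every step. So the single condition $k-1\ge(m+\ell_1+1)\ell_0$, which follows from $k\ge(m+2+\ell_1)\ell_0$, suffices throughout.
\item The Skoda constants multiply to $\prod_{j=1}^{\ell_1}\frac{\alpha_j}{\alpha_j-1}=\prod_{i=1}^{\ell_1}\frac{m+i}{i}=\frac{(m+\ell_1)!}{m!\ell_1!}$.
\item The density of states bound enters only at the two ends: $\rho_{\ell_0}^{-(m+\ell_1+1)}\le A^{m+\ell_1+1}$ to pass from $\|U\|^2_{\hilb_k}$ to the first weighted norm, and $\rho_{\ell_0}^{m+1}\le A^{m+1}$ to return from the last weighted norm to $\hilb_{k-\ell_1\ell_0}$. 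Together these give $A^{2m+2+\ell_1}$.
\item The factor $(k-\ell_1\ell_0)^m/k^m$ is the change of normalization from $(k\omega_{\can})^m$ to $((k-\ell_1\ell_0)\omega_{\can})^m$.
\end{itemize}
All steps can be carried out on the Stein set $Y^{\reg}\setminus V$ and extended at the end, as you propose.
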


Let $\{s_0,\cdots, s_{N_1}\}$ be an orthonormal basis of $H^0(Y,L)$ with respect to $\hilb_1$, where $N_1=N$.
By increasing $A$ if necessary, we have that
\begin{equation}\label{eq--bound on potential}
    -A \leq \varphi \leq 0,
\end{equation}
on $Y$.
Then, as in \cite{GKSS}, we can define the approximating potentials
\begin{equation}
	\varphi_k=\frac{1}{k} \log \left(\sum_{i=0}^{N_k}\left|\sigma_i\right|_{h_Y^k}^2\right)=\frac{1}{k} \log \left(\sum_{i=0}^{N_k}\left|\sigma_i\right|_{h_L^k}^2\right)+\varphi,
\end{equation}where $\{\sigma_i\}$ is an orthonormal basis of $H^0(Y,kL)$ with respect to $\hilb_k$. Thanks to the uniform bound for the density of states function in Theorem \ref{thm--uniform bound on bergman kernel}, we know that
\begin{equation}\label{eq--C^0-approximation}
	\begin{aligned}
		\|\varphi_k-\varphi\|_{L^{\infty}(Y)}\leq \frac{\log A}{k}.
	\end{aligned}
\end{equation}

In the following, all connections on $Y^{\rm reg}$ are taken with respect to $\omega_Y$ and $h_Y$.
\begin{lemma}\label{lem--exponential}
	There exists a $B>0$ such that for any $k\in \mathbb N_{>0}$ and  $\sigma\in H^0(Y,kL)$, and $x\in Y^{\reg}$ we have
	\begin{equation}
		\|\nabla \sigma\|_{L^{\infty}(Y^{\reg},\omega_Y,h_Y^k)}\leq B^k \|\sigma\|_{\hilb_k}.
	\end{equation}
\end{lemma}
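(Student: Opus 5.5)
The idea is to reduce everything to a uniform $C^0$ bound on $\sigma$ and then get the gradient bound from the embedding $Y\subset\mathbb{P}^N$. By the surjectivity assumption \eqref{eq--surjectivity of multiplicative map}, every $\sigma\in H^0(Y,kL)$ is the restriction of a degree $k$ homogeneous polynomial $P$ on $\mathbb{C}^{N+1}$, i.e. of a section $\tilde\sigma\in H^0(\mathbb{P}^N,\mathcal{O}(k))$. The connection on $Y^{\reg}$ is induced from $h_{\rm FS}$ and $\omega_{\rm FS}$, so $\nabla\sigma$ is the tangential part of $\nabla^{\rm FS}\tilde\sigma$ along $Y^{\reg}$. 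Therefore
\[
|\nabla\sigma|_{\omega_Y,h_Y^k}\leq \sup_{\mathbb{P}^N}|\nabla^{\rm FS}\tilde\sigma|_{h_{\rm FS}^k}.
\]
This reduces the lemma to bounding $\tilde\sigma$ in terms of $\|\sigma\|_{\hilb_k}$, with constants at most exponential in $k$. The plan is to control the sup norm first, then the extension, then the derivative.

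First I bound $\sup_Y|\sigma|_{h_Y^k}$. Since $-A\le\vp\le0$ by \eqref{eq--bound on potential}, the norms $|\sigma|_{h_L^k}$ and $|\sigma|_{h_Y^k}$ differ by at most a factor $e^{Ak/2}$. The density of states bound of Theorem~\ref{thm--uniform bound on bergman kernel} gives $|\sigma|^2_{h_L^k}\le\rho_k\|\sigma\|^2_{\hilb_k}\le A\|\sigma\|^2_{\hilb_k}$. Together these yield $\sup_Y|\sigma|_{h_Y^k}\le C^k\|\sigma\|_{\hilb_k}$.

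Next I choose the extension $\tilde\sigma$ with a controlled norm, which is the main obstacle. The restriction map $H^0(\mathbb{P}^N,\mathcal{O}(k))\to H^0(Y,kL)$ is surjective, but I need a right inverse with operator norm at most $C^k$ from the sup norm on $Y$ to the sup norm on $\mathbb{P}^N$. I would get this from Theorem~\ref{thm--effective generation} with $\ell_0=1$ and $\ell_1$ of size about $k-(m+2)$. This writes $\sigma=\sum U(\alpha)s_{\alpha_1}\cdots s_{\alpha_{\ell_1}}$, where each $s_\alpha\in H^0(Y,L)$ is a fixed linear form and each coefficient $U(\alpha)$ has degree $k-\ell_1\le m+2$. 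Each $U(\alpha)$ lies in one of finitely many fixed finite-dimensional spaces $H^0(Y,jL)$ with $j\le m+2$, so on those spaces a fixed linear extension operator is bounded by a constant. The combinatorial factor and the number of multi-indices, $(N+1)^{\ell_1}$, are both at most $C^k$. Each $s_\alpha$ extends to a fixed linear form $\tilde s_\alpha$ with $\sup|\tilde s_\alpha|_{h_{\rm FS}}\le C$. Hence
\[
\tilde\sigma:=\sum_\alpha \tilde U(\alpha)\,\tilde s_{\alpha_1}\cdots\tilde s_{\alpha_{\ell_1}}
\]
satisfies $\sup_{\mathbb{P}^N}|\tilde\sigma|\le C^k\|\sigma\|_{\hilb_k}$. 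In this step I use the $\hilb_{k-\ell_1}$ bound on $U(\alpha)$ directly, together with the norm equivalence on the fixed spaces $H^0(Y,jL)$ for $j\le m+2$. So this route bypasses the $C^0$ bound of the first step, which serves only as a sanity check.

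Finally I bound the derivative on $\mathbb{P}^N$. For $\tilde\sigma\in H^0(\mathbb{P}^N,\mathcal{O}(k))$ the standard estimate is
\[
\sup|\nabla^{\rm FS}\tilde\sigma|\le Ck\,\sup|\tilde\sigma|.
\]
One way to see it is to apply Cauchy estimates to $P$ on the unit sphere in $\mathbb{C}^{N+1}$, where $|\tilde\sigma|_{h_{\rm FS}^k}=|P|$. Another is a Bernstein-type inequality; in any case the loss is only polynomial in $k$, and even $C^k$ would suffice. Combining the three steps gives $|\nabla\sigma|\le Ck\cdot C^k\|\sigma\|_{\hilb_k}\le B^k\|\sigma\|_{\hilb_k}$, which is the lemma. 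The only delicate point is keeping the extension operator exponentially bounded, which Theorem~\ref{thm--effective generation} handles.
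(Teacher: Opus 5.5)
Your proposal is correct and rests on the same key step as the paper's proof. You apply Theorem~\ref{thm--effective generation} with $\ell_0=1$ and $\ell_1=k-m-2$, so that the coefficients lie in the fixed space $H^0(Y,(m+2)L)$ and the combinatorial factor, the power $A^{\ell_1}$ and the $(N+1)^{\ell_1}$ terms are all at most exponential in $k$. Note that $\ell_1$ must be taken exactly $k-m-2$: the hypothesis $k\geq m+2+\ell_1$ forces $k-\ell_1\geq m+2$, not $\leq m+2$ as you wrote. Also, $k\leq m+2$ is handled separately by finite-dimensionality.

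The only difference is the final derivative step. The paper applies the Leibniz rule to the decomposition directly on $Y^{\reg}$, using $|s_i|_{h_Y}\leq A$, $|\nabla s_i|\leq C_1$ and the non-effective gradient bound for the degree-$(m+2)$ coefficients. You instead build an extension $\tilde\sigma$ on $\mathbb{P}^N$ with $\sup|\tilde\sigma|\leq C^k\|\sigma\|_{\hilb_k}$ and apply a Bernstein/Cauchy estimate there. This works equally well, because $\nabla\sigma$ is the restriction of $\nabla^{\rm FS}\tilde\sigma$ to $TY^{\reg}$.
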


\begin{proof}
	After rescaling, we can assume  $\|\sigma\|_{\hilb_k}=1$. When $k=1,$ after a unitary transformation we can assume that $\sigma$ is the restriction of $E_0$, the first coordinate function on $\mathbb C^{N_1+1}$. Then we have that for any $x\in Y^{\reg}$,
	\begin{equation}\label{eq-trivial inequality}
		|\nabla \sigma|_{\omega_Y,h_Y}(x)\leq \|\nabla E_0\|_{L^{\infty}(\mathbb P^{N_1},\omega_{\rm FS},h_{\rm FS})}\leq C_1.
	\end{equation}For general $k$,
since we have the surjective map  \eqref{eq--surjectivity of multiplicative map}, we can assume that $\sigma$ is the restriction of a homogeneous polynomial $p=p(E_0,\cdots,E_{N_1})$. Since any two norms on a finite dimensional vector space are equivalent, the coefficients of $p$ are bounded by a uniform constant $C_k$. Then a similar argument as in \eqref{eq-trivial inequality} gives
	\begin{equation}\label{eq--non-effective bound}
		\|\nabla \sigma\|_{L^{\infty}(Y^{\reg},\omega_Y,h_Y^k)}\leq C_k=C_k \|\sigma\|_{\hilb_k}.
	\end{equation}
	We thus need to show that we can take $C_k=B^k$. Clearly we may assume without loss that $k>m+2$.
	Recall that we have fixed $\{s_0,\cdots, s_{N_1}\}$, an orthonormal basis of $H^0(Y,L)$ with respect to $\hilb_1$. By Theorem \ref{thm--uniform bound on bergman kernel} and the normalization $\sup_Y \varphi=0$, we know that
	\begin{equation}\label{eq--C0bound for potential}
		\|s_i\|_{L^{\infty}(Y^{\reg},h_Y)}\leq \|s_i\|_{L^{\infty}(Y^{\reg},h_L)}\leq A, \quad 0\leq i\leq N_1.
	\end{equation}
	We then apply Theorem \ref{thm--effective generation} with
	\begin{equation}
		\ell_0=1, \quad \ell_1=k-m-2,
	\end{equation}
	to get that
	\begin{equation}\label{eq--effective generation}
		\sigma=\sum \sigma(\alpha_1,\cdots \alpha_{k-m-2})s_{\alpha_1}\cdots
		s_{\alpha_{k-m-2}},
	\end{equation}with $\sigma(\alpha_1,\cdots \alpha_{k-m-2})\in H^0(Y,(m+2)L)$ and
	\begin{equation}
	\begin{aligned}
	\|\sigma(\alpha_1,\cdots \alpha_{k-m-2})\|^2_{\hilb_{m+2}}&\leq \frac{(k-2)!}{m!(k-m-2)!}\frac{(m+2)^m}{k^m}A^{k+m}\\
		&\leq C(m,A)A^k
	\end{aligned}\end{equation}
and then Theorem \ref{thm--uniform bound on bergman kernel} gives
\begin{equation}\label{sux}
	\|\sigma(\alpha_1,\cdots \alpha_{k-m-2})\|^2_{L^\infty(Y^{\rm reg}, \omega_Y, h_Y^{m+2})}\leq C(m,A)A^{k+1}.
	\end{equation}
Then by the non-effective bound \eqref{eq--non-effective bound}, we know that
	\begin{equation}\label{eq--noneff bound}
		|\nabla \sigma(\alpha_1,\cdots \alpha_{k-m-2})|\leq C(m,A)A^{k/2}.
	\end{equation}
The key point here is that the constant $C(m,A)$ is independent of $k$.
	Then using \eqref{eq-trivial inequality}, \eqref{eq--C0bound for potential}, \eqref{eq--effective generation}, \eqref{sux} and \eqref{eq--noneff bound}, and being wasteful with the powers of $k$, we can estimate
	\begin{equation}
	\begin{aligned}
&\|\nabla \sigma\|_{L^{\infty}(Y^{\reg},\omega_Y,h_Y^k)}\\
		&\leq C(m,A)N_1^{k-m-2}(A^k\cdot A^{k-m-2} +A^{k+1}\cdot A^{k-m-2}\cdot C_1(k-m-2)) \\
		&\leq C(m,A)N_1^k(A+1)^{2k}\leq B^k,
	\end{aligned}	
	\end{equation}by choosing $B$ large.	
\end{proof}

\begin{lemma}\label{lem--polynomial bound}
	There exists $K>0$ such that for any $r\in \mathbb N_{\geq 2}$,  $k=(m+2)^r$ and $U\in H^0(Y, kL)$, we have
	\begin{equation}
			\|\nabla U\|_{L^{\infty}(Y^{\reg},\omega_Y,h_Y^k)}\leq k^K \|U\|_{\hilb_k}.
	\end{equation}
\end{lemma}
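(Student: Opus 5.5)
Argue by induction on $r$, iterating Theorem \ref{thm--effective generation} with $\ell_0=k'=(m+2)^{r-1}$ and $\ell_1=m+2$. The aim is a recursion of the form
\[
C_{(m+2)^r}\le c\,(m+2)^{r\cdot \text{const}}\, C_{(m+2)^{r-1}},
\]
where $C_k$ denotes the best constant in $\|\nabla U\|_{L^\infty}\le C_k\|U\|_{\hilb_k}$. The point is that the factor of $A$ produced at each step involves only the fixed exponent $2m+2+\ell_1$, not $k$. Iterating then gives $C_k\le c^r\, C_{(m+2)^2}\prod_j (m+2)^{jK'}$. This is $\le k^{K}$, since $c^r=k^{\log c/\log(m+2)}$ and $\prod_{j\le r}(m+2)^{jK'}=(m+2)^{K'r(r+1)/2}$.

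The last bound is not polynomial in $k$, so the recursion must be sharpened so that the multiplicative loss at each step is a uniform constant $c$, with no $r$-dependence. The base case $r=2$ is just Lemma \ref{lem--exponential}, or the non-effective bound.

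\textbf{Inductive step.} Let $k=(m+2)^r$ and $k'=(m+2)^{r-1}$, so $k=(m+2)k'$. To apply Theorem \ref{thm--effective generation} I need $k\ge (m+2+\ell_1)\ell_0$. With $\ell_0=k'$ this forces $\ell_1=0$, so that choice fails. I instead take $\ell_0=k'$ and $\ell_1=m+2-(m+2)/\!\!\ldots$; this is awkward, so I use $\ell_0=(m+2)^{r-2}$ and $\ell_1=(m+2)^2-(m+2)=(m+1)(m+2)$. Then $(m+2+\ell_1)\ell_0=(m+2)^2\ell_0=k$, and the hypothesis holds.

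Write $U=\sum U(\alpha)\, s_{\alpha_1}\cdots s_{\alpha_{\ell_1}}$, where $s_\alpha$ is an orthonormal basis of $H^0(Y,\ell_0L)$ and $U(\alpha)\in H^0(Y,(m+2)\ell_0L)$. The $\hilb$-norms of the $U(\alpha)$ are bounded by a constant $c_m$ (depending on $m$ and $A$ only) times $\|U\|_{\hilb_k}$. The number of terms is $(N_{\ell_0}+1)^{\ell_1}\sim \ell_0^{m\ell_1}$, which is polynomial in $k$ with a fixed exponent.

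Apply the Leibniz rule to each term. A derivative falling on $U(\alpha)$ costs $C_{(m+2)\ell_0}=C_{k'}$, which is the induction hypothesis. A derivative falling on some $s_{\alpha_i}$ costs $C_{\ell_0}$. All other factors are controlled in $L^\infty$ by Theorem \ref{thm--uniform bound on bergman kernel} together with $\sup\varphi=0$, which gives $|s|_{h_Y}\le A^{1/2}$ and $|U(\alpha)|_{h_Y}\le (A c_m)^{1/2}\|U\|$. This gives
\[
C_k\le c_m' A^{\ell_1/2}\,\ell_0^{m\ell_1}\,\big(C_{k'}+\ell_1 C_{\ell_0}\big).
\]
Since $\ell_1$ is fixed, the prefactor is $\le c\,k^{a}$ with $a=m(m+1)(m+2)$ fixed.

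\textbf{Main obstacle.} Iterating this recursion gives $C_k\lesssim \prod_{j\le r}(c\,(m+2)^{ja})$, which is $k^{O(\log k)}$ rather than polynomial. The counting factor $\ell_0^{m\ell_1}$ has to be removed. I would do this with a Cauchy–Schwarz / Bergman-kernel argument rather than the triangle inequality. Bound the sum over $\alpha$ by
\[
\Big(\sum_\alpha |U(\alpha)|^2\Big)^{1/2}\Big(\sum_\alpha |s_{\alpha_1}\cdots s_{\alpha_{\ell_1}}|^2\Big)^{1/2}.
\]
The second factor is $\rho_{\ell_0}^{\ell_1/2}\le A^{\ell_1/2}$. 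For the first factor, use that $\sum_\alpha\|U(\alpha)\|^2$ is bounded by $c_m\|U\|^2$; this is the actual content of the proof of Theorem \ref{thm--effective generation}, which I would cite in its summed form. Derivatives of $\sum|s_\alpha|^2$-type quantities are handled likewise via $\sum_\alpha|\nabla s_\alpha|^2\le \dim H^0(\ell_0L)\,C_{\ell_0}^2$, which is polynomial in $\ell_0$ with exponent $m$. The recursion then becomes
\[
C_k\le c\,(C_{k'}+\ell_0^{m/2}C_{\ell_0}),
\]
and by induction $C_k\le k^{K}$ for a suitable $K$ (using $c\le (m+2)^{K}$).
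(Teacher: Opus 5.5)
The setup is the same as the paper's: $\ell_0=(m+2)^{r-2}$, $\ell_1=(m+1)(m+2)$, the Leibniz rule, and a two-step recursion in $C_{(m+2)^{r-1}}$ and $C_{(m+2)^{r-2}}$. The Cauchy--Schwarz treatment of the terms where the derivative falls on $U(\alpha)$ is also fine, provided you use the summed form of Theorem \ref{thm--effective generation}, which is what the Skoda-division argument actually gives.

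\textbf{The gap is in your last step.} The recursion $C_k\le c\,(C_{k'}+\ell_0^{m/2}C_{\ell_0})$ does not give a polynomial bound, and your induction does not close. Assume $C_j\le j^K$ for $j<k$. Since $\ell_0=k/(m+2)^2$, the second term is
\[
c\,\ell_0^{m/2+K}=c\,(m+2)^{-m-2K}\,k^{m/2}\,k^K,
\]
and no fixed choice of $K$ absorbs the factor $k^{m/2}$. If you iterate instead, the factor $\ell_0^{m/2}$ picked up at each of the roughly $\log k$ steps multiplies out to $k^{O(\log k)}$. This is exactly the obstacle you identified, only with a smaller exponent. Any per-step loss that grows with $k$ is fatal; the loss must be a uniform constant.

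\textbf{The missing idea} is that at a given point only $m+1$ sections of $H^0(Y,\ell_0L)$ matter. The paper fixes $x\in Y^{\rm reg}$ and takes an orthonormal basis adapted to $x$: $s_0(x)\neq 0$, $s_1,\dots,s_m$ vanish to first order at $x$, and all the others vanish to second order. Then every monomial except reorderings of $s_0^{\ell_1-1}s_j$, $0\le j\le m$, vanishes to second order at $x$, so
\[
\nabla U(x)=s_0^{\ell_1}\nabla U_0+\ell_1U_0s_0^{\ell_1-1}\nabla s_0+\sum_{j=1}^m U_js_0^{\ell_1-1}\nabla s_j .
\]
This involves only $m+1$ coefficients, each with $\hilb$-norm at most $C(m,A)\|U\|_{\hilb_k}$. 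It gives $C_k\le c\,(C_{k'}+C_{\ell_0})$ with $c=c(m,A)$ independent of $k$. The paper iterates this to $C_k\le D^{r-2}B^{(m+2)^2}\le k^K$.

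Within your own Cauchy--Schwarz framework the same fix reads as follows. The quantity $\sum_a|\nabla s_a(x)|^2$ does not depend on the choice of orthonormal basis: it is the squared Hilbert--Schmidt norm of the complex-linear map $s\mapsto\nabla s(x)$. That map takes values in the $(1,0)$-covectors at $x$ with values in the fiber of $\ell_0L$, which is $m$-dimensional. Hence $\sum_a|\nabla s_a(x)|^2\le m\,C_{\ell_0}^2$, not $\dim H^0(Y,\ell_0L)\,C_{\ell_0}^2$, and your recursion then closes.
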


\begin{proof}
	For $r\geq 2$,  we take $\ell_0=(m+2)^{r-2}$ and $\ell_1=m^2+3m+2$ and $k=(m+2)^r$, which satisfies
	\begin{equation}
		k=(m+2+\ell_1)\ell_0.
	\end{equation} Let $s_0,\cdots,s_{N_{\ell_0}}$ be an orthonormal basis of $H^0(Y,\ell_0L)$ with respect to $\hilb_{\ell_0}$. For a given point $x \in Y^{\reg}$, by considering the kernel of the evaluation map at $x$ and the $1$-jet map at $x$, together with their orthogonal complements, we may choose the basis $\{s_i\}$ such that $s_0(x) \neq 0$, the sections $\{s_i \mid 1 \leq i \leq m\}$ vanish at $x$ to first order, and the remaining sections vanish at $x$ to at least second order. We can then apply Theorem \ref{thm--effective generation}, and write
\begin{equation}
U=\sum_{j=0}^mU_js_0^{\ell_1-1}s_j,
\end{equation}
and so
	\begin{equation}\label{eq--gradient}
		\nabla U(x)=\nabla \left(\sum_{j=0}^m U_js_0^{\ell_1-1}s_j\right)(x)
	\end{equation}with
	\begin{equation}\label{eq--estimate for uj}
	\begin{aligned}
				\|U_j\|_{\hilb_{(m+2)^{r-1}}}&\leq \left(\frac{(m^2+4m+2)!}{m!(m^2+3m+2)!} A^{m^2+5m+4}\right)^{\frac12} \|U\|_{\hilb_{(m+2)^r}}\\
				& =:C(m,A)\|U\|_{\hilb_{(m+2)^r}}.
	\end{aligned}
	\end{equation}Expanding \eqref{eq--gradient}, we get that at the point $x$, we have
	\begin{equation}
		\nabla U=s_0^{\ell_1}\nabla U_0+U_0\ell_1s_0^{\ell_1-1}\nabla s_0+\sum_{j=1}^m U_js_0^{\ell_1-1}\nabla s_j.
	\end{equation}For simplicity of notation in the following, we omit the metrics, but it will be understood that all norms are taken with respect to $\omega_Y$ and $h_Y$, unless specified otherwise.
We can estimate
\begin{equation}\label{eq-precise term bound}
	\begin{aligned}
		|\nabla U|(x)&\leq A^{\ell_1}|\nabla U_0|+C(m,A)(\ell_1+1)A^{\ell_1-1}\sum_{j=0}^m|\nabla s_j|\|U_j\|_{\hilb_{(m+2)^r}}.
	\end{aligned}
\end{equation}
Then combining  Lemma \ref{lem--exponential} with \eqref{eq--estimate for uj}, we get that
\begin{equation}
	|\nabla U|(x)\leq DB^{(m+2)^{r-1}}\|U\|_{\hilb_{(m+2)^r}},
\end{equation}where the constant
\begin{equation}
D=D(m,A)=C(m,A)A^{\ell_1}(1+m(\ell_1+1)A^{-1})\geq 1.
\end{equation}

Then we prove by induction on $t$ that for any $1\leq t\leq r-2$ and $U\in H^0(Y, (m+2)^rL)$, we have
\begin{equation}\label{eq--claim}
	\|\nabla U\|_{L^{\infty}(Y^{\reg},\omega_Y,h_Y^{(m+2)^r})}\leq  D^tB^{(m+2)^{r-t}}\|U\|_{\hilb_{(m+2)^r}}.
\end{equation}
The case $t=1$ was just proved by the previous argument. We assume the claim is true for a given $t\geq 1$, and then prove it holds for $t+1$. We consider an $r$ with $r-2\geq t+1$ and $U\in H^0(Y, (m+2)^rL)$. We have \eqref{eq--gradient} and \eqref{eq--estimate for uj} as before with $U_j\in H^0(Y, (m+2)^{r-1}L)$ and with the normalization
\begin{equation}
	\|U\|_{\hilb_{(m+2)^r}}=1.
\end{equation}Since $r-1-2\geq t$, we can apply the inductive assumption to $U_j$ to get
\begin{equation}\label{eq-after induction assumption on uj}
	\|\nabla U_j\|_{L^{\infty}(Y^{\reg},\omega_Y,h_Y^{(m+2)^{r-1}})}\leq D^t B^{(m+2)^{r-1-t}} \|U_j\|_{\hilb_{(m+2)^{r-1}}}.
\end{equation}Similarly we have $s_j\in H^0(Y, (m+2)^{r-2})$, $r-2-2\geq t-1$ and hence by the inductive assumption we obtain that
\begin{equation}\label{eq-after induction assumption on sj}
	\|\nabla s_j\|_{L^{\infty}(Y^{\reg},\omega_Y,h_Y^{(m+2)^{r-2}})}\leq D^{t-1} B^{(m+2)^{r-1-t}} \|s_j\|_{\hilb_{(m+2)^{r-2}}}.
\end{equation}
	Then combining \eqref{eq-precise term bound}, \eqref{eq-after induction assumption on uj} and \eqref{eq-after induction assumption on sj}, we obtain that at any given point $x\in Y^{\reg}$, we have
	\begin{equation}
	\begin{aligned}
		|\nabla U|(x)\leq&  C(m,A)\left(
A^{\ell_1}D^t B^{(m+2)^{r-1-t}}+m(\ell_1+1)A^{\ell_1-1}D^{t-1} B^{(m+2)^{r-1-t}}\right)\\
		\leq & C(m,A)A^{\ell_1}(1+A^{-1}D^{-1}m(\ell_1+1))D^{t}B^{(m+2)^{r-1-t}}\\
		\leq& D^{t+1}B^{(m+2)^{r-1-t}}.
	\end{aligned}
	\end{equation} This proves \eqref{eq--claim}.
	Taking $t=r-2$, we obtain the desired inequality stated in the Lemma.
	\end{proof}

\begin{proof}[Proof of Theorem \ref{thm-holder continuity of potential}] We take $k=(m+2)^r$ for some $r\geq 2$ to be specified below. As a consequence of Lemma \ref{lem--polynomial bound} and the definition of $\varphi_k$, we obtain that
\begin{equation}\label{puerile}
	\|\nabla \varphi_k\|_{L^{\infty}(Y^{\reg})}\leq Ak^{K-1}.
\end{equation}
Using \eqref{eq--C^0-approximation} and \eqref{puerile}, given any two distinct points $x,y\in Y^{\reg}$ we can then estimate
\begin{equation}
\begin{aligned}
	\left|\varphi(x)-\varphi(y)\right| &\leq\left|\varphi(x)-\varphi_k(x)\right|+\left|\varphi_k(x)-\varphi_k(y)\right|+\left|\varphi_k(y)-\varphi(y)\right|\\
	&\leq \frac{2\log A}{k}+A k^{K-1}d_Y(x,y).
\end{aligned}
	\end{equation}Choosing
	\begin{equation}
		r=\left\lfloor\frac{\log \left(d_Y(x,y)^{-\frac{1}{2K}}\right)}{\log(m+2)}\right\rfloor,
	\end{equation}
then gives
	\begin{equation}
		|\varphi(x)-\varphi(y)|\leq C_0\left(d_Y(x,y)^{\frac{1}{2K}}+ d_Y(x,y)^{\frac{1}{2}+\frac{1}{2K}}\right),
	\end{equation}
for some uniform constant $C_0$, which proves \eqref{patetico}.
\end{proof}

\subsection{H\"older bound for $d_{\rm can}$} In this subsection, we use the H\"older bound in Theorem \ref{thm-holder continuity of potential} and the method of Li \cite{Li} to prove a H\"older bound for $d_{\rm can}$ away from the singular locus of $Y$.

Recall that we have an embedding $Y\hookrightarrow\mathbb{P}^N$ and we have defined $\omega_Y:=\omega_{\rm FS}|_Y$, and $d_Y$ is the intrinsic distance function on $Y$ defined by the metric $\omega_Y$. There is also an ``extrinsic'' distance function $d_{\rm ext}$ on $Y$, which is defined by restricting to $Y$ the distance function $d_{\rm FS}$ of the Fubini-Study metric on $\mathbb{P}^N$. Clearly we have
\begin{equation}\label{easy lower bound}
	d_{\rm ext}\leq d_Y.
\end{equation}It follows from \cite[Theorem B, Proposition 1, Chapter II]{stol} that there is $C>0$ such that for all $0<r<1$ and all $x\in Y$ we have
\begin{equation}\label{volume ratio two side bound}
C^{-1}r^{2m}\leq \vol(B^{d_{\rm ext}}(x,r), \omega_Y^m)\leq Cr^{2m}.
\end{equation}
Moreover as a simple consequence of the \L ojasiewicz inequality one has
\begin{equation}
d_Y\leq Cd_{\rm ext}^\alpha,
\end{equation}
for some $C,\alpha>0$, see \cite[Proposition 4.6]{GGZ}. Thanks to this and Theorem \ref{thm-holder continuity of potential}, we see that $\vp$ is also H\"older continuous with respect to $d_{\mathrm{ext}}$, with H\"older exponent that we still denote by $\alpha$ (up to shrinking it).

For any \(r>0\), set
\[
	Y_r:=\{y\in Y \mid d_Y(y,Y\setminus Y^{\reg})>r\}.
\]
Then, for any \(x\in Y_r\) and any \(s\in (0,r/2)\), the \(d_Y\)-ball
\(B^{d_Y}(x,s)\) is contained in \(Y_{r/2}\). Moreover, on \(Y_{r/2}\), this ball
coincides with the geodesic ball defined by the smooth Riemannian metric
\(\omega_Y\) on $Y_{r/2}$. Thus, in what follows, for \(x\in Y_r\) and \(s\in (0,r/2)\), we
continue to write \(B(x,s)\) for \(\omega_Y|_{Y_{r/2}}\)-geodesic ball centered at \(x\)
with radius \(s\).

\begin{theorem}\label{zatta}
	For any $r>0$ there exists $C=C(r)>0$ such that for all $x,y\in Y_r$, we have
	\begin{equation}\label{holder}
		d_{\can}(x,y) \le C\, d_{Y}(x,y)^{\frac{\alpha}{2}},
	\end{equation}
 where $\alpha>0$ is the H\"older continuity index of $\varphi$ with respect to $d_{\rm ext}$.
\end{theorem}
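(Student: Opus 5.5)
We follow Li's method: use the Hölder continuity of $\vp$ to get an averaged $L^1$ bound on $\tr{\omega_Y}{\omega_{\can}}$ over small balls, then convert it into a distance bound by averaging lengths over a family of nearby paths. Throughout, fix $r>0$ and $x,y\in Y_r$ with $s:=d_Y(x,y)$ small; for $s$ bounded below the claim follows from the diameter bound of $(Y,d_{\can})$, which is compact by Theorem \ref{thm--rcd}.

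\textbf{Step 1: averaged trace bound.} We aim to show that for $z\in Y_{r/2}$ and $\rho<r/8$,
\begin{equation}\label{eq-planL1}
\int_{B(z,\rho)}\tr{\omega_Y}{\omega_{\can}}\,\omega_Y^m\le C\rho^{2m-2+\alpha}.
\end{equation}
On $Y_{r/4}$ the metric $\omega_Y$ is smooth with geometry bounded in terms of $r$. Take a cutoff $\chi$ supported in $B(z,2\rho)$, equal to $1$ on $B(z,\rho)$, with $|\ddbar\chi|_{\omega_Y}\le C\rho^{-2}$. Since $\omega_{\can}=\omega_Y+\ddbar\vp$ on $Y^\circ$, we have $\tr{\omega_Y}{\omega_{\can}}\,\omega_Y^m=m\,\omega_{\can}\wedge\omega_Y^{m-1}$. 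Replace $\vp$ by $\vp-\vp(z)$, which is $O(\rho^\alpha)$ on $B(z,2\rho)$ by Theorem \ref{thm-holder continuity of potential}, and integrate by parts:
\begin{equation*}
\int\chi\,\ddbar\vp\wedge\omega_Y^{m-1}=\int(\vp-\vp(z))\,\ddbar\chi\wedge\omega_Y^{m-1}\le C\rho^{\alpha-2}\rho^{2m}.
\end{equation*}
The $\omega_Y$ term contributes only $O(\rho^{2m})$. The integration by parts is legitimate because $\vp$ is a bounded $\omega_Y$-psh function, smooth on $Y^\circ$, and $Y\setminus Y^\circ$ is pluripolar (Bedford--Taylor, the current has no mass there).

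\textbf{Step 2: from averages to distance (Li's argument).} Work in a chart around $x$ that is $C(r)$-bi-Lipschitz to a Euclidean ball, and let $\gamma$ be the straight segment from $x$ to $y$, of length about $s$. Pick $\rho=s^{\beta}$ with $\beta\in(0,1]$ to be optimized. Consider the family of piecewise linear paths $x\to x'\to y'\to y$ with $x'\in B(x,\rho)$ and $y'=x'+(y-x)$. Average the $\omega_{\can}$-length over $x'$, using $|\dot\gamma|_{\omega_{\can}}\le|\dot\gamma|_{\omega_Y}(\tr{\omega_Y}{\omega_{\can}})^{1/2}$ and Cauchy--Schwarz.

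For the middle segments, parallel translates of $\gamma$ sweep a tube of volume about $s\rho^{2m-1}$, contained in a union of about $s/\rho+1$ balls of radius $2\rho$. By \eqref{eq-planL1} the averaged length is bounded by
\begin{equation*}
s^{1/2}\left(\rho^{-(2m-1)}\,(s/\rho+1)\,\rho^{2m-2+\alpha}\right)^{1/2}.
\end{equation*}
The radial segments from $x$ to $x'$ are handled with polar coordinates and a dyadic decomposition of $B(x,\rho)$, giving a term of order $\rho^{\alpha/2}$. This is Li's key trick: for a radial segment the averaged integral of $(\tr{\omega_Y}{\omega_{\can}})^{1/2}$ is controlled by $\sum_j 2^{-j}\rho\,(2^{-j}\rho)^{(\alpha-2)/2}$, which converges since $\alpha>0$.

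Since $\tr{\omega_Y}{\omega_{\can}}$ could be infinite on $Y\setminus Y^\circ$, we use that this set has measure zero and, by Fubini, almost every path meets it in a null set. Then $d_{\can}$, being the length metric of the completion, is bounded by the $\omega_{\can}$-length of a.e. path, approximated by paths inside $Y^\circ$. Choosing $\beta=1$, i.e. $\rho\sim s$, gives $d_{\can}(x,y)\le C s^{\alpha/2}$.

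\textbf{Main obstacle.} The hardest part is Step 2. One must make sure the radial contributions near the endpoints really give $\rho^{\alpha/2}$ rather than something worse. One must also justify that lengths of paths passing through $D\cap Y^{\reg}$ bound $d_{\can}$. For the latter, the plan is to perturb paths slightly, using continuity of $d_{\can}$ on $Y$ (it is homeomorphic to $Y$) together with the codimension estimate of Theorem \ref{thm--rcd}.
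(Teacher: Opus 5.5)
Your argument is correct in substance, but Step 2 takes a different route from the paper; Step 1 is the paper's estimate \eqref{zatan}. The paper never builds curves. It applies a Morrey-type argument to the function $u=d_{\can}(x_0,\cdot)$, which satisfies $|\nabla u|^2_{g_Y}\le \tr{\omega_Y}{\omega_{\can}}$. Combining \eqref{zatan} with the Sobolev--Poincar\'e inequality \eqref{sobo} on $(Y_{r/2},\omega_Y)$ gives the Campanato bound $|u_{x,s}-u_{x,2s}|\le Cs^{\alpha/2}$. The paper then telescopes over dyadic radii and compares averages on overlapping balls.

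You instead prove the Morrey estimate by hand, for lengths. You average the $\omega_{\can}$-length over the broken segments $x\to x'\to x'+(y-x)\to y$. The middle pieces cost $s^{1/2}(s+\rho)^{1/2}\rho^{\alpha/2-1}$. The radial pieces reduce to the Riesz-potential bound $\int_{B(x,\rho)}(\tr{\omega_Y}{\omega_{\can}})^{1/2}|w-x|^{1-2m}\le C\rho^{\alpha/2}$ via the dyadic sum. Your arithmetic checks out, and $\rho\sim s$ gives $s^{\alpha/2}$.

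The two arguments share the same analytic core, since the Euclidean Poincar\'e inequality is itself proved by averaging over segments. The packaging differs, though.
\begin{itemize}
\item The paper uses a Poincar\'e inequality for $\omega_Y$ as a black box. This is exactly the ingredient whose absence near $Y\setminus Y^{\reg}$ blocks a global bound (see the remark after the proof). The paper also tacitly uses that $u$ is a Sobolev function across $D\cap Y^{\reg}$; this holds because $u$ is continuous and $D$ has real codimension $2$.
\item Your version produces an explicit competitor curve and needs no Sobolev theory. It does rely on the affine structure of a chart (translations and rays), so it is equally confined to $Y_r$.
\end{itemize}

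One point needs tightening: the passage through $D\cap Y^{\reg}$. The claim that almost every path meets $Y\setminus Y^\circ$ in a null set is not enough. The function $t\mapsto d_{\can}(x,\gamma(t))$ is only known to be continuous. A Cantor-type null set of times spent in $D$ could therefore carry $d_{\can}$-length that $\int_{\gamma^{-1}(Y^\circ)}|\dot\gamma|_{g_{\can}}$ does not see. Perturbing paths does not fix this either, because $\tr{\omega_Y}{\omega_{\can}}$ is unbounded near $D$.

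What is true, and suffices, is that almost every path in your family misses $D$ entirely except possibly at $x$ and $y$. In the chart, $D\cap Y^{\reg}$ is an analytic set of real dimension at most $2m-2$.
\begin{itemize}
\item The set of $x'$ whose translated segment meets $D$ is a Lipschitz image of $D\times[0,1]$. It has Hausdorff dimension at most $2m-1$, so it is Lebesgue-null.
\item On each dyadic annulus around $x$ (or $y$), radial projection of $D$ to $S^{2m-1}$ is Lipschitz. Hence the set of directions whose ray meets $D$ away from the centre is null.
\end{itemize}
For such $x'$, the three pieces lie in $Y^\circ$ away from $x$ and $y$. So $d_{\can}$ is bounded by their $g_{\can}$-lengths. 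The endpoints are then handled by continuity of $d_{\can}$ in the topology of $Y$ (Theorem \ref{thm--rcd}).
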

\begin{proof}
This is a direct modification of the argument of Li \cite[Theorem 4.1]{Li}. For the reader's convenience, we provide the details.

Since $(Y_{r/2},\omega_Y)$ is a smooth manifold with bounded geometry (depending on $r$), we can find constants $C=C(r)>0, s_0=s_0(r)\in (0,\frac{r}{2})$ such that the volume bound
\begin{equation}\label{satana}
C^{-1}s^{2m}\leq \vol(B(x,s), \omega_Y^m)\leq Cs^{2m},
\end{equation}
and the Sobolev-Poincar\'e inequality
\begin{equation}\label{sobo}
	\left(\int_{B(x,s)}|u-u_{x,s}|^{\frac{2m}{2m-1}}\omega_Y^m\right)^{\frac{2m-1}{2m}}\leq C\int_{B(x,Cs)}|\nabla u|_{g_Y}\omega_Y^{m},
\end{equation}
hold for all $x\in Y_r$ and $0<s<s_0$, where
\begin{equation}
u_{x,s}:=\frac{1}{\vol(B(x,s), \omega_Y^m)}\int_{B(x,s)}u\omega_Y^m.
\end{equation}

For each fixed $x_0\in Y_r$, we denote $u(y):=d_{\rm can}(x_0,y)$ so that
	\begin{equation}\label{satan}
		|\nabla u|^2_{g_Y}\leq \tr{\omega_Y}{\omega_{\rm can}}.
	\end{equation}
Then, for $x\in Y_r$ and $s<\frac{s_0}{2}$, restricting to $Y$ a cutoff function on $\mathbb{P}^N$, we obtain a smooth function $\chi$ on $Y$ which is supported on $B^{d_{\rm ext}}(x,2s)$ and identically equal to $1$
on $B^{d_{\rm ext}}(x,s)$, such that
\begin{equation}\label{satanasso}
	\ii\de\db \chi\leq \frac{10}{s^2}\omega_Y.
\end{equation}
Recalling that $\omega_{\rm can}=\omega_Y+\ddbar\vp$, integrating by parts, and using \eqref{easy lower bound}, \eqref{volume ratio two side bound},\eqref{satana}, \eqref{satan},  \eqref{satanasso} and the $d_{\rm ext}$-H\"older bound for $\vp$, we get that
\begin{equation}\label{zatan}
	\begin{aligned}
\int_{B(x,s)} |\nabla u|^2_{g_Y}\omega_Y^m& \leq \int_{B(x,s)}\left(\tr{\omega_Y} \omega_{\rm can}\right) \omega_Y^m\leq m \int_{B^{d_{\rm ext}}(x,s)} \omega_{\rm can}\wedge \omega_Y^{m-1}\\
& \leq m \int_{Y} \chi \omega_{\rm can} \wedge\omega_Y^{m-1} \\
& \leq m \vol(B^{d_{\rm ext}}(x,2s), \omega_Y^m)+m \int_Y \chi \ii\de\db \varphi\wedge \omega_Y^{m-1} \\
& =C s^{2 m}+m \int_Y\left(\varphi-\inf _{B^{d_{\rm ext}}(x,2 s)} \varphi\right) \ii\de\db \chi \wedge \omega_Y^{m-1} \\
& \leq C s^{2 m}+C\left(\operatorname{osc}_{B^{d_{\rm ext}}(x,2 s)} \varphi\right) s^{-2}\vol(B(x,2s), \omega_Y^m)\\
& \leq C s^{2 m-2+\alpha}.
\end{aligned}
\end{equation}
Using \eqref{satana}, \eqref{zatan} and \eqref{sobo}, we can estimate
\begin{equation}\label{buddha}\begin{aligned}
|u_{x,s}-u_{x,2s}|&\leq \frac{1}{\vol(B(x,s), \omega_Y^m)}\int_{B(x,s)}|u-u_{x,2s}|\omega_Y^m\\
&\leq \frac{1}{\vol(B(x,s), \omega_Y^m)}\int_{B(x,2s)}|u-u_{x,2s}|\omega_Y^m\\
&\leq \frac{\vol(B(x,2s), \omega_Y^m)^{\frac{1}{2m}}}{\vol(B(x,s), \omega_Y^m)}\left(\int_{B(x,2s)}|u-u_{x,2s}|^{\frac{2m}{2m-1}}\omega_Y^m\right)^{\frac{2m-1}{2m}}\\
&\leq Cs^{1-2m}\int_{B(x,Cs)}|\nabla u|_{g_Y}\omega_Y^{m}\\
&\leq Cs^{1-m}\left(\int_{B(x,Cs)}|\nabla u|^2_{g_Y}\omega_Y^{m}\right)^{\frac{1}{2}}\leq Cs^{\frac{\alpha}{2}}.
\end{aligned}\end{equation}
For $i\geq 1$ let $u_i:=u_{x,2^{-i}s}$, so that \eqref{buddha} gives
\begin{equation}
|u_{i+1}-u_i|\leq Cs^{\alpha/2} 2^{-i\alpha/2},
\end{equation}
and since $\lim_{i\to\infty}u_i=u(x)$ (using that $u$ is continuous and $x\in Y^{\rm reg}$), we obtain
\begin{equation}
\begin{split}\label{eq--holder with average}
|u(x)-u_{x,s}|&\leq \sum_{i=1}^\infty |u_{i+1}-u_i|\\
&\leq Cs^{\alpha/2} \sum_{i=1}^\infty 2^{-i\alpha/2}=C's^{\alpha/2}.
\end{split}
\end{equation}

In order to prove the H\"older bound, since \(d_{\can}\) has bounded diameter and we allow the constant to depend on \(r\), it suffices to consider the case where \(x,y\in Y_r\),
\[
	d_Y(x,y)=:s/2\in (0,s_0/2),
\]
By \eqref{eq--holder with average},
we then obtain
\begin{equation}
\begin{split}
|u(x)-u(y)|&\leq |u(x)-u_{x,s}|+|u_{y,s}-u(y)|+|u_{x,s}-u_{y,s}|\\
&\leq Cs^{\alpha/2}+|u_{x,s}-u_{y,s}|,
\end{split}
\end{equation}
and to estimate the last term observe that $B(x,s/2)\subset B(x,s)\cap B(y,s)$, and use \eqref{satana} and \eqref{buddha} again to estimate
\begin{equation}
\begin{split}
|u_{x,s}-u_{y,s}|&\leq Cs^{-2m}\int_{B(x,s/2)}|u-u_{x,s}|\omega_Y^{m}+Cs^{-2m}\int_{B(x,s/2)}|u-u_{y,s}|\omega_Y^{m}\\
&\leq Cs^{-2m}\int_{B(x,s)}|u-u_{x,s}|\omega_Y^{m}+Cs^{-2m}\int_{B(y,s)}|u-u_{y,s}|\omega_Y^{m}\\
&\leq Cs^{\alpha/2},
\end{split}
\end{equation}
which shows that $|u(x)-u(y)|\leq Cd_{Y}(x,y)^{\alpha/2}$ for all $x,y \in Y_r$, and since $u(x_0)=0$, this proves \eqref{holder}.
\end{proof}

\begin{rmk}
In a previous version of this paper, we claimed a global H\"older bound for
\(d_{\can}\) on all of $Y$. However, as pointed out to us by Jian Song and Jacob Sturm, the proof contained a gap:
it relied on a Poincar\'e inequality for \(\omega_Y\) which does not appear to be currently
known. Our argument in Section \ref{sec--main} shows that this global H\"older bound for
\(d_{\can}\) is not needed for the main results of the paper. Nevertheless, it
remains an interesting question whether the required Poincar\'e inequality
holds, and whether the global H\"older bound for $d_{\rm can}$ can be established.
\end{rmk}

\section{Reduction to the key estimate}\label{sec--reduction}
In this section, we reduce the proof of Theorem \ref{main} to establishing a key estimate, stated in Proposition \ref{prop--main to prove}, which is a sharp upper bound for $d_{\can}$ in terms of Perelman's $\mathcal{L}$-distance. This reduction follows closely the arguments in \cite{LT}, with some caveats due to the fact that in \cite{LT} the space $Y$ was assumed to be smooth, while here we allow it to be singular. The proof of Proposition \ref{prop--main to prove} will then be given in the next section.

\subsection{Reduction to an upper for $d_{\can}$ by $d_t$}In the following, let $d_t$ denote the distance function on $X$ defined by the metric $\omega(t)$. To prove Theorem \ref{main} we need to show that $(X,d_t)$ converges in the Gromov-Hausdorff topology to $(Y,d_{\rm can})$. Recall that we have the analytic subvariety $D\subset Y$ so that $Y^\circ=Y\backslash D$ is smooth and $f$ is a submersion over $Y^\circ$. Given $\ve>0$ we define
\begin{equation}
	V_{\epsilon}:=\{y\in Y\mid d_{\can}(D,y)<\epsilon\}, \quad \tilde V_{\epsilon}:=f^{-1}(V_{\epsilon}).
\end{equation}and let $(Y\setminus V_{\epsilon},d_{\can})$ denote the restriction of the metric $d_{\can}$ from $Y$ to $Y\setminus V_{\epsilon}$, and similarly let $(X\setminus \tilde V_{\epsilon},d_t)$ be the restriction of $d_t$ from $X$ to $X\setminus \tilde V_{\epsilon}$.

We begin with the following simple observation:
\begin{lemma}\label{lem--gh close for smooth exhaustion}
	We have
	\begin{equation}\label{eq--gh close for Y}
		d_{\gh}((Y,d_{\can}),(Y\setminus V_{\epsilon},d_{\can}))=\Psi(\epsilon).
	\end{equation}
	\begin{equation}\label{eq--gh close for X}
		d_{\gh}((X,d_t),(X\setminus \tilde V_{\epsilon},d_t))=\Psi(\epsilon, t^{-1}).
	\end{equation}
\end{lemma}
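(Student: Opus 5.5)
The plan is to use the elementary fact that if $A\subset Z$ is a subset of a compact metric space, with the restricted metric, then $d_{\gh}((Z,d),(A,d))$ is at most the Hausdorff distance between $A$ and $Z$. So in both cases it suffices to show that the complement of the bad neighborhood is $\Psi$-dense. Density will come from a volume argument: a ball of radius $r$ has volume bounded below, while the neighborhood $V_\epsilon$ (resp. $\tilde V_\epsilon$) has very small volume, so the ball must leave it.

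\emph{The space $Y$.} Fix $y\in V_\epsilon$ and $r\in(0,1)$. By \eqref{kazzo2} we have $\mathcal H^{2m}(B^{d_{\can}}(y,r))\geq \kappa r^{2m}$, and by \eqref{kazzo} we have $\mathcal H^{2m}(V_\epsilon)=:\eta(\epsilon)\to 0$, since $\bigcap_\epsilon V_\epsilon=D$. Choose $r(\epsilon)=(2\eta(\epsilon)/\kappa)^{1/(2m)}+\epsilon$. Then $\kappa r^{2m}>\eta(\epsilon)$, so the ball meets $Y\setminus V_\epsilon$. Hence $Y\setminus V_\epsilon$ is $r(\epsilon)$-dense, which gives \eqref{eq--gh close for Y}.

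\emph{The space $X$.} By the non-collapsing estimate \eqref{thm--volume non-collapsing}, every $d_t$-ball of radius $r$ has relative volume at least $F(r)>0$. It therefore suffices to show
\begin{equation*}
\frac{\vol_{\omega(t)}(\tilde V_\epsilon)}{\vol(X,\omega(t))}\leq \Psi(\epsilon)+\Psi(t^{-1}|\epsilon).
\end{equation*}
Once this holds, any $r$ with $F(r)$ exceeding the right-hand side forces every $r$-ball to meet $X\setminus\tilde V_\epsilon$. Since $F$ is positive for $r>0$, such an $r$ can be taken to be $\Psi(\epsilon,t^{-1})$.

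To bound the relative volume, let $K:=X\setminus\tilde V_\epsilon=f^{-1}(Y\setminus V_\epsilon)$. This set is compact and contained in $X^\circ$. By \eqref{twyconv}, on $K$ we have
\begin{equation*}
\omega(t)^n=(1+\Psi(t^{-1}|\epsilon))\tbinom{n}{m}e^{-(n-m)t}f^*\omega_{\can}^m\wedge\omega_{\rm SRF}^{n-m},
\end{equation*}
the remaining terms in the expansion of $\hat\omega_t^n$ being of higher order in $e^{-t}$. The fiber integral of $\omega_{\rm SRF}^{n-m}$ is a constant $c_0>0$, because the cohomology class of $\omega_{\rm SRF}$ on the fibers is fixed. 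Hence
\begin{equation*}
e^{(n-m)t}\vol_{\omega(t)}(K)\to \tbinom{n}{m}c_0\int_{Y\setminus V_\epsilon}\omega_{\can}^m.
\end{equation*}
On the other hand, $e^{(n-m)t}\vol(X,\omega(t))$ is computed cohomologically from $[\omega(t)]=e^{-t}[\omega_0]+(1-e^{-t})2\pi c_1(K_X)$. Its limit is $\binom{n}{m}\int_X (2\pi c_1(K_X))^m\wedge[\omega_0]^{n-m}$, which equals $\binom nm c_0\int_{Y^\circ}\omega_{\can}^m$ by \eqref{makrf} and the fact that $f^{-1}(D)$ has measure zero. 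Taking the ratio,
\begin{equation*}
\frac{\vol_{\omega(t)}(\tilde V_\epsilon)}{\vol(X,\omega(t))}=1-\frac{\vol_{\omega(t)}(K)}{\vol(X,\omega(t))}\to\frac{\int_{V_\epsilon\cap Y^\circ}\omega_{\can}^m}{\int_{Y^\circ}\omega_{\can}^m}.
\end{equation*}
The right-hand side is a multiple of $\mathcal H^{2m}(V_\epsilon)$, since $\omega_{\can}^m/m!$ is the Riemannian volume on $Y^\circ$ and $D$ is $\mathcal H^{2m}$-null by Theorem \ref{thm--rcd}. By \eqref{kazzo} it is $\Psi(\epsilon)$.

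\emph{Main obstacle.} The delicate step is matching the normalizing constants: identifying the leading coefficient of $\vol(X,\omega(t))$ with the integral of the local model over $Y^\circ$. One must check that no mass of the total volume concentrates near $f^{-1}(D)$ beyond what $\mathcal H^{2m}(V_\epsilon)$ accounts for. I would handle this by the cohomological computation above together with the identity $\int_{Y}\omega_{\can}^m=\int_Y\omega_Y^m$ (up to the factor $\ell$ from $[f^*\omega_Y]=\ell c_1(K_X)$), which holds because $\vp$ is bounded and the Monge--Amp\`ere measure puts no mass on $D$.
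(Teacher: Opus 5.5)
Your argument for $(Y,d_{\can})$ is the same as the paper's: density of $Y\setminus V_\epsilon$ from \eqref{kazzo2} and \eqref{kazzo}. Your argument for $X$ is correct, but it reaches the relative volume bound by a genuinely different route.

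The paper uses the global two-sided volume form estimate \eqref{volscal}. This makes $\omega(t)^n/\vol(X,\omega(t)^n)$ uniformly comparable to $\omega_0^n\le C\mathcal M$, and the paper then pushes forward:
\[
\frac{\vol(\tilde V_\epsilon,\omega(t)^n)}{\vol(X,\omega(t)^n)}\le C\int_{\tilde V_\epsilon}\mathcal M=C\int_{V_\epsilon}f_*\mathcal M\le C\int_{V_\epsilon}\omega_{\can}^m .
\]
This uses only the boundedness of $\vp$ in \eqref{makrf}. It is a one-line estimate, needs no bookkeeping of normalizing constants, and is uniform in $t$.

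You instead use:
\begin{itemize}
\item the local convergence \eqref{twyconv} on the compact set $K=f^{-1}(Y\setminus V_\epsilon)\subset X^\circ$;
\item constancy of the fiber volume;
\item the cohomological expansion of $\vol(X,\omega(t))$;
\item the full-mass property of the Monge--Amp\`ere measure of the bounded potential $\vp$.
\end{itemize}
As you note at the end, it is this full-mass property, together with matching $f^*[\omega_{\can}]$ with $\lim_t[\omega(t)]$, that really identifies the leading coefficients; ``by \eqref{makrf}'' alone does not. Your route avoids \eqref{volscal} and yields the exact limiting ratio $\int_{V_\epsilon\cap Y^\circ}\omega_{\can}^m\big/\int_{Y^\circ}\omega_{\can}^m$. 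However, it only does so for $t\geq T(\epsilon)$.

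Consequently your bound has the form $\Psi(\epsilon)+\Psi(t^{-1}|\epsilon)$. It is not literally a joint $\Psi(\epsilon,t^{-1})$, as you assert in your last step. The upgrade is immediate from monotonicity: since $\tilde V_{\epsilon'}\subset\tilde V_\epsilon$ for $\epsilon'\leq\epsilon$, the relative volume and the density radius for $\epsilon'$ are controlled by those for $\epsilon$. You should state this explicitly. In any case, the weaker form already suffices for Lemma \ref{lem--dcan bound by dt}, where $\epsilon$ is fixed before $t\to\infty$.
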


\begin{proof}
	We want to show the natural inclusion map gives the desired Gromov-Hausdorff approximations.	
	For \eqref{eq--gh close for Y}, it is enough to show that $Y\setminus V_{\epsilon}$ is $\Psi(\epsilon)$-dense inside $(Y,d_{\can})$, i.e. 	\begin{equation}
		d_{\can}(y, \partial V_{\epsilon})=\Psi(\epsilon), \text{ for any } y\in V_\epsilon.
	\end{equation}This follows from the volume non-collapsing of $d_{\can}$ proved in \eqref{kazzo2}, and the fact that the volume of $V_{\epsilon}$ goes to 0, see \eqref{kazzo}.

	For \eqref{eq--gh close for X}, it is enough to show that $X\setminus \tilde V_{\epsilon}$ is $\Psi(\epsilon,t^{-1})$-dense inside $(X,d_t)$, i.e. 	\begin{equation}\label{close to boundary}
		d_{t}(x, \partial \tilde V_{\epsilon})=\Psi(\epsilon,t^{-1}), \text{ for any } x\in \tilde V_\epsilon.
	\end{equation}
Using \eqref{volscal}, \eqref{makrf} (and the boundedness of the function $\vp$ that appears there) we can then estimate
	\begin{equation}\label{fut}
		\frac{\vol(\tilde V_{\epsilon},\omega(t)^n)}{\vol(X,\omega(t)^n)}\leq C\int_{\ti{V}_\ve}\omega_0^n\leq C\int_{\ti{V}_\ve}\mathcal{M}=C\int_{V_\ve}f_*(\mathcal{M})\leq
C\int_{V_\ve}\omega_{\can}^m,
	\end{equation}
for a constant $C$ independent of $t$ and $\epsilon$.
	Then \eqref{close to boundary} follows from this volume upper bound for $\tilde V_{\epsilon}$ together with the volume non-collapsing estimate for $\omega(t)$ in \eqref{thm--volume non-collapsing}.
\end{proof}

The next lemma is also elementary:
\begin{lemma}\label{lem--convexity on y}
	Given $\epsilon,\delta>0$, there exists $\epsilon'=\epsilon'(\epsilon,\delta)$ such that for any $x, y\in Y\setminus V_{\epsilon}$, there exists a smooth curve $\gamma$ contained in $Y\setminus V_{\epsilon'}$, connecting $x$ and $y$ such that
	\begin{equation}
		\mathrm{length}_{\omega_{\can}}(\gamma)\leq d_{\can}(x,y)+\delta.
	\end{equation}
\end{lemma}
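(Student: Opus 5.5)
We argue by contradiction and compactness; the structure of $Y^\circ$ enters only through the density of $Y^\circ$, the length-space property of $(Y,d_{\can})$, and the Hausdorff-codimension bound of Theorem \ref{thm--rcd}.

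First, $(Y,d_{\can})$ is the metric completion of the Riemannian manifold $(Y^\circ,\omega_{\can})$, so for $x,y\in Y^\circ$ the quantity $d_{\can}(x,y)$ is approximated by lengths of curves in $Y$. We would like these curves to lie in $Y^\circ$, so we first show that $d_{\can}|_{Y^\circ}$ equals the intrinsic Riemannian distance of $(Y^\circ,\omega_{\can})$. This is in fact how the completion is defined: $d_{\can}$ restricted to $Y^\circ$ is the length metric of $\omega_{\can}$ on $Y^\circ$. Consequently, for $x,y\in Y^\circ$ there is a smooth curve $\gamma\subset Y^\circ$ from $x$ to $y$ with $\mathrm{length}_{\omega_{\can}}(\gamma)\le d_{\can}(x,y)+\delta/2$.

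If the points are only in $Y\setminus V_\epsilon$, they already lie in $Y^\circ$, since $D\subset V_\epsilon$. So the curve $\gamma$ exists for every pair $x,y\in Y\setminus V_\epsilon$. The issue is uniformity: we need a fixed $\epsilon'$ such that $\gamma\cap V_{\epsilon'}=\emptyset$ for all such pairs.

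We prove uniformity by contradiction.

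1. Suppose no $\epsilon'$ works. Then there are pairs $x_i,y_i\in Y\setminus V_\epsilon$ such that every curve from $x_i$ to $y_i$ of length $\le d_{\can}(x_i,y_i)+\delta$ enters $V_{1/i}$.

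2. The set $Y\setminus V_\epsilon$ is compact. This holds because $(Y,d_{\can})$ is compact and $V_\epsilon$ is open. Passing to a subsequence, $x_i\to x$ and $y_i\to y$ with $x,y\in Y\setminus V_\epsilon\subset Y^\circ$.

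3. Fix a curve $\gamma\subset Y^\circ$ from $x$ to $y$ of length $\le d_{\can}(x,y)+\delta/4$. Being a compact subset of $Y^\circ$, $\gamma$ stays at distance $\ge 2\eta>0$ from $D$.

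4. Now build a comparison curve for $(x_i,y_i)$. Because $x$ has distance $\ge\epsilon$ from $D$, the $\omega_{\can}$-ball around $x$ of radius $\epsilon/2$ is a smooth Riemannian ball in $Y^\circ$. For $i$ large, take a short $\omega_{\can}$-geodesic inside that ball joining $x_i$ to $x$; do the same at $y$. Concatenate these with $\gamma$ and smooth the corners.

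5. The resulting curve has length at most $d_{\can}(x,y)+\delta/4+o(1)$. By the triangle inequality this is at most $d_{\can}(x_i,y_i)+\delta/2+o(1)$. It avoids $V_{\min(\eta,\epsilon/2)}$, which contradicts step 1 once $1/i<\min(\eta,\epsilon/2)$.

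The step needing most care is step 4, the claim that the $d_{\can}$-ball of radius $\epsilon/2$ about a point at $d_{\can}$-distance $\ge\epsilon$ from $D$ lies in $Y^\circ$ and that $d_{\can}$ agrees with the local Riemannian distance there. The containment $B^{d_{\can}}(x,\epsilon/2)\subset Y\setminus V_{\epsilon/2}\subset Y^\circ$ is immediate from the triangle inequality. The agreement of distances follows from the completion definition: $d_{\can}(x_i,x)$ is an infimum of lengths of curves in $Y^\circ$. Any such curve of length $<\epsilon/2$ stays inside this ball, so the infimum equals the local Riemannian distance.

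With this in place, no RCD input is needed beyond compactness of $(Y,d_{\can})$, which comes from Theorem \ref{thm--rcd}.
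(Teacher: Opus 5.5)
Your argument is correct and is essentially the paper's. The paper fixes a radius $\delta'$ and, for each pair $(x,y)$, finds an $\epsilon'(x,y)$ that works for all pairs in the product of $\delta'$-balls by attaching short curves near the endpoints, which is your Step 4; it then takes the minimum over a finite subcover of the compact set $\overline{Y\setminus V_\epsilon}\times\overline{Y\setminus V_\epsilon}$, the open-cover form of your sequential contradiction argument. Your write-up is more explicit than the paper's about why the short connecting curves stay away from $D$, and, as your last line says, the Hausdorff-codimension bound cited in your opening sentence is never actually used.
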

\begin{proof}
		Given $\delta>0$, by the definition of metric completion, we know that there exists $\delta'>0$ such that given points $x, y\in Y\setminus V_{\epsilon/2}$, there exists $\epsilon'=\epsilon'(x,y)>0$ such that for any points $x'\in B^{d_{\can}}(x,\delta')$ and $y'\in B^{d_{\can}}(y,\delta')$, there exists a smooth curve $\gamma_{x',y'}$ contained in $Y\setminus V_{\epsilon'}$, connecting $x'$ and $y'$ such that
		\begin{equation}
			\mathrm{length}(\gamma_{x',y'})\leq d_{\can}(x',y')+\delta
		\end{equation}Since $\overline{Y\setminus V_{\epsilon}}\times \overline{Y\setminus V_{\epsilon}}$ is compact, we can cover it by finitely many balls $B^{d_{\can}}(x_i,\delta')\times B^{d_{\can}}(y_i,\delta')$ for $i=1,\cdots, N$. We then define
\begin{equation}
    \epsilon' := \min \{ \epsilon(x_i,y_i) \mid i = 1,\dots,N \}.
\end{equation}
It is clear that this choice satisfies the desired property.\end{proof}

Next, we show that the Theorem \ref{main} can be reduced to proving an upper bound of the $d_{\can}$-distance.

\begin{lemma}\label{lem--dcan bound by dt}
	Suppose the following is true: given any small $\ve>0$ and two points $p, q\in X\setminus \tilde V_{\epsilon}$, we have
	\begin{equation}\label{eq--to prove}
		d_{\can}(f(p),f(q))\leq d_t(p,q)+\Psi(t^{-1}|\epsilon).
\end{equation}
In this case, we then have
\begin{equation}\label{agogna}
	\lim_{t\rightarrow\infty}d_{\gh}((X,d_t),(Y,d_{\can}))=0
\end{equation}
\end{lemma}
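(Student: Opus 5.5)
The plan is to build, for each small $\epsilon>0$ and large $t$, an explicit $\Psi(\epsilon,t^{-1})$-Gromov-Hausdorff approximation between $(X\setminus\tilde V_\epsilon,d_t)$ and $(Y\setminus V_\epsilon,d_{\can})$. Lemma \ref{lem--gh close for smooth exhaustion} then lets us pass to $(X,d_t)$ and $(Y,d_{\can})$ at the cost of another $\Psi(\epsilon,t^{-1})$. Since $\epsilon$ is arbitrary, letting first $t\to\infty$ and then $\epsilon\to0$ gives \eqref{agogna}.

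The natural map is $f:X\setminus\tilde V_\epsilon\to Y\setminus V_\epsilon$, which is surjective by definition of $\tilde V_\epsilon=f^{-1}(V_\epsilon)$. It therefore suffices to show
$$|d_{\can}(f(p),f(q))-d_t(p,q)|\le\Psi(t^{-1}|\epsilon)$$
for all $p,q\in X\setminus\tilde V_\epsilon$.

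The lower bound for $d_t$, i.e. $d_{\can}(f(p),f(q))\le d_t(p,q)+\Psi$, is exactly the hypothesis \eqref{eq--to prove}.

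For the upper bound we need $d_t(p,q)\le d_{\can}(f(p),f(q))+\Psi(t^{-1}|\epsilon)$.
\begin{itemize}
\item Fix $\delta>0$. Lemma \ref{lem--convexity on y} gives a smooth curve $\gamma\subset Y\setminus V_{\epsilon'}$, with $\epsilon'=\epsilon'(\epsilon,\delta)$, joining $f(p)$ to $f(q)$ and satisfying $\mathrm{length}_{\omega_{\can}}(\gamma)\le d_{\can}(f(p),f(q))+\delta$.
\item Lift $\gamma$ to a curve $\tilde\gamma$ in $K:=X\setminus\tilde V_{\epsilon'}$. This is a compact subset of $X^\circ$ (it is closed and $\tilde V_{\epsilon'}\supset f^{-1}(D)$). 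Since $f$ is a submersion there, we can take $\tilde\gamma$ to be horizontal with respect to $\omega_{\rm SRF}$ (or any fixed smooth connection on $K$), starting at $p$ and ending at some $q'$ in the fiber $f^{-1}(f(q))$.
\item By \eqref{twyconv} on $K$,
$$\mathrm{length}_{\omega(t)}(\tilde\gamma)\le(1+\Psi(t^{-1}|\epsilon'))\,\mathrm{length}_{\hat\omega_t}(\tilde\gamma).$$
Moreover $\hat\omega_t\le f^*\omega_{\can}+Ce^{-t}\omega_0$ on $K$, so $\mathrm{length}_{\hat\omega_t}(\tilde\gamma)\le\mathrm{length}_{\omega_{\can}}(\gamma)+C(\epsilon')e^{-t/2}$. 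Here the lengths of the horizontal lifts are uniformly bounded in $\omega_0$, because $\mathrm{length}_{\omega_Y}(\gamma)$ is controlled: $\omega_{\can}\ge C(\epsilon')^{-1}\omega_Y$ on the compact set $Y\setminus V_{\epsilon'}$.
\item Join $q'$ to $q$ inside the fiber $f^{-1}(f(q))\subset K$. With respect to $\omega(t)$ this costs at most $(1+\Psi)\,e^{-t/2}\,\diam(f^{-1}(f(q)),\omega_{\rm SRF})\le C(\epsilon')e^{-t/2}$, again by \eqref{twyconv}; fiber diameters are uniformly bounded on $K$ by compactness.
\end{itemize}
Summing up,
$$d_t(p,q)\le d_{\can}(f(p),f(q))+\delta+\Psi(t^{-1}|\epsilon,\delta).$$
Since $\delta$ is arbitrary, a diagonal choice of $\delta\to0$ as $t\to\infty$ yields $\Psi(t^{-1}|\epsilon)$.

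The main obstacle is the uniformity in this upper bound. The constants in \eqref{twyconv} and the lift lengths depend on $\epsilon'$, which depends on $\delta$, so the order of limits must be handled carefully: fix $\epsilon$, choose $\delta$, then let $t\to\infty$. A second point to check is that the error produced by Lemma \ref{lem--gh close for smooth exhaustion} is genuinely $\Psi(\epsilon,t^{-1})$, uniform as $t\to\infty$. Granting this, the triangle inequality for $d_{\gh}$ gives
$$\limsup_{t\to\infty}d_{\gh}((X,d_t),(Y,d_{\can}))\le\Psi(\epsilon),$$
and letting $\epsilon\to0$ concludes.
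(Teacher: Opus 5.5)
Your proposal is correct and follows the same route as the paper. The paper also uses Lemma \ref{lem--gh close for smooth exhaustion} to reduce to showing that the surjective map $f:X\setminus\tilde V_\epsilon\to Y\setminus V_\epsilon$ has distortion $\Psi(t^{-1}|\epsilon)$, takes one inequality directly from the hypothesis, and obtains $d_t(p,q)\le d_{\can}(f(p),f(q))+\Psi(t^{-1}|\epsilon)$ from Lemma \ref{lem--convexity on y} and the locally uniform convergence; your horizontal-lift and fiber-path construction, with the $\delta\to0$ diagonal argument, simply spells out that step, which the paper leaves implicit.
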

\begin{proof}
	Thanks to Lemma \ref{lem--gh close for smooth exhaustion}, to prove \eqref{agogna} it is enough to show that
	\begin{equation}\label{eq--intermediate}
		d_{\gh}((X\setminus \tilde V_{\epsilon},d_t),  (Y\setminus V_{\epsilon},d_{\can}))=\Psi(t^{-1}| \epsilon).
	\end{equation} To show this, we will show that the fibration map
	\begin{equation}
		f:X\setminus \tilde V_{\epsilon}\rightarrow Y\setminus V_{\epsilon}
	\end{equation}is the Gromov-Hausdorff approximation that we want.
Indeed, by Lemma \ref{lem--convexity on y}, and the $C^0_{\loc}$-convergence of $\omega(t)$ to $\omega_{\can}$ on $f^{-1}(Y^{\circ})$ in \eqref{conv}, we know that for any $p,q\in X\setminus \tilde V_{\epsilon}$, we have
	\begin{equation}
		d_t(p,q)\leq d_{\can}(f(p),f(q))+\Psi(t^{-1}|\epsilon).
	\end{equation}
Combining this with \eqref{eq--to prove} then gives
	\begin{equation}
		\left|d_t(p,q)-d_{\can}(f(p),f(q))\right|=\Psi(t^{-1}|\epsilon),
	\end{equation}
and since clearly $f$ is surjective, we obtain the desired Gromov-Hausdorff approximation, which shows \eqref{eq--intermediate}.
\end{proof}

\subsection{Further reduction to an upper bound of $d_{\can}$ by the $\mathcal L$-distance}\label{sec--reparametrize}

In the previous subsection, Theorem \ref{main} has been reduced to proving \eqref{eq--to prove}. In this section, which still follows closely \cite{LT}, we show that \eqref{eq--to prove} would follow from the ``key estimate'' in \eqref{key}, which is a sharp upper bound for $d_{\can}$ in terms of Perelman's $\mathcal L$-distance. The key estimate will be proved in the next section.

As in \cite{LT}, we reparametrize the flow in a standard way. Given $T\gg 1$, let $g(t)$ be the Riemannian metric defined by $\omega(t)$, and define
\begin{equation}\label{eq-reparametrization}
	\tilde{g}(s):=e^{t-T} g(t), \quad s:=\frac{1}{2}\left(e^{t-T}-1\right),
\end{equation}
which solve the unnormalized Ricci flow
\begin{equation}\label{eq--standard Ricci-flow}
	\frac{\partial}{\partial s} \tilde{g}(s)=-2 \operatorname{Ric}(\tilde{g}(s)), \quad s \geqslant s_T:=\frac{1}{2}\left(e^{-T}-1\right),
\end{equation}
with $\tilde{g}(0)=g(T)$. We clearly have
\begin{equation}
g(t)=\frac{\tilde{g}(s)}{1+2 s}, \quad t=T+\log (1+2 s),
\end{equation}
and \eqref{stscal} becomes
\begin{equation}\label{stscal2}
\sup_X|R(\tilde{g}(s))| \leqslant \frac{C}{1+2 s},\quad s\geq s_T.
\end{equation}
We let $\tau=-s$, so that the metrics $\tilde{g}(\tau)$ solve the backwards Ricci flow
\begin{equation}
\frac{\partial}{\partial \tau} \tilde{g}=2 \operatorname{Ric}(\tilde{g}),\left.\quad \tilde{g}\right|_{\tau=0}=g(T).
\end{equation}
Following Perelman, one defines the $\mathcal L$-length of a curve $\gamma(\tau)$ in $X$ (which, following Perelman, we think of as the curve $(\gamma(\tau),\tau)$ in space-time)  by
\begin{equation}
\mathcal{L}(\gamma)=\int \sqrt{\tau} (R(\ti{g}(\tau))+ |\partial_\tau \gamma|^2_{\ti{g}(\tau)}) d\tau,
\end{equation}
 and the $\mathcal L$-distance for two points in space-time as the infimum of such. We will use the following convention.
 Fix two parameters
\begin{equation}
T \gg 1 \gg \ov{\tau} > 0.
\end{equation}
By an $\mathcal L$-geodesic $\gamma$ in space-time from $(p,0)_T$ to $(q,\ov{\tau})_T$, we mean that, after performing the reparametrization \eqref{eq-reparametrization}, the curve $\gamma$ is an $\mathcal L$-geodesic from $(p,0)$ to $(q,-\ov{\tau})$ in the space-time of the standard Ricci flow \eqref{eq--standard Ricci-flow}. We will denote by $L_T(q,\ov{\tau})$ the $\mathcal{L}$-distance between $(p,0)_T$ and $(q,\tau)_T$.

The following is then the key estimate that we shall prove:

\begin{proposition}[Key estimate]\label{prop--main to prove}
	For any two points $p, q\in X\setminus \tilde V_{\epsilon}$, we have
	\begin{equation}\label{key}
		L_T(q,\ov{\tau})\geq \frac{1}{2\sqrt{\ov{\tau}}}d_{\can}(f(p),f(q))^2+\Psi(T^{-1}|\epsilon,\ov{\tau})+\Psi(\bar\tau|\epsilon).
	\end{equation}
\end{proposition}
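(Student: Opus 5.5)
Fix $\epsilon$, $\bar\tau$, $p,q\in X\setminus\tilde V_\epsilon$, and a minimizing $\mathcal L$-geodesic $\gamma:[0,\bar\tau]\to X$ from $(p,0)_T$ to $(q,\bar\tau)_T$. The plan is to reduce to the Cauchy--Schwarz inequality. Change variables to $\sigma=\sqrt{\tau}$. Then
\[
\int_0^{\bar\tau}\sqrt\tau\,|\partial_\tau\gamma|^2_{\tilde g(\tau)}\,d\tau=\frac12\int_0^{\sqrt{\bar\tau}}|\partial_\sigma\gamma|^2_{\tilde g}\,d\sigma\ge \frac{1}{2\sqrt{\bar\tau}}\Big(\int_0^{\sqrt{\bar\tau}}|\partial_\sigma\gamma|_{\tilde g}\,d\sigma\Big)^2 .
\]
The scalar curvature term contributes at most $C\bar\tau^{3/2}/(1-2\bar\tau)$ in absolute value by \eqref{stscal2}, so it is absorbed into $\Psi(\bar\tau|\epsilon)$. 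It therefore suffices to show that the $\tilde g$-length of $\gamma$ is at least $d_{\can}(f(p),f(q))-\Psi(T^{-1}|\epsilon,\bar\tau)-\Psi(\bar\tau|\epsilon)$. Since $\tilde g(\tau)=(1-2\tau)g(T+\log(1-2\tau))$ and $t\to\infty$ uniformly for $\tau\in[0,\bar\tau]$, the time-dependence of the metric costs only a factor $1+O(\bar\tau)$. On compact subsets $K\Subset X^\circ$, \eqref{twyconv} gives $\tilde g\ge(1-\Psi(T^{-1}|K,\bar\tau))f^*g_{\can}$. Hence every portion of $\gamma$ lying in $f^{-1}(K)$ has $\tilde g$-length at least $(1-\Psi)$ times the $d_{\can}$-length of its projection.

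The issue is the portions of $\gamma$ that enter a neighborhood $U$ of $f^{-1}(D)$, where we have no control from below beyond \eqref{schwarz}. I would first use the freedom in choosing $\gamma$. Perelman's reduced volume monotonicity, combined with the scalar bound \eqref{stscal2} and the volume estimates \eqref{volscal}, shows that $L_T(\cdot,\bar\tau)$ is nearly constant in $q$ on a set of almost full measure near $q$. Averaging over endpoints $q'$ in a small $\omega(T)$-ball around $q$ then gives the following: the $\tilde g$-measure of the set of $q'$ whose minimizing $\mathcal L$-geodesics spend $\sigma$-time more than $\eta$ inside $U$ is bounded by $C\eta^{-1}$ times a weighted spacetime volume of $U$. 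By \eqref{fut} this weighted volume is at most $C\int_{f(U)}\omega_{\can}^m$. The cost of moving the endpoint from $q$ to $q'$ is small, since $d_{\can}$ is continuous on $Y\setminus V_{\epsilon}$ and $\omega(T)$ is uniformly close to $f^*\omega_{\can}$ away from $\tilde V_{\epsilon/2}$.

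This is the almost-avoidance principle. With it, I can choose $\gamma$ so that it avoids a neighborhood of $f^{-1}(Y\setminus Y^{\reg})$ entirely, and meets a neighborhood $U$ of $f^{-1}(D\cap Y^{\reg})$ only in a controlled number $N$ of excursions. I then repair each excursion on the base. Let $a,b$ be the entry and exit points of an excursion. Because $f(a),f(b)\in Y_r$ for some $r=r(\epsilon)$, Theorem \ref{zatta} gives $d_{\can}(f(a),f(b))\le C(r)d_Y(f(a),f(b))^{\alpha/2}$. By \eqref{schwarz}, $d_Y(f(a),f(b))$ is bounded by $C$ times the $\tilde g$-length of the excursion. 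Summing over excursions and applying the triangle inequality for $d_{\can}$ yields
\[
d_{\can}(f(p),f(q))\le(1+\Psi)\,\mathrm{length}_{\tilde g}(\gamma)+C(r)\sum_{i}\ell_i^{\alpha/2},
\]
where $\ell_i$ are the excursion lengths. The error sum is small provided that $N$ is bounded and the total excursion length tends to $0$ as the neighborhood $U$ shrinks.

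The main obstacle is designing the neighborhoods. They must make the weighted volume of $U$ small relative to the allowed number and length of excursions, and simultaneously make the contribution near $Y\setminus Y^{\reg}$ negligible with no Hölder estimate available there. I would try a stratification: a tubular neighborhood of $f^{-1}(Y^{\sing})$ of $d_{\can}$-radius $\rho$, with volume $o(1)$ by \eqref{kazzo} and Theorem \ref{thm--rcd}, together with a union of $d_{\can}$-balls of radius $\rho'\ll\rho$ covering $D\cap Y_{\rho}$, whose volume is $\le C\rho'^2$ by the codimension-two Hausdorff bound. The radii would then be tuned so that the measure-theoretic bound from the averaging step forces few, short excursions. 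Finally, let $T\to\infty$, then the neighborhoods shrink, then $\bar\tau\to0$.
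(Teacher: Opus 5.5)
Your architecture is the paper's. You reduce to a lower bound on the $\tilde g$-length of $\gamma$ by Cauchy--Schwarz, and use Perelman's Jacobian monotonicity to select endpoints $q'$ near $q$ whose $\mathcal L$-geodesics behave well. You then avoid $f^{-1}$ of a neighborhood of $Y\setminus Y^{\reg}$ entirely, and repair the remaining excursions near $D\cap Y^{\reg}$ with Theorem~\ref{zatta}. However, the step you defer as ``the main obstacle'' is where the proof actually lives, and the inputs you propose for it are not enough.

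Total avoidance first. A geodesic that reaches $T_{\eta/2}(Y\setminus Y^{\reg})$ from outside $T_\eta(Y\setminus Y^{\reg})$ must cover $\tilde g$-length $\gtrsim\eta$. By Cauchy--Schwarz, with $\tau\gtrsim\bar\tau$ there by Lemma~\ref{lem--lower bound of first bad time}, it therefore spends time $\gtrsim\bar\tau\eta^2$ in that tube. So the measure of bad endpoints is only bounded by $Ce^{C/\bar\tau}\eta^{-2}\vol(T_\eta(Y\setminus Y^{\reg}),\omega_{\can}^m)e^{-(n-m)T}$. This is small only if the tube volume is $o(\eta^2)$. The qualitative $o(1)$ from \eqref{kazzo} gives nothing, and codimension two is exactly borderline. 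The paper obtains $\vol(T_r(Y\setminus Y^{\reg}),\omega_{\can}^m)\le Cr^{3-\rho_0}$ in two steps. It shows that no singular point of $Y$ is, at any scale, close to a cone splitting off $\mathbb R^{2m-2}$ (Proposition~\ref{savior}), so $Y\setminus Y^{\reg}\subset\mathcal S^{2m-3}_{\rho_0,r}$. It then invokes the quantitative-stratification volume bounds for RCD spaces.

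Near $D\cap Y^{\reg}$, the Hausdorff bound $\dim_{\mathcal H}\le 2m-2$ gives no tube-volume (Minkowski) bound such as $\vol(T_{\rho'}(D\cap Y_\rho))\le C(\rho')^2$, since Hausdorff dimension only controls covers with varying radii. The paper needs two separate mechanisms here:
\begin{itemize}
\item On $D_1=D\cap(Y^{\reg}\setminus\mathcal R_{\epsilon_1})$, the inclusion $Y\setminus\mathcal R_\epsilon\subset\mathcal S^{2m-2}_{\rho,r}$ gives the bound $r^{2-\rho}$.
\item On the almost-regular part $D_2=D\cap\mathcal R_{\epsilon_1}$, metric stratification says nothing. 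There the paper combines $d_Y\le Cd_{\can}$, the Euclidean Minkowski content of an analytic set in $(Y^{\reg},\omega_Y)$, and the $L^{p_0}$-integrability of $\omega_{\can}^m/\omega_Y^m$ on $\mathcal R_{\epsilon_1}$ (Lemma~\ref{lem--high integrability}). H\"older's inequality then gives $(\eta')^{2-2/p_0}$.
\end{itemize}

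Because these bounds are only of order $\eta^{2-\rho}$, the number $N$ of excursions is not bounded, as your argument requires. The event-counting argument uses the separation between $U_{\eta/2}$ and $\partial U_\eta$, not just a bound on time spent. It yields $N\le\eta^{-\epsilon}$ only for $\epsilon>\rho/2$, with total excursion length $\lesssim\eta^{1/2}$. The repair error $N^{1-\alpha/2}(\sum_i\ell_i)^{\alpha/2}\lesssim\eta^{\alpha/4-\epsilon(1-\alpha/2)}$ then tends to zero only if $\epsilon<\alpha/(4(1-\alpha/2))$. This coupling of $\rho$ to the H\"older exponent $\alpha$ is what dictates the choice of $p_0$ and $\epsilon_1$ in \eqref{eq--choice of epsilon0'}, and the hierarchy $\eta'\ll\eta\ll\bar\eta\ll\bar\tau\ll\delta$.

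One further correction: the constant in Theorem~\ref{zatta} depends on $r=r(\bar\eta)$, coming from the avoided tube, not on $\epsilon$. The excursion endpoints lie near $D$, not in $Y\setminus V_\epsilon$.
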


Following the argument in \cite{LT}, we now show that Proposition \ref{prop--main to prove} implies Theorem \ref{main}:
\begin{proof}[Proof of Theorem \ref{main}, assuming Proposition \ref{prop--main to prove}]
Thanks to Lemma \ref{lem--dcan bound by dt}, it suffices to show that \eqref{eq--to prove} holds. For this, we first observe the following easy upper bound of the $\mathcal L$-distance. By Lemma \ref{lem--convexity on y}, given $p,q\in  X\setminus \tilde V_{\epsilon}$, we can find a curve $\gamma$ contained in $Y\setminus V_{\epsilon'}$ connecting $f(p)$ and $f(q)$ such that
\begin{equation}
	\mathrm{length}_{\omega_{\can}}(\gamma)\leq d_{\can}(f(p),f(q))+\Psi(\epsilon'|\epsilon).
\end{equation}
Then using the fiber bundle structure of $f$ and the $C^0_{\loc}$ convergence of $\omega(t)$, arguing as in \cite[Proof of Lemma 9.1]{TWY2}, we can construct a curve $\tilde \gamma$ contained in $X^\circ$, connecting $p$ and $q$, such that
\begin{equation}\label{eq--length upper bound}
	\mathrm{length}_{\omega(T)}(\tilde \gamma)\leq d_{\can}(f(p),f(q))+\Psi(\epsilon'|\epsilon)+\Psi(T^{-1}|\epsilon').
\end{equation}Note that this curve $\tilde\gamma$ depends on the parameter $\epsilon'$ and for simplicity of notation, in the following, we omit the dependence on $\epsilon'$.

We parametrize $\tilde \gamma$ by $\tau \in [0,\ov{\tau}]$ such that $|\de_\tau\ti{\gamma}|_{\ti{g}(\tau)}=\frac{A}{2\sqrt{\tau\ov{\tau}}}$ for all $0<\tau\leq\ov{\tau}$, where \begin{equation}
A=\int_0^{\ov{\tau}}|\de_\tau\ti{\gamma}|_{\ti{g}(\tau)}d\tau.
\end{equation}
By the definition of $\tilde g(\tau)$ and the asymptotics \eqref{twyconv}, we know that for $\tau\in [0,\bar\tau]$,
\begin{equation}\label{eq--comparable}
\begin{aligned}
	\tilde g(\tau)&=(1-2\tau) g(T+\log(1-2\tau))\\&\leq (1+\Psi(T^{-1}|\epsilon))(f^*\omega_{\can}+e^{-T}(1-2\tau)^{-1}\omega_{\rm SRF})\\
	&\leq (1+4\ov{\tau})(1+\Psi(T^{-1}|\epsilon)) g(T)\\
	&=(1+4\ov{\tau})(1+\Psi(T^{-1}|\epsilon))\tilde g(0)
\end{aligned}
\end{equation}Combining this with \eqref{eq--length upper bound}, we see that
\begin{equation}
	A\leq (1+4\ov{\tau}+\Psi(T^{-1}|\epsilon))d_{\rm can}(f(p),f(q))+\Psi(T^{-1}|\ov{\tau},\epsilon).
\end{equation}
Using the scalar curvature bound \eqref{stscal2}, we can then estimate
\begin{equation}\label{eq--upper bound of ldistance}
	\begin{aligned}
		L_T(q,\ov{\tau}) &\leq\mathcal{L}(\ti{\gamma})
\leq C\ov{\tau}^{\frac{3}{2}}+\int_0^{\ov{\tau}}\sqrt{\tau}|\de_\tau\ti{\gamma}|^2_{\ti{g}(\tau)}d\tau\leq C\ov{\tau}^{\frac{3}{2}}+\frac{A^2}{2\sqrt{\ov{\tau}}}\\
&\leq \frac{1}{2\sqrt{\ov{\tau}}}d_{\rm can}(f(p),f(q))^2+\Psi(T^{-1}|\ov{\tau},\epsilon)+\Psi(\ov{\tau}|\epsilon).
	\end{aligned}
\end{equation}
Now, using the key estimate \eqref{key} and the triangle inequality, we see that given any small $\delta>0$ and any point $q'\in X$ with $f(q')\in B^{d_{\can}}(f(q),\delta)$, we have
\begin{equation}
		L_T(q',\ov{\tau})\geq \frac{1}{2\sqrt{\ov{\tau}}}\left(d_{\can}(f(p),f(q))^2-\Psi(\delta|\epsilon)\right)+\Psi(T^{-1}|\epsilon,\ov{\tau})+\Psi(\bar\tau|\epsilon).
\end{equation}
	We consider
	\begin{equation}
		\ov{L}(q',\tau):=2\sqrt{\tau}L_T(q',\tau),
	\end{equation}which satisfies
\begin{equation}\label{eq--heat equation for barL}
\left(\frac{\partial}{\partial \tau}+\Delta_{\ti{g}(\tau)}\right)\ov{L} \leq 4n,
\end{equation}
\begin{equation}\label{lower bound for lbart}
	\ov{L}(q',\ov{\tau})\geq d_{\can}(f(p),f(q))^2+\Psi(T^{-1}|\epsilon,\ov{\tau})+\Psi(\bar\tau|\epsilon)+\Psi(\delta|\epsilon),
\end{equation}and
\begin{equation}\label{gimme}
	\lim_{\tau\rightarrow 0^+}\ov{L}(q',\tau)=d_{\tilde g(0)}(p,q')^2=d_T(p,q')^2.
\end{equation}

Let $\chi$ be a smooth (time-independent) cutoff function on $Y$ supported in $B^{d_{\can}}(f(q),2\delta)$ and equal to $1$ on $B^{d_{\can}}(f(q),\delta)$, and denote by the same symbol its pullback to $X$ via $f$. We know that
	$-C(\epsilon) \omega_Y\leq \delta^2\ii \partial\db \chi\leq C(\epsilon) \omega_Y,$
hence by \eqref{schwarz} we have
\begin{equation}\label{eq--bound on cut-off}
\sup_X|\Delta_{\ti{g}(\tau)} \chi|\leq C(\epsilon)\delta^{-2}.
\end{equation}Using \eqref{eq--heat equation for barL}, we obtain
\begin{equation}\begin{split}
\int_X \chi \ov{L}(\cdot,0)\ti{\omega}^n(0)
  \geq&\int_X \chi \ov{L}(\cdot,\ov{\tau})\ti{\omega}^n(\ov{\tau}) + \int_0^{\ov{\tau}} \int_X  \ov{L}(\cdot,\tau) \Delta_{\ti{g}(\tau)} \chi\ti{\omega}^n(\tau) d\tau\\
  &-4n \int_0^{\ov{\tau}} \int_X  \ov{L}(\cdot,\tau) \chi\ti{\omega}^n(\tau) d\tau \\
  &-2\int_0^{\ov{\tau}}\int_X \chi\ov{L}(\cdot,\tau) R(\ti{g}(\tau))\ti{\omega}^n(\tau) d\tau,
\end{split}
\end{equation}
Then using the upper bound on $\ov{L}$ in \eqref{eq--upper bound of ldistance}, the scalar curvature bound \eqref{stscal2}, \eqref{lower bound for lbart} and \eqref{eq--bound on cut-off}, arguing as in \cite{LT} we can obtain that
\begin{equation}\label{integral bound}
\begin{aligned}
	\int_X \chi \ov{L}(\cdot,0)\ti{\omega}^n(0)\geq  &\left(d_{\rm can}(f(p),f(q))^2+\Psi(T^{-1}|\epsilon,\ov{\tau})+\Psi(\ov{\tau}|\delta,\epsilon) \right)\int_X \chi \ti{\omega}^n(\ov{\tau}).
\end{aligned}
\end{equation}
Note that for $q'\in X$ with $f(q')\in B^{d_{\can}}(f(q),\delta)$, we have
\begin{equation}\label{pointwise lower bound of lbar}
	d_{T}(p,q)= d_{T}(p,q')+\Psi(T^{-1},\delta|\epsilon),
\end{equation}then using \eqref{gimme}, we obtain
\begin{equation}\label{due}
\int_X \chi \ov{L}(\cdot,0)\ti{\omega}^n(0)=\int_X \chi d_T(p,\cdot)^2\ti{\omega}^n(0)\leq (d_T(p,q)+\Psi(T^{-1}|\delta,\ve))^2\int_X\chi \ti{\omega}^n(0),
\end{equation}
and using the scalar curvature lower bound, we have
\begin{equation}\label{difference of volume}
	\int_X \chi \ti{\omega}^n(\ov{\tau})\geq (1-\Psi(\ov{\tau}|\delta,\epsilon))\int_X\chi \ti{\omega}^n(0).
\end{equation}Combining \eqref{integral bound}, \eqref{due}, \eqref{pointwise lower bound of lbar}, and \eqref{difference of volume}, and choosing $\delta$ sufficiently small, then $\ov{\tau}$ sufficiently small, and finally $T$ sufficiently large, we obtain the desired estimate in \eqref{eq--to prove}:
\begin{equation}
    d_T(p,q) \ge d_{\can}(f(p),f(q)) + \Psi(T^{-1}| \epsilon).
\end{equation}
\end{proof}

\section{Proof of the main theorem}\label{sec--main}
In this section we give the proof of Proposition \ref{prop--main to prove}, which implies Theorem \ref{main} by the discussion in  Section \ref{sec--reduction}.
In the following, we always consider points $p,q \in X \setminus \tilde V_{\epsilon}$ and parameters $\delta$ satisfying
\begin{equation}
    \delta \ll \epsilon.
\end{equation}
We note that by \eqref{useless}, for the given parameters $\epsilon$ and $\delta$, and for all sufficiently large $T$, we have
\begin{equation}
    f^{-1}\big(B^{d_{\can}}(f(q),\delta/2)\big)
    \subset B^{d_T}(q,\delta)
    \subset f^{-1}\big(B^{d_{\can}}(f(q),2\delta)\big).
\end{equation}
Therefore, in the following we will not distinguish between
$f^{-1}\big(B^{d_{\can}}(f(q),\delta)\big)$ and $B^{d_T}(q,\delta)$.
For simplicity of notation, we will also omit the dependence on $\epsilon$ in Proposition \ref{prop--main to prove} throughout this section.
\subsection{An almost-avoidance principle} In this subsection, we show that for a family of sets $\{U_\eta\}_{\eta>0}$ satisfying certain properties, namely small volume and a separation between $\partial U_\eta$ and $\partial U_{\eta/2}$, a typical $\mathcal L$-geodesic connecting $p$ and $q$ intersects $f^{-1}(U_\eta \setminus U_{\eta/2})$ at most $\eta^{-\varepsilon}$ times. Since we will apply this result three times to different families of sets, we find it convenient to abstract the properties that we use, as follows.

Throughout this section, we will consider $\{U_\eta\}_{\eta\in (0,\eta_0]}$ a family of open subsets of $Y$
satisfying the following conditions
\begin{enumerate}
\item Nestedness: $U_{\eta'}\subset U_\eta$  for $\eta'\leq \eta$.
	\item Minkowski content bound: there exists $\rho\in (0,1)$ and $C>0$, such that \begin{equation}
		\vol(U_\eta, \omega_{\can}^{m})\leq C\eta^{2-\rho},
	\end{equation}
for all $0<\eta\leq\eta_0$.
	\item Separation: there exists $c>0$ such that
	\begin{equation}
		 d_{\can}(x, \partial  U_{\eta})\geq c\eta,
	\end{equation}
 for any $0<\eta\leq\eta_0$ and $x\in U_{\eta/2}$.
	\item Normalization: $\left(B^{d_{\can}}(f(p),\delta)\cup B^{d_{\can}}(f(q),\delta)\right)\cap U_{\eta_0}=\emptyset$.
\end{enumerate}
We will also consider the following stronger condition:
\begin{itemize}
\item[(2')] There exists $\rho\in (0,1)$ and $C>0$, such that \begin{equation}
		\vol(U_\eta, \omega_{\can}^{m})\leq C\eta^{3-\rho},
	\end{equation}
for all $0<\eta\leq\eta_0$.
\end{itemize}

We set
\begin{equation}
	\tilde U_{\eta}=f^{-1}(U_\eta)\subset X.
\end{equation}
\begin{defn}
	For a piecewise differentiable curve $\gamma$ inside $X$, we say $\gamma$ has an $\eta$-event with respect to $\{U_{\eta}\}$ if $f(\gamma)$ enters $U_{\eta}$ and reaches $U_{\frac{\eta}{2}}$ before returning to the boundary of $U_{\eta}$.
\end{defn}
	
As explained in Section \ref{sec--reparametrize}, we rescale and reparametrize the flow to obtain an unnormalized Ricci flow and study its $\mathcal{L}$-geodesics.
We use the notation introduced there throughout the following discussion. The following Lemma is analogous to \cite[(4.31)]{LT}:

\begin{lemma}\label{lem--lower bound of first bad time}
Let $\gamma$ be a minimizing $\mathcal L$-geodesic from $(p,0)_T$ to $(q',\ov{\tau})_T$,
where $q'$ satisfies $f(q') \in B^{d_{\can}}(f(q),\delta)$.
Let $\ov{\tau}'$ denote the first time at which $f(\gamma)$ exits the ball
$B^{d_{\can}}(f(p),\delta)$.
Then, for $\ov{\tau}$ sufficiently small and $T$ sufficiently large, there exists a constant $C = C(\delta)$ such that
\begin{equation}
    \ov{\tau}' \ge C^{-1}\ov{\tau}.
\end{equation}
\end{lemma}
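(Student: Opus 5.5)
We compare an upper and a lower bound for the $\mathcal L$-length of the initial piece $\gamma|_{[0,\ov{\tau}']}$.

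\emph{Upper bound.} By \eqref{eq--upper bound of ldistance} applied with $q'$ in place of $q$ (this is legitimate since $f(q')\in B^{d_{\can}}(f(q),\delta)$ and the construction there only needs a nearby point in the good region),
\[
\mathcal L(\gamma)=L_T(q',\ov{\tau})\le \frac{C}{\sqrt{\ov{\tau}}},
\]
with $C$ depending only on the $d_{\can}$-diameter of $Y$ and $\epsilon$. The same bound holds for the restriction of $\gamma$ to $[0,\ov{\tau}']$, up to an additive $C\ov{\tau}^{3/2}$ coming from the scalar curvature term. Indeed, by \eqref{stscal2} we have $|R(\ti g(\tau))|\le C$ for $\tau\le\ov{\tau}\le 1/4$, so
\[
\int_0^{\ov{\tau}}\sqrt{\tau}\,|R|\,d\tau\le C\ov{\tau}^{3/2}.
\]
It follows that
\[
\int_0^{\ov{\tau}'}\sqrt{\tau}\,|\de_\tau\gamma|^2_{\ti g(\tau)}\,d\tau\le \frac{C}{\sqrt{\ov{\tau}}}.
\]

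\emph{Lower bound.} By Cauchy--Schwarz,
\[
\left(\int_0^{\ov{\tau}'}|\de_\tau\gamma|_{\ti g(\tau)}\,d\tau\right)^2\le \int_0^{\ov{\tau}'}\frac{d\tau}{\sqrt\tau}\cdot\int_0^{\ov{\tau}'}\sqrt\tau\,|\de_\tau\gamma|^2\,d\tau=2\sqrt{\ov{\tau}'}\int_0^{\ov{\tau}'}\sqrt\tau\,|\de_\tau\gamma|^2\,d\tau.
\]
So it suffices to show that the $\ti g(\tau)$-length of $\gamma|_{[0,\ov{\tau}']}$ is at least $c(\delta)>0$. Combining the two estimates then gives $c(\delta)^2\le 2C\sqrt{\ov{\tau}'/\ov{\tau}}$, that is $\ov{\tau}'\ge C(\delta)^{-1}\ov{\tau}$.

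\emph{The length bound (main obstacle).} The difficulty is that $\gamma$ may wander into $\tilde V_{\epsilon}$, where we have no two-sided control of $\omega(t)$ by $f^*\omega_{\can}$. On the time interval $[0,\ov{\tau}']$, however, $f(\gamma)$ stays in $\overline{B^{d_{\can}}(f(p),\delta)}$, and since $\delta\ll\epsilon$ and $f(p)\notin V_\epsilon$, this ball lies in a fixed compact subset $K\Subset Y^\circ$ (say inside $Y\setminus V_{\epsilon/2}$). On $f^{-1}(K)$, \eqref{twyconv} together with the comparison \eqref{eq--comparable} gives, for $T$ large and $\tau\in[0,\ov{\tau}]$,
\[
\ti g(\tau)\ge \tfrac12 f^*g_{\can}.
\]
Therefore the $\ti g(\tau)$-length of $\gamma|_{[0,\ov{\tau}']}$ is at least $\tfrac{1}{\sqrt2}$ times the $\omega_{\can}$-length of $f(\gamma|_{[0,\ov{\tau}']})$. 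The latter curve runs from $f(p)$ to the boundary of $B^{d_{\can}}(f(p),\delta)$, so its length is at least $\delta$, since it stays in $Y^\circ$ and $d_{\can}$ restricted there is bounded by the length. This gives $c(\delta)=\delta/\sqrt2$ and completes the argument.

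If the required compactness is not available (for instance if $\delta$ is not small relative to $\epsilon$), I would instead use the global Schwarz bound \eqref{schwarz}, $\omega(t)\ge C^{-1}f^*\omega_Y$. This shows that leaving the ball forces a $d_Y$-displacement bounded below by some $c(\delta)>0$, using the continuity of the identity map $(Y,d_{\can})\to(Y,d_Y)$ (Theorem \ref{thm--rcd} shows the two metrics induce the same topology) together with compactness.
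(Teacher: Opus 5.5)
Your proposal is correct and is essentially the paper's proof. The paper also bounds the $\ti{g}(\tau)$-length of $\gamma|_{[0,\ov{\tau}']}$ from below by $\delta(1+\Psi(T^{-1},\ov{\tau}))^{-1}$, using \eqref{twyconv} over $B^{d_{\can}}(f(p),\delta)\Subset Y^\circ$, and then combines Cauchy--Schwarz against $\tau^{-1/2}$ with \eqref{stscal2} and the bound $L_T(q',\ov{\tau})\le C\ov{\tau}^{-1/2}$ from \eqref{eq--upper bound of ldistance} to get $\delta\le C\ov{\tau}'^{1/4}\ov{\tau}^{-1/4}$.

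Your fallback is unnecessary, since $\delta\ll\epsilon$ is a standing assumption in that section; it also needs uniform continuity of the identity $(Y,d_Y)\to(Y,d_{\can})$, not the direction you stated, though both directions hold by Theorem \ref{thm--rcd} and compactness.
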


\begin{proof}Using \eqref{twyconv}, \eqref{stscal2} and the upper bound of $\mathcal L$-geodesics \eqref{eq--upper bound of ldistance}, we have
	\begin{equation}\begin{split}
\delta &\leq (1+\Psi(T^{-1},\ov{\tau}))\int_0^{\ov{\tau}'}|\de_\tau\gamma|_{\ti{g}(\tau)}d\tau\\
&\leq C\left(\int_0^{\ov{\tau}'}\sqrt{\tau}|\de_\tau\gamma|^2_{\ti{g}(\tau)}d\tau\right)^{\frac{1}{2}}\left(\int_0^{\ov{\tau}'}\frac{1}{\sqrt{\tau}}d\tau\right)^{\frac{1}{2}}\\
&\leq C\ov{\tau}'^{\frac{1}{4}}\left(C\ov{\tau}'^{\frac{3}{2}}+\int_0^{\ov{\tau}'}\sqrt{\tau}(R(\ti{g}(\tau))+|\de_\tau\gamma|^2_{\ti{g}(\tau)})d\tau\right)^{\frac{1}{2}}\\
&\leq C\ov{\tau}'^{\frac{1}{4}}\left(C\ov{\tau}'^{\frac{3}{2}}+C\ov{\tau}^{-\frac{1}{2}}\right)^{\frac{1}{2}}\\
&\leq C\ov{\tau}'^{\frac{1}{4}}\ov{\tau}^{-\frac{1}{4}}.
\end{split}\end{equation}
\end{proof}

In the following, $\ov{\tau}$ will always be chosen sufficiently small (depending on $\delta$) so that Lemma \ref{lem--lower bound of first bad time} holds.

The following observation will be crucial for our arguments:
\begin{proposition}\label{prop-with controlled events}
	Let $W \subset f^{-1}(B^{d_{\can}}(f(q),2\delta))$ be a Borel subset with positive volume, i.e.
	\begin{equation}\label{positive volume}
		\vol(W,\omega_0^n)>0.
	\end{equation} and let $\{U_\eta\}$ be a family of subsets of $Y$ satisfying (1)-(4) above and fix $\epsilon>\rho/2$, where $\rho$ is as in condition (2) above. Then for
	\begin{equation}
		T^{-1}\ll \eta\ll \ov{\tau}\ll \delta,
	\end{equation} there exists a locally closed subset  $\Omega=\Omega(T,\eta,\bar\tau,\delta)\subset W$ with volume
\begin{equation}
	\vol(\Omega,\omega(T)^n)\geq \frac{1}{2}\vol(W,\omega(T)^n)
\end{equation}such that for any $q'\in \Omega$, there exists a minimizing
 $\mathcal L$-geodesic from $(p,0)_T$ to $(q', \ov{\tau})_T$ for which the number of $\eta$-events with respect to $\{U_\eta\}$ is at most $\eta^{-\epsilon}$.

 If, on the other hand, we assume that the family $\{U_\eta\}$ satisfies (1),(2'),(3),(4), then for $T^{-1}\ll \eta\ll \ov{\tau}\ll \delta$ there is $\Omega\subset W$ as above such that for any $q'\in \Omega$, there exists a minimizing
 $\mathcal L$-geodesic from $(p,0)_T$ to $(q', \ov{\tau})_T$ which is disjoint from $\ti{U}_{\eta/2}$.
\end{proposition}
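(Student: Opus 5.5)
We will use the Perelman reduced-volume argument from \cite{LT}. Let $\Phi$ be the $\mathcal L$-exponential map based at $(p,0)_T$, sending $(v,\tau)$ to the endpoint of the $\mathcal L$-geodesic with initial velocity $v$. The reduced volume monotonicity, in its Jacobian form, says that $\tau^{-n}e^{-\ell(\Phi(v,\tau),\tau)}\mathcal J(v,\tau)$ is nonincreasing in $\tau$ along every minimizing $\mathcal L$-geodesic. Here $\ell=L/(2\sqrt\tau)$ and $\mathcal J$ is the Jacobian of $\Phi(\cdot,\tau)$ with respect to $\ti g(\tau)$.

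We pull back the volume of $W$ at time $\ov\tau$ to a set of initial velocities $\mathcal V_W\subset T_pX$. By \eqref{eq--upper bound of ldistance}, $\ell\le C(\delta)$ on $W$, and by Lemma \ref{lem--lower bound of first bad time} the relevant geodesics leave the $\delta$-ball around $p$ only after a time comparable to $\ov\tau$. Monotonicity then gives a two-sided comparison. It bounds $\vol(W,\ti g(\ov\tau))$, which is comparable to $\vol(W,\omega(T)^n)$, above by $\ov\tau^{\,n}$ times the Euclidean-Gaussian measure of $\mathcal V_W$. Conversely, for any subset of $\mathcal V_W$, the $\ti g(\tau)$-volume of its image at an intermediate time $\tau$ is at least $C(\delta)^{-1}(\tau/\ov\tau)^{n}$ times the volume of the corresponding subset of $W$.

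The core is a counting/Fubini argument. Let $\mathcal B\subset W$ be the set of points $q'$ all of whose minimizing geodesics have more than $\eta^{-\epsilon}$ $\eta$-events; we aim to show $\vol(\mathcal B)<\frac12\vol(W)$. For each geodesic we look at its $\eta$-events. At each event, $f(\gamma)$ crosses from $\partial U_\eta$ to $U_{\eta/2}$. By the separation condition (3) and the Schwarz-type comparison $\ti g\ge C^{-1}f^*\omega_{\can}$ (a consequence of \eqref{twyconv} near $U_\eta\setminus D$; near $D$ we use condition (3) for $d_{\can}$ together with \eqref{schwarz}), the geodesic travels $\ti g$-length at least $c\eta$ inside $\ti U_\eta$ during each event. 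The $\mathcal L$-energy bound $\int\sqrt\tau|\de_\tau\gamma|^2\le C\ov\tau^{-1/2}$ then forces each event to last a $\tau$-duration of at least $\sim\eta^2$. Here we use Cauchy--Schwarz on the time interval together with $\tau\le\ov\tau$. This gives $\mathcal L^1\{\tau:\gamma(\tau)\in\ti U_\eta\}\gtrsim \eta^{2}\cdot\eta^{-\epsilon}$ for every geodesic ending in $\mathcal B$.

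Integrating over the set of initial vectors ending in $\mathcal B$ and applying Fubini and the monotonicity comparison, we get
\[
\vol(\mathcal B)\,\eta^{2-\epsilon}\lesssim C(\delta,\ov\tau)\int_{0}^{\ov\tau}\vol(\ti U_\eta,\ti g(\tau))\,d\tau\lesssim C(\delta,\ov\tau)\vol(U_\eta,\omega_{\can}^m)\le C\eta^{2-\rho}.
\]
The middle step uses \eqref{fut}, with the $\tau$-integral restricted to $\tau\ge C^{-1}\ov\tau$ as allowed by condition (4) and Lemma \ref{lem--lower bound of first bad time}. This yields $\vol(\mathcal B)\le C\eta^{\epsilon-\rho}$ relative to $\vol(W)$. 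Because $\epsilon>\rho/2$, we instead need to be careful: an event of spatial length $c\eta$ costs $\mathcal L$-energy roughly $\eta^2/\Delta\tau$. We therefore split events into short ones, with $\Delta\tau\le\eta^{2-\epsilon}$, each costing energy $\gtrsim\eta^{\epsilon}$. More than $\eta^{-\epsilon}$ of these are impossible under the bound $C\ov\tau^{-1/2}$ once we also use that $L_T$ is nearly minimal, so the excess is $\Psi$-small. The events of long duration are then counted via the volume estimate above, giving time in $\ti U_\eta$ at least $\eta^{2-\epsilon}\cdot\#$ and hence $\vol(\mathcal B)\lesssim\eta^{\epsilon-\rho}+\ldots\to0$. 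Choosing $\eta\ll\ov\tau$ (and then $T$ large so that \eqref{twyconv} holds on the relevant compact sets) finishes the first claim. This balancing of the exponents is the main obstacle, and we expect to tune the short/long threshold (perhaps to $\eta^{2-\rho/2}$) until the exponents close with exactly $\epsilon>\rho/2$.

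Under (2'), the same argument needs only one event: a single entry into $\ti U_{\eta/2}$ already forces occupation time $\gtrsim\eta^{2}$ in $\ti U_\eta$, or else energy $\gtrsim$ const. The bad volume is then $\lesssim\eta^{3-\rho}/\eta^{2}=\eta^{1-\rho}\to0$, which gives geodesics disjoint from $\ti U_{\eta/2}$. Finally, local closedness of $\Omega$ follows from choosing it as the complement in $W$ of the image of the closed set of bad initial data, intersected with the injectivity domain of $\Phi$.
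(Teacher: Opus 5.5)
Your setup matches the paper's: monotonicity of $\tau^{-n}e^{-\ell}J$, the localization $I_v\subset[C^{-1}\ov{\tau},\ov{\tau}]$ from Lemma \ref{lem--lower bound of first bad time}, Fubini against $\int_{C^{-1}\ov{\tau}}^{\ov{\tau}}\vol(\ti{U}_\eta,\ti{\omega}(\tau)^n)\,d\tau$, and \eqref{ut}. Your (2') argument is essentially the paper's. The gap is in the first part, at exactly the step you call the main obstacle.

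You apply Cauchy--Schwarz event by event, each time against the full energy budget $\int\sqrt{\tau}|\de_\tau\gamma|^2_{\ti{g}(\tau)}d\tau\le C\ov{\tau}^{-1/2}$. This gives only $\gtrsim\ov{\tau}\eta^2$ per event, hence $|I_v|\gtrsim\ov{\tau}\eta^{2-\epsilon}$ and a bad volume of order $\eta^{\epsilon-\rho}$, which needs $\epsilon>\rho$. The repair you sketch does not work as written:
\begin{itemize}
\item $\eta^{-\epsilon}$ short events of duration $\le\eta^{2-\epsilon}$ force a total energy of only $\gtrsim\sqrt{\ov{\tau}}$, well within the budget $C\ov{\tau}^{-1/2}$, so they are not excluded.
\item There is no ``near-minimality of $L_T$'' available to make the excess small. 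A lower bound for $L_T$ in terms of $d_{\can}$ is precisely the key estimate \eqref{key}, which is proved later using this proposition, so that appeal is circular.
\end{itemize}

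The missing idea is that all events draw on one energy budget, so Cauchy--Schwarz should be applied once, on the whole of $I_v$, against the total length. If $\gamma$ has $N>\eta^{-\epsilon}$ events, each of $d_{\can}$-length at least $c\eta$, then
\[
cN\eta\le\int_{I_v}|\de_\tau\gamma|_{\ti{g}(\tau)}\,d\tau\le C\ov{\tau}^{-1/4}\Big(\int_{I_v}\sqrt{\tau}\,|\de_\tau\gamma|^2_{\ti{g}(\tau)}\,d\tau\Big)^{1/2}|I_v|^{1/2}\le C\ov{\tau}^{-1/2}|I_v|^{1/2},
\]
so $|I_v|\ge C^{-1}\ov{\tau}N^2\eta^2\ge C^{-1}\ov{\tau}\eta^{2-2\epsilon}$. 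This is quadratic rather than linear in $N$. Inserted into the Fubini inequality together with \eqref{ut}, it bounds the bad volume by $Ce^{C/\ov{\tau}}\eta^{2\epsilon-\rho}e^{-(n-m)T}$. That is below $\frac12\vol(W,\omega(T)^n)\ge ce^{-(n-m)T}$ for $\eta\ll\ov{\tau}$ exactly because $\epsilon>\rho/2$.

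Your tentative threshold $\eta^{2-\rho/2}$ would in fact also close using only the energy budget:
\begin{itemize}
\item short events then number at most $C\ov{\tau}^{-1}\eta^{-\rho/2}<\frac12\eta^{-\epsilon}$;
\item the remaining long events give $|I_v|\ge\frac12\eta^{2-\rho/2-\epsilon}$;
\item the bad volume is then $\lesssim e^{C/\ov{\tau}}\eta^{\epsilon-\rho/2}$.
\end{itemize}
But you did not carry this out, and it is a detour around the one-line collective estimate.

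Finally, on $W$ you only have $\ell\le C/\ov{\tau}$, not $C(\delta)$. This is where the factor $e^{C/\ov{\tau}}$ comes from, and why $\eta$ must be chosen after $\ov{\tau}$.
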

\begin{proof}
First, we assume properties (1)-(4).	 Let $\Omega_{\ov{\tau}}\subset T_pX$ be the open set such that the $\mathcal L$-exponential map $\mathcal L \exp_{p,\ov{\tau}}$ restricted to $\Omega_{\ov{\tau}}$ gives a diffeomorphism onto $W^{\rm reg}$, which is an open subset of $W$ such that  $W\setminus W^{\rm reg}$ has measure zero. Moreover we know that for a point $q'\in W^{\reg}$, there exists a unique $\mathcal L$-geodesic from $(p,0)_T$ to $(q',\ov{\tau})_T$. Let $\Omega_{\ov{\tau}}' \subset \Omega_{\ov{\tau}}$ denote the subset consisting of those initial tangent vectors for which the associated $\mathcal L$-geodesic has more than $\eta^{-\varepsilon}$ $\eta$-events with respect to the family $\{U_\eta\}$. By the continuous dependence of $\mathcal L$-geodesics on their initial data, $\Omega_{\ov{\tau}}'$ is an open subset of $\Omega_{\ov{\tau}}$.

	 For any $v\in \Omega_{\ov{\tau}}'$, there exists a union of open intervals $I_v\in [0,\ov{\tau}]$ such that $\mathcal L \exp_{p,\tau}(v)\in \tilde U_{\eta}$ exactly when $\tau\in I_v$. By Lemma \ref{lem--lower bound of first bad time}, for any $v\in \Omega_{\ov{\tau}}'$, we have
	\begin{equation}\label{control of interval}
		I_v\subset[C^{-1}\ov{\tau}, \ov{\tau}].
	\end{equation}We consider
	\begin{equation}
		\mathcal T=\{(v, \tau)\mid v\in \Omega_{\ov{\tau}}',  \tau \in I_v\}\subset T_pX\times [0,\ov{\tau}].
	\end{equation}Since $I_v$ depends continuously on $v$, the set $\mathcal T$ is a measurable set and in the following, we can use Fubini's theorem on $\mathcal T$.
We have a smooth injective map
	\begin{equation}
		\mathcal L\exp_p : \mathcal T\rightarrow \tilde U_\eta\times [C^{-1}\ov{\tau},\ov{\tau}],
	\end{equation}which maps $(v,\tau)$ to $(\mathcal L \exp_{p,\tau}(v),\tau)$. Recall Perelman's monotonicity \cite{perelman},
	\begin{equation}\label{jacob2}
J(v,\tau)\geq \left(\frac{\tau}{\ov{\tau}}\right)^me^{\ell(v,\tau)-\ell(v,\ov{\tau})}J(v,\ov{\tau}),
\end{equation}where $J(v,\tau)$ denotes the Jacobian of $\mathcal L\exp_{p,\tau}$ and
\begin{equation}
\ell(v,\tau):=\frac{1}{2\sqrt{\tau}}L_T(\mathcal{L}\exp_{p,\tau}(v),\tau),
\end{equation} is Perelman's reduced length. Then combining this
with the upper bound on the $\mathcal L$-distance \eqref{eq--upper bound of ldistance} and Lemma \ref{lem--lower bound of first bad time}, we obtain
\begin{equation}
	J(v,\tau)\geq C^{-1}e^{-\frac{C}{\ov{\tau}}}J(v,\ov{\tau}).
\end{equation}We compute
	\begin{equation}\label{eq--fubini}
		\begin{aligned}
		\int_{C^{-1}\ov{\tau}}^{\ov{\tau}}\vol(\tilde U_\eta,\tilde \omega(\tau)^n)d\tau &\geq \int_{\Omega_{\ov{\tau}}'}\int_{I_v}J(v,\tau)d\tau dv\\
		&\geq C^{-1}e^{-\frac{C}{\ov{\tau}}}\int_{\Omega_{\ov{\tau}}'}\int_{I_v}J(v,\ov{\tau})d\tau dv\\
		&\geq C^{-1}e^{-\frac{C}{\ov{\tau}}} \min_{v\in \Omega_{\ov{\tau}}'}|I_v| \vol(\mathcal L\exp_{p,\ov{\tau}}(\Omega_{\ov{\tau}}'),\tilde\omega(\ov{\tau})^n).
		\end{aligned}	\end{equation}
		We want to have a lower bound for $|I_v|$. Since we have at least $\eta^{-\ve}$ $\eta$-events and the $d_{\can}$-length of a curve for each $\eta$-event is at least $\eta/2$ (by property (3)), then by \eqref{eq--comparable}, we obtain that for $T$ sufficiently large (depending on $\eta$) and $\bar\tau\in (0,1/10]$, we get the following
		\begin{equation}\label{korko}
			\eta^{1-\epsilon}\leq 4\int_{I_v}|\partial_\tau \gamma|_{\tilde g(\tau)}d\tau.
		\end{equation}
		Using Cauchy-Schwarz and the  upper bound of the $\mathcal L$-distance \eqref{eq--upper bound of ldistance}, we get
		\begin{equation}
			\begin{aligned}
\int_{I_v}\left|\partial_\tau \gamma\right|_{\tilde{g}(\tau)} d \tau & \leqslant  C\ov{\tau}^{-\frac{1}{4}} \int_{I_v} \tau^{\frac{1}{4}}\left|\partial_\tau \gamma\right|_{\tilde{g}(\tau)} d \tau \\
& \leqslant  C\ov{\tau}^{-\frac{1}{4}}\left(\int_{I_v} \sqrt{\tau}\left|\partial_\tau \gamma\right|_{\tilde{g}(\tau)}^2 d \tau\right)^{\frac{1}{2}}|I_v|^{\frac{1}{2}}\\
&\leq C\bar\tau^{-1/2}|I_v|^{\frac12},
\end{aligned}
		\end{equation} and together with \eqref{korko} we see that
		\begin{equation}\label{eq--lower bound of intervals}
			|I_v|\geq C^{-1}\ov{\tau}\eta^{2(1-\epsilon)}.
		\end{equation}
Plugging this back into \eqref{eq--fubini}, we get
\begin{equation}\label{fut2}\begin{aligned}
\vol(\mathcal L\exp_{p,\ov{\tau}}(\Omega_{\ov{\tau}}'),\tilde\omega(\ov{\tau})^n)&\leq Ce^{\frac{C}{\ov{\tau}}}\ov{\tau}^{-1}\eta^{-2(1-\ve)}\int_{C^{-1}\ov{\tau}}^{\ov{\tau}}\vol(\tilde U_\eta,\tilde \omega(\tau)^n)d\tau\\
&\leq Ce^{\frac{C}{\ov{\tau}}}\eta^{-2(1-\ve)}\vol(\tilde U_\eta, \omega(T)^n).
\end{aligned}
\end{equation}
But using \eqref{fut}, \eqref{volscal} and assumption (2) we can bound
\begin{equation}\label{ut}\begin{aligned}
\vol(\tilde U_\eta, \omega(T)^n)&\leq C\vol(X, \omega(T)^n)\vol(U_\eta,\omega_{\rm can}^m)\\
&\leq Ce^{-(n-m)T}\eta^{2-\rho},
\end{aligned}
\end{equation}
and inserting this into \eqref{fut2} we obtain that
\begin{equation}\label{fut3}
	\vol(\mathcal L\exp_{p,\ov{\tau}}(\Omega_{\ov{\tau}}'),\tilde\omega(\ov{\tau})^n)<\Psi(\eta|\bar\tau,\delta) e^{-(n-m)T}.
\end{equation}By the estimate on the volume form \eqref{volscal} and the initial assumption on $W$ \eqref{positive volume}, we know that for some $c>0$ independent of $T$,
\begin{equation}\vol(W,\omega(T)^n)\geq ce^{-(n-m)T}.
\end{equation}
Therefore, if we choose $\eta$ sufficiently small, we have thus proved that there exists a closed subset  $\Omega\subset W$ with volume
\begin{equation}
	\vol(\Omega,\omega(T)^n)\geq \frac{1}{2}\vol(W, \omega(T)^n),
\end{equation}such that for any $q'\in \Omega$, there exists a minimizing
 $\mathcal L$-geodesic from $(p,0)_T$ to $(q', \ov{\tau})_T$ which has at most $\eta^{-\epsilon}$ $\eta$-events with respect to $\{U_\eta\}$, as desired.

Now, we assume (1),(2'),(3),(4), and we run the same argument we just did, taking now $\Omega_{\ov{\tau}}' \subset \Omega_{\ov{\tau}}$ denote the subset consisting of those initial tangent vectors for which the associated $\mathcal L$-geodesic has at least one $\eta$-event with respect to the family $\{U_\eta\}$. Then \eqref{fut2} above changes to
\begin{equation}
\vol(\mathcal L\exp_{p,\ov{\tau}}(\Omega_{\ov{\tau}}'),\tilde\omega(\ov{\tau})^n)\leq Ce^{\frac{C}{\ov{\tau}}}\eta^{-2}\vol(\tilde U_\eta, \omega(T)^n),
\end{equation}
while thanks to (2') the estimate \eqref{ut} changes to
\begin{equation}
\vol(\tilde U_\eta, \omega(T)^n)\leq Ce^{-(n-m)T}\eta^{3-\rho},
\end{equation}
for some $0<\rho<1$, which again gives the same estimate as in \eqref{fut3}. We thus get $\Omega\subset W$ as above such that for any $q'\in \Omega$, there exists a minimizing
 $\mathcal L$-geodesic from $(p,0)_T$ to $(q', \ov{\tau})_T$ which has no $\eta$-events with respect to $\{U_\eta\}$, i.e. it is disjoint from $\ti{U}_{\eta/2}$, as desired.
\end{proof}

\subsection{Integrability of $\omega_{\can}$ on the almost regular set}
Recall that $Y^{\rm reg}$ denotes the points where the variety $Y$ is smooth, and by definition we have $Y^{\circ}\subset Y^{\reg}$.
Following \cite{LiuSz},  we define the almost regular set as follows: given $\epsilon>0$, let
\begin{equation}\label{reg}
	\mathcal R_\epsilon:=\left\{y\in Y\bigg| \lim_{r\rightarrow 0}\frac{\vol(B^{d_{\can}}(y,r),\omega_{\rm can}^m)}{\omega_{2m}r^{2m}}>1-\epsilon\right\}.
\end{equation}
Clearly $\mathcal R_\epsilon$ is open and for $\epsilon'<\epsilon$, we have
\begin{equation}
	\mathcal R_{\epsilon'}\subset \mathcal R_{\epsilon}.
\end{equation}

The following result is proved in \cite{Sz,LiuSz}, building on the earlier work in \cite{DS1,CDS2}.  Note that as discussed in \cite[Section 2]{Sz}, by using the argument in \cite{CDS2,LiuSz}, we know that for any iterated tangent cone $V$ of $Y$, the singular set $V\setminus \mathcal R_{\epsilon}(V)$ has capacity zero. Therefore the results in \cite[Section 4]{LiuSz} can be applied to $(Y, d_{\can})$.

\begin{theorem}[\cite{LiuSz,Sz}]\label{thm--local holder}
	There exists $\epsilon_1>0$ such that
	\begin{equation}
		\mathcal R_{\epsilon_1} \subset Y^{\reg}.
	\end{equation}Moreover for each $y\in Y^{\reg}$, there exists $C=C(Y,y)$ and $\alpha=\alpha(Y,y)>0$ and holomorphic coordinates $z=(z_1\cdots,z_m)$ around $y$ such that
	\begin{equation}\label{eq--distance comparable}
		C^{-1}|z|\leq d_{\can}(y,\cdot)\leq C |z|^{\alpha}.
	\end{equation}
\end{theorem}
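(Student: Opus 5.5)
The plan is to obtain both assertions from the almost-rigidity theory for the non-collapsed space $(Y,d_{\can})$ of Theorem \ref{thm--rcd}, together with the partial $C^0$ theory of Donaldson--Sun and Liu--Sz\'ekelyhidi. The key input is the capacity-zero property of $V\setminus\mathcal R_\epsilon(V)$ for every iterated tangent cone $V$, recalled above. This property lets us run the H\"ormander $L^2$-technique on the rescalings $(Y,k\omega_{\can},y)$, in the same way as in the proof of Theorem \ref{thm--uniform bound on bergman kernel}.

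\textbf{The inclusion $\mathcal R_{\epsilon_1}\subset Y^{\reg}$.} Fix $y\in\mathcal R_{\epsilon_1}$. By volume convergence for non-collapsed RCD spaces, every tangent cone at $y$ has volume ratio $>1-\epsilon_1$. If $\epsilon_1$ is small, volume almost-rigidity (the RCD version of Colding and Cheeger--Colding) forces these cones to be $\Psi(\epsilon_1)$-close to $\mathbb C^m$. We then rescale, $k_i\omega_{\can}$ with $k_i\to\infty$, so that $(Y,k_i\omega_{\can},y)$ is Gromov--Hausdorff close to the ball $B(0,1)\subset\mathbb C^m$. On the rescaled space we take the Gaussian-cutoff sections $e^{-|z|^2/2}$ and $z_j e^{-|z|^2/2}$, transplanted to peak sections of $k_iL$ near $y$ by H\"ormander's estimate. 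This is exactly the machinery of \cite[Proposition 3.1]{LiuSz}, and the cutoffs are available because of the capacity condition. Taking ratios $s_j/s_0$ gives $m$ holomorphic functions $F=(F_1,\dots,F_m)$ on a $d_{\can}$-neighborhood of $y$ that are $\Psi(\epsilon_1)$-Gromov--Hausdorff approximations at scale $k_i^{-1/2}$. A degree argument, or a direct injectivity argument as in \cite[Section 4]{LiuSz}, then shows that $F$ is a finite holomorphic map that is homeomorphic onto an open subset of $\mathbb C^m$. Because $Y$ is normal, $F$ is biholomorphic, so $y\in Y^{\reg}$.

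\textbf{The estimate \eqref{eq--distance comparable}.} For $y\in Y^{\reg}$ we want to avoid invoking almost-regularity at $y$ itself, since $y$ may well lie in $D$. Instead we work with the coordinates $z$ produced by the peak-section construction at a fixed scale, which are H\"older/Lipschitz with respect to $d_{\can}$.
\begin{itemize}
\item \emph{Lower bound.} Holomorphic sections have gradient bounds from the RCD structure: Cheng--Yau gradient estimates together with the $L^\infty$ density-of-states bound of Theorem \ref{thm--uniform bound on bergman kernel}, as in \cite[Proposition 19]{Sz2}. These show that $|z|$ is Lipschitz in $d_{\can}$, which gives $C^{-1}|z|\leq d_{\can}(y,\cdot)$.
\item \emph{Upper bound.} The points $y\in Y^{\reg}$ where $d_{\can}$-balls are not $\Psi$-close to Euclidean can be handled by the quantitative iteration of \cite[Section 4]{LiuSz}. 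At each scale $2^{-j}$ the tangent cone structure yields a definite drop of $|z|$ by a factor. Iterating gives $d_{\can}(y,x)\leq C|z(x)|^\alpha$, with $\alpha$ depending on the number of scales needed.
\end{itemize}

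\textbf{Alternative route for the upper bound.} The H\"older bound could instead be derived from Theorem \ref{zatta}, transported from $\omega_Y$ to local holomorphic coordinates near $y\in Y^{\reg}$. Since $\omega_Y$ is smooth near such a $y$, $d_Y$ is comparable to $|z|$ there, so Theorem \ref{zatta} directly gives $d_{\can}\leq C|z|^{\alpha/2}$. I would use this route, which reduces the upper bound in \eqref{eq--distance comparable} to results already proved. The remaining, genuinely hard step is then the inclusion $\mathcal R_{\epsilon_1}\subset Y^{\reg}$. Its main obstacle is constructing the peak sections on the singular space, and this is exactly where the capacity-zero condition on the tangent cones is essential.
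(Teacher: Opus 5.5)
Your argument for the inclusion $\mathcal R_{\epsilon_1}\subset Y^{\reg}$ is the paper's own. The paper observes that, thanks to the capacity-zero property of $V\setminus\mathcal R_\epsilon(V)$ for iterated tangent cones, \cite[Section 4]{LiuSz} applies to $(Y,d_{\can})$; your peak-section sketch unpacks that citation. The substance lies in the step you delegate to ``a degree argument or a direct injectivity argument''. A $\Psi(\epsilon_1)$-Gromov--Hausdorff approximation at the single scale $k_i^{-1/2}$ does not make $F$ injective. You need almost-Euclidean balls at all smaller scales around all points near $y$, which Bishop--Gromov monotonicity provides. These must then be fed into either the three-annulus renormalization behind \eqref{eq--lower vanishing order} or a Reifenberg-plus-degree count. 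A small correction: it is normality of the target $\mathbb C^m$ (Riemann extension for $F^{-1}$) that upgrades a holomorphic homeomorphism to a biholomorphism. Normality of $Y$ is what you need earlier, so that sections built on $Y^{\reg}$ extend across $Y\setminus Y^{\reg}$.

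For the distance comparison you take a genuinely different route from the paper, which again just cites \cite[Section 4]{LiuSz}. There the exponent comes from the growth-rate analysis at $y$. Your route is valid and not circular: Theorem \ref{zatta} is proved in Section \ref{sec-holder}, and none of its ingredients depends on the present theorem. Take $r=\frac12 d_Y(y,Y\setminus Y^{\reg})>0$. Points near $y$ lie in $Y_{r/2}$ and satisfy $d_Y(y,\cdot)\le C|z|$ in a coordinate chart, so $d_{\can}(y,\cdot)\le C|z|^{\alpha/2}$. For the lower bound your Cheng--Yau/density-of-states argument works, but \eqref{eq--holder estimate} gives it at once: $|z|\le Cd_Y\le Cd_{\can}$. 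That estimate depends only on \eqref{schwarz} and \eqref{conv}, so it is also independent of the theorem. Your route buys a H\"older exponent $\alpha/2$ independent of $y$, with only $C$ depending on the distance to $Y\setminus Y^{\reg}$. It also needs no tangent cone analysis at points of $D\cap Y^{\reg}$ that are not almost regular. The paper's citation buys brevity and obtains both halves of the theorem from one source.
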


The main result in this section is the following.
\begin{lemma}\label{lem--high integrability}
	For any $p>0$, there exists $\epsilon=\epsilon(p)$ such that
	\begin{equation}
		\frac{\omega_{\can}^m}{\omega_Y^m}\in L_{\loc}^p(\mathcal R_{\epsilon},\omega_Y^m).
	\end{equation}
\end{lemma}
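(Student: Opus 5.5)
The plan is to exploit the Monge-Amp\`ere equation \eqref{makrf}, which on $Y^\circ$ reads $\omega_{\can}^m=e^{\vp}f_*(\mathcal{M})$, with $\vp$ bounded by \eqref{eq--bound on potential}. So it suffices to show that $G:=f_*(\mathcal M)/\omega_Y^m$ lies in $L^p_{\loc}(\mathcal R_\epsilon)$ for $\epsilon=\epsilon(p)$ small. First take $\epsilon\le\epsilon_1$, so that $\mathcal R_\epsilon\subset Y^{\reg}$ by Theorem \ref{thm--local holder}. We may then fix $y\in\mathcal R_\epsilon$ and holomorphic coordinates $z$ on a ball $B\ni y$ in $Y^{\reg}$, where $\omega_Y^m$ is comparable to Lebesgue measure $dV_z$; it is enough to prove $G\in L^p(B')$ for a smaller ball $B'\ni y$.

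\textbf{Step 1 (structure of $G$).} On $B\cap Y^\circ$, write $\ell K_X$ locally as $f^*(\ell L)$. Then $\mathcal M$ is locally $|\Omega\wedge f^*dz|^{2}$ up to a smooth positive factor, where $\Omega$ is a local relative holomorphic $(n-m)$-form. Hence $G=e^{h}\int_{X_z}i^{(n-m)^2}\Omega\wedge\ov\Omega$ with $h$ smooth. The standard computation gives $-\ddbar\log\int_{X_z}\Omega\wedge\ov\Omega=\omega_{\rm WP}\ge0$. Therefore $\psi:=-\log G$ is plurisubharmonic up to a smooth function on $B\cap Y^\circ$. Using that $G$ is locally bounded below on $Y^{\reg}$ (the fiber integrals do not degenerate to $0$, since $\mathcal M$ is a smooth positive volume form and $f$ is surjective), $\psi$ is locally bounded above near $D\cap B$. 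By the Riemann extension theorem for psh functions it therefore extends across the analytic set $D\cap B$ as a quasi-psh function on $B$. Thus $\omega_{\can}^m= e^{\vp+h-\psi}dV_z$ with $\psi$ quasi-psh on $B$.

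\textbf{Step 2 (small Lelong numbers from almost-Euclidean volume).} By Skoda's integrability theorem, $e^{-p\psi}\in L^1_{\loc}$ near $y$ provided the Lelong numbers satisfy $\nu(\psi,x)<2/p$ for all $x$ in a neighborhood of $y$. To bound $\nu(\psi,x)$, I would use that near any $x\in\mathcal R_\epsilon$ all tangent cones (and all rescalings at small scales) are $\Psi(\epsilon)$-close to $\mathbb C^m$. By the Liu--Sz\'ekelyhidi construction of holomorphic charts underlying Theorem \ref{thm--local holder}, the coordinates can be chosen so that $d_{\can}(x,\cdot)$ is comparable to $|z-z(x)|^{1\pm\Psi(\epsilon)}$ at all small scales, uniformly for $x$ in a compact neighborhood of $y$. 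This is the scale-by-scale refinement of \eqref{eq--distance comparable}: at each scale the rescaled chart is a $\Psi(\epsilon)$-GH approximation, so consecutive scale factors are $2^{1\pm\Psi(\epsilon)}$. Combined with the volume bound from the RCD$(-1,2m)$ property (Bishop--Gromov), this yields
\[
\int_{\{|z-z(x)|<s\}}e^{-\psi}dV_z\le C\,\vol\big(B^{d_{\can}}(x,Cs^{1-\Psi(\epsilon)}),\omega_{\can}^m\big)\le C s^{2m-\Psi(\epsilon)} .
\]
On the other hand, if $\nu:=\nu(\psi,x)$, then $\psi(z)\le\nu\log|z-z(x)|+C$, so the left-hand side is at least $c\,s^{2m-\nu}$. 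Letting $s\to0$ gives $\nu(\psi,x)\le\Psi(\epsilon)$ for all $x$ near $y$.

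\textbf{Step 3 (conclusion).} Choose $\epsilon(p)\le\epsilon_1$ with $\Psi(\epsilon)<2/p$. Skoda's theorem then gives $e^{-p\psi}\in L^1$ near $y$, hence $\omega_{\can}^m/\omega_Y^m\le Ce^{-\psi}\in L^p_{\loc}(\mathcal R_\epsilon)$.

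The main obstacle is Step 2: obtaining the \emph{uniform, multi-scale} comparison $d_{\can}\approx|z|^{1\pm\Psi(\epsilon)}$ around every point of a neighborhood, with exponent tending to $1$ as $\epsilon\to0$, rather than the fixed $\alpha$ of \eqref{eq--distance comparable}. I would extract it by iterating the Liu--Sz\'ekelyhidi almost-Euclidean chart construction across dyadic scales, using the capacity-zero property of singular sets of iterated tangent cones recalled before Theorem \ref{thm--local holder}. A secondary technical point is the local lower bound on $G$ needed to extend $\psi$ across $D$.
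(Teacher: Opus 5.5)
Your proposal is correct, but it bounds the Lelong number by a different mechanism than the paper.

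\textbf{The psh function is the same.} The paper uses $\log|df_1\wedge\cdots\wedge df_m|_{g_{\rm can}}+\varphi$ for an almost-Euclidean holomorphic chart $F$, with plurisubharmonicity coming from $\Ric(\omega_{\can})\geq-\omega_{\can}$. You take $\psi=-\log(f_*\mathcal M/\omega_Y^m)$ and get plurisubharmonicity from the Monge--Amp\`ere equation and $\omega_{\rm WP}\geq0$. Twice the paper's function and your $\psi$ differ by $\log|\det\partial F/\partial z|^2$ plus bounded terms, so near $y$ they have the same Lelong numbers. One small correction: the lower bound on $G$ comes from $f^*\omega_Y\leq C\omega_0$ and constancy of fiber volume, or equivalently from $\omega_{\can}\geq C^{-1}\omega_Y$. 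It does not come from surjectivity of $f$.

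\textbf{The real difference is Step 2.} The paper combines the three-annulus bound $|c_j(r)|\leq Cr^{-\Psi(\epsilon)}$ with the Cheeger--Colding $L^1$ Jacobian estimate to get $\sup_{B(y,r)}|dF|_{g_{\rm can}}\geq c\,r^{\Psi(\epsilon)}$. That is a lower bound on the sup of the psh function over small balls. It bounds the Lelong number directly and needs only the easy Lipschitz direction $C^{-1}|z|\leq d_{\can}$ of \eqref{eq--distance comparable}.

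You argue by volume instead: a Lelong number $\nu$ forces $\omega_{\can}^m\gtrsim|z-z(x)|^{-\nu}dV_z$, and Bishop--Gromov caps this. This avoids the Jacobian estimate. The price is that you need the hard direction $d_{\can}(x,\cdot)\leq C|z-z(x)|^{1-\Psi(\epsilon)}$, with exponent tending to $1$. The fixed $\alpha$ of Theorem \ref{thm--local holder} would only give $\nu\leq 2m(1-\alpha)$.

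\textbf{Your refinement does follow from the paper's inputs.} Suppose the renormalized chart $F'=A_rF$ is a $\Psi(\epsilon)r$-GH approximation on $B(x,r)$, with $\|A_r\|\leq Cr^{-\Psi(\epsilon)}$ coming from the bound on $c_j(r)$. Then for $d_{\can}(x,w)\in[r/2,r]$ you get $|F(w)|\geq\|A_r\|^{-1}|F'(w)|\geq c\,r^{1+\Psi(\epsilon)}$. This must be obtained by propagating the span of a single chart through all scales via the three-annulus lemma (\cite[Lemma 2.4]{LiuSz}, \cite[Proposition 3.7]{DS2}). Re-running the chart construction independently at each scale would give charts unrelated to your fixed $z$.

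\textbf{Uniformity is unnecessary.} Skoda's theorem only needs $\nu(\psi,y)<2/p$ at the point $y$ itself, so you do not need the uniformity in $x$ that you worry about.
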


\begin{proof}
	This basically is contained in \cite[P.268--270]{LiuSz}. We follow their argument closely.	
	
For any small $\epsilon>0$, there exists $\epsilon'\ll \epsilon$ such that for any $y\in \mathcal R_{\epsilon'}$ the ball $B\left(y, \frac{1}{\epsilon}\right)$ (with the distance $d_{\rm can}$) is $\epsilon$-Gromov-Hausdorff close to a ball in $\mathbb{C}^m$,  by scaling. Then we can find a holomorphic map $F=\left(f_1, \ldots, f_m\right)$ on $B(y, 100)$ which gives a $\Psi(\epsilon )$-Gromov Hausdorff approximation onto its image in $\mathbb{C}^m$ and defines a holomorphic chart on $B(y, 1)$. Moreover, we may assume $f_j(y)=0, \int_{B(p, 1)} f_j \overline{f_k}=0$ for $j \neq k$. We have

\begin{equation}
\frac{f_{B(y, 2)}\left|f_j\right|^2}{f_{B(y, 1)}\left|f_j\right|^2} \leq 4+\Psi(\epsilon).
\end{equation}

Using a three annulus type argument and the fact that the tangent cone at $y$ is close to $\mathbb C^m$ (see \cite[Lemma 2.4]{LiuSz} and \cite[Proposition 3.7]{DS2}), we see that for any $0<r<1$, we have
\begin{equation}\label{eq--lower vanishing order}
	4-\Psi(\epsilon) \leq \frac{f_{B(y, 2 r)}\left|f_j\right|^2}{f_{B(y, r)}\left|f_j\right|^2} \leq 4+\Psi(\epsilon).
\end{equation}
For any given $r>0$, we can choose the functions $f_1, \ldots, f_m$ so that they are orthogonal simultaneously with respect to the $L^2$ inner products on $B(y, 1)$ and $B(y, r)$. Now let $c_j=c_j(r)$ be the constants so that $\sup _{B(y, r)}\left|c_j f_j\right|=r$. By \eqref{eq--lower vanishing order}, we know that
\begin{equation}
	|c_j(r)|\leq Cr^{-\Psi(\epsilon)}.
\end{equation}
Define $f_j^{\prime}=c_j f_j$. Then as in \cite{LiuSz} we see that $F^{\prime}=\left(f_1^{\prime}, \cdots f_m^{\prime}\right)$ is a $\Psi(\epsilon) r$-Gromov-Hausdorff approximation to a ball in $\mathbb{C}^m$ and an estimate of Cheeger-Colding implies that
\begin{equation}
\fint_{B(y, r)} ||d f_1^{\prime} \wedge d f_2^{\prime} \wedge \ldots \wedge d f_m^{\prime}|_{g_{\rm can}}-1|=\Psi(\epsilon),
\end{equation}
and in particular, we have
\begin{equation}
	\sup _{B(y, r)}\left|d f_1^{\prime} \wedge d f_2^{\prime} \wedge \ldots \wedge d f_m^{\prime}\right|_{g_{\rm can}} \geq c(n)>0.
\end{equation}
Therefore we have
\begin{equation}\label{eq--lower bound}
	\sup_{ B(y,r)} \left|d f_1 \wedge d f_2 \wedge \ldots \wedge d f_m\right|_{g_{\rm can}} \geq c(n) r^{\Psi(\epsilon)}.
\end{equation}
Since near any point of $Y$ we can write locally $\omega_{\can}=\ddbar\vp$ for some continuous potential $\varphi$, and $\Ric(\omega_{\can})\geq -\omega_{\can}$ on $Y^\circ$, we have that
\begin{equation}\label{22}
	\log |d f_1 \wedge d f_2 \wedge \ldots \wedge d f_m|_{g_{\rm can}}+\varphi
\end{equation}is a psh function on $Y^\circ$ near $y$. Furthermore, as a consequence of the Schwarz Lemma estimate \eqref{schwarz} together with the convergence in \eqref{conv} we have that $\omega_{\rm can}\geq C^{-1}\omega_Y$ on $Y^\circ$, hence the function $\frac{\omega_Y^m}{\omega_{\can}^m}$ is bounded above on $Y^\circ.$
This, together with the simple observation that
\begin{equation}\label{33}
\log |d f_1 \wedge d f_2 \wedge \ldots \wedge d f_m|_{g_{\rm can}}+\varphi\leq \log \frac{\omega_Y^m}{\omega_{\can}^m}+C,
\end{equation}
shows that the psh function in \eqref{22} is locally bounded above near $D\cap Y^{\rm reg}$, and so it extends to a psh function near $y$ on $Y^{\rm reg}$, which we denote with the same name. By \eqref{eq--lower bound} and the local distance estimate \eqref{eq--distance comparable}, we know that the Lelong number of this psh function at $y$ is bounded above by $\Psi(\epsilon)$, i.e.,
\begin{equation}\label{eq--lelong number upper bound}
	\nu(\log |d f_1 \wedge d f_2 \wedge \ldots \wedge d f_m|_{g_{\rm can}}+\varphi, y)\leq \Psi(\epsilon).
\end{equation}
Then for any given $p$, we can choose $\epsilon$ sufficiently small such that $p\Psi(\epsilon)\leq \frac{1}{2}$, where $\Psi(\epsilon)$ denotes the right-hand side of \eqref{eq--lelong number upper bound}. This then determines $\epsilon'$ and by the above argument, we know that for $y\in \mathcal R_{\epsilon'}$, we have
\begin{equation}
	\nu\left(\log |d f_1 \wedge d f_2 \wedge \ldots \wedge d f_m|_{g_{\rm can}}+\varphi,y\right)\leq \frac{1}{2p}.
\end{equation} Skoda's integrability theorem, together with \eqref{33}, then shows that
 $\frac{\omega_{\can}^m}{\omega_Y^m}$ is in $L^p(\omega_Y^m)$ in a neighborhood of $y$.
	\end{proof}

\subsection{The choice of the regions}
First, we recall that as a well-known and simple consequence of the Schwarz Lemma estimate \eqref{schwarz} together with the convergence in \eqref{conv}, we have
\begin{equation}\label{eq--holder estimate}
	d_Y\leq Cd_{\can},
\end{equation}
on the whole of $Y$.

Since \(\omega_{\can}\) satisfies a uniform lower bound on the volume-ratio, there exists
\(\gamma_0>0\) such that any metric cone of the form
\[
	\mathbb{R}^{2m-2}\times C(S^1_\gamma)
\]
arising as a rescaled limit of \((Y,\lambda_j\omega_{\can},p_j)\) for some $\lambda_j\rightarrow \infty$ and $p_j\in Y$, must satisfy
\(\gamma\geq \gamma_0\). A standard contradiction argument, together with
\cite[Proposition 11]{Sz} implies the following result, which is the RCD analogue of
\cite[Proposition 3.2]{LiuSz} for Ricci limit spaces.
\begin{proposition}\label{savior}
	There exists $\epsilon_0>0$ so that the following holds. Assume that for some $x\in Y$ and some $\ve\leq\ve_0$ we have
$$d_{\rm GH}\left(B\left(x, \epsilon^{-1}\right), B_V\left(o, \epsilon^{-1}\right)\right)<\epsilon,$$
for some metric cone $(V, o)$ that splits off an $\mathbb{R}^{2m-2}$-factor. Then $x\in Y^{\reg}$.
\end{proposition}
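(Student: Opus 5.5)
The plan is a contradiction argument, modeled on \cite[Proposition 3.2]{LiuSz}, with \cite[Proposition 11]{Sz} playing the role of the $\epsilon$-regularity input. Suppose no such $\epsilon_0$ exists. Then there are $\epsilon_j\to 0$, points $x_j\in Y\setminus Y^{\reg}$, and metric cones $(V_j,o_j)$, each splitting off $\mathbb R^{2m-2}$, such that
\begin{equation*}
d_{\rm GH}\bigl(B(x_j,\epsilon_j^{-1}),B_{V_j}(o_j,\epsilon_j^{-1})\bigr)<\epsilon_j .
\end{equation*}
A cone splitting off $\mathbb R^{2m-2}$ has the form $\mathbb R^{2m-2}\times C(Z_j)$ with $C(Z_j)$ a two-dimensional cone. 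Since $(Y,d_{\can})$ is a non-collapsed $\rcd(-1,2m)$ space with the uniform volume-ratio bound \eqref{kazzo2}, the limiting cones are non-collapsed $\rcd(0,2m)$, so $C(Z_j)=C(S^1_{\gamma_j})$ with $\gamma_j\le 2\pi$. The scale $\epsilon_j^{-1}$ is large, so these are genuinely small-scale pictures of $Y$: after rescaling, $(Y,\lambda_j d_{\can},x_j)$ with $\lambda_j\to\infty$ is GH-close to $V_j$ on balls of radius $\to\infty$. The choice of $\gamma_0$ then gives $\gamma_j\ge\gamma_0$ in the limit. Passing to a subsequence, $\gamma_j\to\gamma_\infty\in[\gamma_0,2\pi]$, and $(Y,\lambda_j d_{\can},x_j)$ converges in the pointed GH sense to $\mathbb R^{2m-2}\times C(S^1_{\gamma_\infty})$.

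The next step is to show that $x_j\in Y^{\reg}$ for large $j$, which gives the contradiction. For this I would invoke \cite[Proposition 11]{Sz}. As I understand it, this is the algebro-geometric statement that if a rescaled ball of $(Y,d_{\can})$ is GH-close to $\mathbb R^{2m-2}\times C(S^1_\gamma)$ with $\gamma$ bounded below, then the point is a smooth point of $Y$. Its proof runs by constructing holomorphic functions via H\"ormander $L^2$-techniques on the RCD/K\"ahler space: the coordinates $z_1,\dots,z_{m-1}$ along the Euclidean factor, together with $w^{2\pi/\gamma}$-type functions on the cone factor, give a local holomorphic chart, and normality of $Y$ upgrades this to smoothness. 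The uniform lower bound $\gamma\ge\gamma_0$ is what makes the constants in that proposition uniform along the sequence. Because I expect the proposition to be stated with constants depending on a lower bound for the cone angle, I would apply it with angle bound $\gamma_0/2$ for all large $j$.

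If \cite[Proposition 11]{Sz} is stated only for exact tangent cones rather than for almost-cones at a fixed finite scale, I would add one further step. A version of the three-annulus/monotonicity argument, as in \cite{LiuSz}, would show that GH-closeness to $V_j$ at scale $\epsilon_j^{-1}$ propagates to closeness to some cone of the same type at every smaller scale around $x_j$. In particular, every tangent cone at $x_j$ would be of the form $\mathbb R^{2m-2}\times C(S^1_\gamma)$ with $\gamma\ge\gamma_0$. This propagation step, which has to hold uniformly in $j$, is the main obstacle I anticipate. I would handle it by the standard cone-rigidity contradiction argument for non-collapsed RCD spaces, using volume convergence and the fact that the volume ratio is almost monotone in the almost-cone regime.
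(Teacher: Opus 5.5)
Your main line is the paper's argument. The paper also argues by contradiction, uses the uniform volume-ratio lower bound to get $\gamma_0$, passes to a subsequence so that $(Y,\lambda_j d_{\can},x_j)$ converges to $\mathbb{R}^{2m-2}\times C(S^1_{\gamma_\infty})$ with $\gamma_\infty\geq\gamma_0$, and concludes from \cite[Proposition 11]{Sz} that $x_j\in Y^{\reg}$ for large $j$. The paper's wording (a contradiction argument \emph{together with} \cite[Proposition 11]{Sz}) suggests that this result is used in qualitative form. It is a statement about pointed rescaled limits with $\lambda_j\to\infty$ and moving base points, the same kind of limits used to define $\gamma_0$. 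So you should apply it to the convergent sequence you have built; the contradiction argument is what turns it into a uniform $\epsilon_0$. If it were already the finite-scale statement you describe, the proposition would follow from it almost directly.

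The fallback you propose, in case \cite[Proposition 11]{Sz} only concerns tangent cones, would not work, and you do not need it. First, your limit is taken along moving base points $x_j$, so it is not a tangent cone at any point, and a pointwise tangent-cone statement is not what the contradiction argument feeds into.

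Second, closeness to $\mathbb{R}^{2m-2}\times C(S^1_\gamma)$ at one scale does not propagate to smaller scales. Bishop--Gromov only says the volume ratio is nondecreasing as the scale shrinks; there is no pinching, so almost-volume-cone rigidity says nothing below the given scale. At best such arguments would show that tangent cones at $x_j$ are \emph{close} to split cones, never that they split exactly, which is what a tangent-cone version would require.

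Concretely, take $m=2$ and the flat cone $C(S^1_\gamma)\times C(S^1_{2\pi-\delta})$, a non-collapsed $\rcd(0,4)$ space. If $\delta\ll\epsilon^2$, it is $\epsilon$-close on $B(o,\epsilon^{-1})$ to $\mathbb{C}\times C(S^1_\gamma)$, because rescaling angles in the second factor has distortion $O(\delta\epsilon^{-1})$. Yet its only tangent cone at $o$ is itself, and it splits off no line. So no argument using only non-collapsed RCD almost-rigidity can give the propagation you claim.

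This example is also a flat K\"ahler metric on $\mathbb{C}^2$ with cone angles along the axes, so $o$ is a smooth point. The conclusion of the proposition can hold even when the tangent cone does not split. The regularity has to come from the complex-analytic input at the scale of the almost-cone, applied along the sequence, not from information about tangent cones at $x_j$.
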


 As a consequence, we show that \(Y\setminus Y^{\reg}\) is contained in a quantitative singular set, consisting of  points whose centered balls are not close, at any scale, to metric cones splitting off an \(\mathbb{R}^{2m-2}\)-factor:
 \begin{lemma}\label{lem-singular set control}
 	There exists $0<\rho_0<1$ such that for all $0<r<1$, we have
 	\begin{equation}
 		Y\setminus Y^{\reg}\subset \mathcal S_{\rho_0,r}^{2m-3},
 	\end{equation}
where the latter is the effective singular stratum defined in \cite[Definition 1.2]{CN2013}.
 \end{lemma}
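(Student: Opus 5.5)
The plan is to combine the definition of the Cheeger--Naber effective singular stratum with Proposition \ref{savior}. Recall that, with the convention of \cite[Definition 1.2]{CN2013}, a point $x$ lies in $\mathcal S^{k}_{\eta,r}$ if and only if for every $s\in[r,1)$ the ball $B(x,s)$ is \emph{not} $(k+1,\eta)$-symmetric. Here $(k+1,\eta)$-symmetric means that $d_{\rm GH}(B(x,s),B_V(o,s))<\eta s$ for some metric cone $V=\mathbb R^{k+1}\times C(Z)$ with vertex $o$. Taking $k=2m-3$, the task is to find $\rho_0$ such that no point $x\in Y\setminus Y^{\reg}$ has a ball $B(x,s)$, with $s\in(0,1)$, that is $(2m-2,\rho_0)$-symmetric. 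The independence from $r$ is then automatic.

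I would argue by contradiction via rescaling. Suppose $x\in Y\setminus Y^{\reg}$ and $B(x,s)$ is $(2m-2,\rho_0)$-symmetric for some $s<1$. Rescale the metric by $s^{-2}$, so that $\lambda=s^{-1}\geq1$. The rescaled space $(Y,\lambda d_{\can})$ is $\rcd(-\lambda^{-2},2m)\subset\rcd(-1,2m)$ and non-collapsed, with the same uniform volume-ratio lower bound from \eqref{kazzo2}. After rescaling, $B(x,1)$ is $\rho_0$-GH close to the unit ball in a cone $V$ splitting off $\mathbb R^{2m-2}$. Proposition \ref{savior}, however, requires closeness at the much larger scale $\epsilon_0^{-1}$. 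To bridge this gap I would use the almost-volume-cone-implies-almost-metric-cone theorem for non-collapsed RCD spaces (De Philippis--Gigli), together with cone rigidity. If the unit ball is very close to a cone splitting off $\mathbb R^{2m-2}$, then the unit ball is nearly a cone. In the model $\mathbb R^{2m-2}\times C(S^1_\gamma)$, the volume ratio at $x$ is pinched between scales $\rho_0$-small and $1$. The cleaner way to do this is a compactness argument: take a sequence $\rho_j\to0$ and points $x_j\in Y\setminus Y^{\reg}$ with scales $s_j$.

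The steps, in order, are as follows. First, choose sequences $x_j\in Y\setminus Y^{\reg}$ and $s_j<1$ with $B(x_j,s_j)$ $(2m-2,1/j)$-symmetric. Second, pass to a pointed GH limit of $(Y,s_j^{-1}d_{\can},x_j)$. If $s_j\not\to0$, the limit is a ball in $Y$ itself; if $s_j\to0$, the limit is a non-collapsed $\rcd(0,2m)$ space. In either case, the unit ball of the limit is isometric to the unit ball of a cone $\mathbb R^{2m-2}\times C(S^1_\gamma)$, where the cross-section is a circle because it is $1$-dimensional with the right volume; by the $\gamma_0$-bound, $\gamma\geq\gamma_0$. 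Third, use volume convergence (Cheeger--Colding / De Philippis--Gigli) and Bishop--Gromov monotonicity to see that the volume ratio at $x_j$ is almost constant between scales $\theta s_j$ and $s_j$, for any fixed $\theta>0$. Applying almost-rigidity at every smaller scale, the balls $B(x_j,\theta s_j)$ are also close, after rescaling by $(\theta s_j)^{-1}$, to a cone splitting off $\mathbb R^{2m-2}$. Rescaling by $\epsilon_0/(\theta s_j)$ then produces closeness at radius $\epsilon_0^{-1}$ within $\epsilon_0$, once $\theta\leq\epsilon_0^2$ and $j$ is large. Fourth, Proposition \ref{savior} gives $x_j\in Y^{\reg}$, which is a contradiction.

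The main obstacle is the third step, namely propagating the splitting of an $\mathbb R^{2m-2}$ factor from the unit scale to the larger relative scale $\epsilon_0^{-1}$ needed by Proposition \ref{savior}. If this propagation proves delicate, my fallback is to note that Proposition \ref{savior} was itself proved by a contradiction argument at the cone level. Repeating that argument with the hypothesis ``$B(x,1)$ is $\epsilon$-close to a ball in such a cone'' should give the same conclusion directly. The reason is that the GH limit of such rescaled balls has a unit ball isometric to a cone ball with angle $\gamma\geq\gamma_0$, and \cite[Proposition 11]{Sz} applies to such limits through the tangent cone at the limit point, which by volume convergence coincides with $V$.
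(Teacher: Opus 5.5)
Your argument is correct and has the same skeleton as the paper's proof: contradiction, rescaling, then Proposition~\ref{savior}. However, the ``main obstacle'' you identify is not actually there. The symmetry error is relative to the scale. Suppose $d_{\rm GH}(B(x_i,s_i),B_{V_i}(o_i,s_i))\le \rho_i s_i$ with $s_i<1$. Rescaling $d_{\can}$ by $\lambda_i=\rho_i^{-1/2}s_i^{-1}\ge 1$ gives $d_{\rm GH}(B(x_i,\rho_i^{-1/2}),B_{V_i}(o_i,\rho_i^{-1/2}))\le\rho_i^{1/2}$. This is exactly the hypothesis of Proposition~\ref{savior} with $\epsilon=\rho_i^{1/2}$, and it is the paper's entire proof. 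It even works without contradiction: take $\rho_0<\epsilon_0^2$ and rescale by $(\epsilon_0 s)^{-1}\ge 1$.

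Your second and third steps (limit spaces, volume convergence, almost-volume-cone rigidity, identifying the cross-section as $S^1_\gamma$ with $\gamma\ge\gamma_0$) re-derive information already packaged inside Proposition~\ref{savior}, and they gain nothing. They only add things you would have to check:
\begin{itemize}
\item pointedness of the GH approximations;
\item that almost-rigidity produces a cone that still splits off $\mathbb R^{2m-2}$ (almost-volume-cone only gives closeness to \emph{some} cone);
\item the case $s_j\not\to 0$, where the $\gamma_0$-bound is not directly stated.
\end{itemize}

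Your final rescaling also has a bookkeeping slip. Rescaling by $\epsilon_0/(\theta s_j)$ sends $B(x_j,\theta s_j)$ to radius $\epsilon_0$, not $\epsilon_0^{-1}$. It works only because the radius-$\epsilon_0^{-1}$ ball is then a sub-ball of the rescaled $B(x_j,s_j)$. In other words, you are using the original hypothesis at scale $s_j$, with error $\epsilon_0/(\theta j)$. That is the paper's rescaling in disguise, and the propagation to smaller scales plays no role.
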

 \begin{proof}
 	This follows from a direct contradiction argument. Suppose not, then there would exist sequences $\rho_i\rightarrow 0$, $x_i\in Y\setminus Y^{\reg}$, $r_i\in (0,1)$ and $s_i\in (r_i,1)$ and a sequence of metric cones $(V_i, o_i)$ splitting off an $\mathbb R^{2m-2}$-factor such that
 	\begin{equation}
 		d_{\mathrm{GH}}(B(x_i,s_i), B(o_i, s_i))\leq \rho_i s_i.
 	\end{equation}Equivalently we would have
 	\begin{equation}
 		d_{\mathrm{GH}}(B(x_i,\rho_i^{-1/2}), B_{V_i}(o_i, \rho_i^{-1/2}))\leq \rho_i^{1/2}.
 	\end{equation}Since $\rho_i\rightarrow 0$, then for $i$ sufficiently large, we would get $x_i\in Y^{\reg}$ by Proposition \ref{savior}, which is a contradiction to the fact that $x_i\in Y\setminus Y^{\reg}$.
 \end{proof}
The following consequence will be crucial. Throughout the sequel, for any subset \(A\subset Y\), we denote by
\(T_r(A)\) its \(r\)-tubular neighborhood with respect to \(d_{\can}\).
 \begin{proposition}\label{thm-volume estimate for D0}
	There exists $0<\rho_0<1$ and $C>0$ such that for  all $0<r<1$, we have
	\begin{equation}\label{pathe2}
		\vol(T_r(Y\setminus Y^{\reg}),\omega_{\rm can}^m)\leq C(Y,\rho_0)r^{3-\rho_0}.
	\end{equation}
\end{proposition}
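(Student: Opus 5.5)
The plan is to combine Lemma \ref{lem-singular set control} with the quantitative stratification volume estimates of Cheeger--Naber \cite{CN2013}, transplanted to the non-collapsed $\rcd(-1,2m)$ setting given by Theorem \ref{thm--rcd}. Cheeger--Naber prove that for a non-collapsed space with Ricci bounded below, dimension $N$ and volume non-collapsing, the effective stratum satisfies
\begin{equation*}
\vol\big(T_r(\mathcal S^{k}_{\eta,r})\cap B(x,1)\big)\leq C(N,\kappa,\eta,\rho)\, r^{N-k-\rho}
\end{equation*}
for every $\rho>0$. Their proof uses only volume monotonicity (Bishop--Gromov), the almost volume cone implies almost metric cone principle, and cone-splitting, all of which hold for non-collapsed RCD spaces (De Philippis--Gigli), and the estimate has indeed been extended to that setting. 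So I will apply it with $N=2m$, $k=2m-3$ and $\eta=\rho_0$ from Lemma \ref{lem-singular set control}, with a fixed small $\rho\in(0,1)$, obtaining $\mathcal H^{2m}(T_r(\mathcal S^{2m-3}_{\rho_0,r})\cap B(x,1))\leq C r^{3-\rho}$.

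The steps, in order, are as follows. First, recall from Theorem \ref{thm--rcd} and \eqref{kazzo2} that $(Y,d_{\can})$ is a compact non-collapsed $\rcd(-1,2m)$ space, so the constants in the Cheeger--Naber estimate are uniform. Second, by Lemma \ref{lem-singular set control}, for every $0<r<1$ we have $Y\setminus Y^{\reg}\subset \mathcal S^{2m-3}_{\rho_0,r}$, hence $T_r(Y\setminus Y^{\reg})\subset T_r(\mathcal S^{2m-3}_{\rho_0,r})$. Third, cover the compact space $Y$ (its diameter is bounded) by finitely many unit balls; the number of balls is controlled by \eqref{kazzo2} and the upper volume bound from Bishop--Gromov. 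Summing the local estimates gives $\mathcal H^{2m}(T_r(Y\setminus Y^{\reg}))\leq C r^{3-\rho}$.

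Finally, I will identify $\mathcal H^{2m}$ with $\vol(\cdot,\omega_{\can}^m)$, up to the normalization constant. On $Y^\circ$ the metric is smooth Riemannian, so the two measures agree there, and $Y\setminus Y^\circ$ has $\mathcal H^{2m}$-measure zero by Theorem \ref{thm--rcd}. It also has zero $\omega_{\can}^m$-measure, since $\omega_{\can}^m=e^\vp f_*\mathcal M$ puts no mass on the analytic set $D$. Renaming $\rho$ as $\rho_0$ (shrinking if necessary, which is harmless since a smaller $\rho_0$ still works in Lemma \ref{lem-singular set control}) gives \eqref{pathe2}.

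The main obstacle is justifying the Cheeger--Naber volume estimate in the RCD setting rather than for Ricci limit spaces. I would either cite its extension to non-collapsed RCD spaces, or verify directly that each ingredient of \cite{CN2013} is available: the covering lemma built on the energy decomposition of the volume ratio, quantitative cone-splitting, and the fact that almost-constant volume ratio over a range of scales gives GH-closeness to a cone (De Philippis--Gigli's volume cone implies metric cone). A secondary subtlety is that \cite{CN2013} measures closeness in the definition of $\mathcal S^k_{\eta,r}$ at all scales $s\in[r,1]$; this is exactly the form produced by the contradiction argument in Lemma \ref{lem-singular set control}, so the two fit together.
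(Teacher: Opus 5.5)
Your proposal is correct and follows the paper's proof. The paper combines Lemma \ref{lem-singular set control} with \cite[Theorem 2.4]{ABS}, which extends \cite[Theorem 1.3]{CN2013} to non-collapsed RCD spaces and is exactly the estimate you invoke; your extra steps (covering by unit balls, identifying $\mathcal H^{2m}$ with $\omega_{\can}^m$-volume, and shrinking $\rho_0$, which is harmless by monotonicity of the strata) are routine details the paper leaves implicit.
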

\begin{proof}
	 This estimate  follows from  Lemma \ref{lem-singular set control} and \cite[Theorem 2.4]{ABS}, which is the extension to RCD spaces of \cite[Theorem 1.3]{CN2013}.
\end{proof}

%-------------------ADDED STUFF--------------

%In addition to consider $\eta$ and $\eta'$, we should introduce an extra parameter $\eta_0$
%\begin{equation*}
%	\bar \tau\gg \eta_0\gg \eta\gg \eta'
%\end{equation*}
%Then the $\mathcal L$-geodesic would avoid an $\eta_0$-neighborhood of $D_0$ since the volume estimate \eqref{thm-volume estimate for D0} has better rate than 2. Then for this fixed $\eta_0$, we repeat our previous argument on $Y\setminus T_{\eta_0}(Y\setminus Y^{\reg})$ and we can use the H\"older bound in Theorem \ref{zatta} with $r=\eta_0$. Then as always, we allow the parameter $\eta$ and $\eta'$ sufficiently small depending on $\eta_0$.
%Then our previous argument works by taking $\eta$ and $\eta'$ sufficiently small depending on $\eta_0$.

%----------------------OLD STUFF STARTS---------------
%On the other hand, thanks to Theorem \ref{zatta}, ...... for some $\alpha>0$.

Let $\alpha>0$ be the H\"older exponent given by Theorem \ref{thm-holder continuity of potential}, and choose $\ve_0,p_0>0$ such that
\begin{equation}\label{eq--choice of epsilon0'}
		\epsilon_0=\frac{\alpha}{8(1-\alpha/2)}, \quad \frac{2}{p_0}<\epsilon_0.
			\end{equation}
We then fix $\epsilon_1$ sufficiently small such that both Theorem \ref{thm--local holder} and Lemma \ref{lem--high integrability} hold, so that we have
	\begin{equation}
		 \mathcal R_{\epsilon_1} \subset Y^{\reg},\quad \frac{\omega_{\can}^m}{\omega_Y^m}\in L^{p_0}_{\loc}(\mathcal R_{\epsilon_1}, \omega_Y^m).
	\end{equation}
Then we decompose $D\subset Y$ into a disjoint union
\begin{equation}
\begin{aligned}
	D&=\left(D\cap (Y\setminus Y^{\reg})\right)\sqcup(D\cap (Y^{\rm reg}\setminus \mathcal R_{\epsilon_1}))\sqcup (D\cap \mathcal R_{\epsilon_1})\\
&=D_0\sqcup D_1\sqcup D_2.
\end{aligned}
\end{equation}
We now define a family of open subsets $\{W_\eta\}_{\eta\in (0,\eta_0]}$ by
\begin{equation}
	W_{\eta}:=\{y\in Y\mid d_{\can}(y, D_0)<\eta\}.
\end{equation}
We fix $\eta_0$ sufficiently small so that property (4) holds. Properties (1) and (3) are obvious, and property (2') follows from Proposition \ref{thm-volume estimate for D0}.
Applying Proposition \ref{prop-with controlled events} with $W=B^{d_T}(q,\delta)$, and for $T^{-1}\ll \ov{\eta}\ll\ov{\tau}\ll \delta$ we obtain a subset $\ov{\Omega}=\ov{\Omega}(T,\ov{\eta},\bar\tau,\delta)\subset B^{d_T}(q,\delta)$ with volume
\begin{equation}\label{eq--first apply}
	\vol(\ov{\Omega},\omega(T)^n)\geq \frac{1}{2}\vol(B^{d_T}(q,\delta), \omega(T)^n)\geq c\delta^{2m}e^{-(n-m)T},
\end{equation}such that for any $q'\in \ov{\Omega}$, there exists a minimizing
 $\mathcal L$-geodesic from $(p,0)_T$ to $(q', \ov{\tau})_T$ which is disjoint from $f^{-1}(W_{\ov{\eta}})$.
 By \eqref{eq--first apply} and the estimate on the volume form \eqref{volscal}, we know that
\begin{equation}
	\vol(\ov{\Omega},\omega_0^n)\geq c>0,
\end{equation}where very importantly the constant is independent of $T$.

For this fixed value of $\ov{\eta}$, we then define a second family of open subsets $\{U_\eta\}_{\eta\in (0,\eta_0]}$ by taking $\eta_0\ll\ov{\eta}$ and defining
\begin{equation}
	U_{\eta}:=\{y\in Y\mid d_{\can}(y, \ov{D_1\setminus T_{\ov{\eta}}(D_0)})<\eta\}.
\end{equation}
We fix $\eta_0$ sufficiently small so that property (4) holds. Properties (1) and (3) are obvious, and property (2) follows from this Theorem:
%, which uses the work of Cheeger-Naber \cite{CN2013} and its extension to $\rcd$ spaces \cite{ABS}:

\begin{theorem}
	For any $\epsilon>0$ and $\rho>0$ and $0<r<1$, we have
	\begin{equation}\label{pathe}
		\vol(T_r(Y\setminus \mathcal R_{\epsilon}),\omega_{\rm can}^m)\leq C(Y,\epsilon,\rho)r^{2-\rho}.
	\end{equation}
\end{theorem}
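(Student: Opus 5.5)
The plan is to argue as in Proposition \ref{thm-volume estimate for D0}. I would first show that $Y\setminus\mathcal R_\epsilon$ lies in a quantitative singular stratum of codimension two, and then invoke the Cheeger--Naber type volume estimate for quantitative strata on RCD spaces \cite[Theorem 2.4]{ABS}. The exponent $2-\rho$ in \eqref{pathe} is exactly what that theorem gives for the stratum $\mathcal S^{2m-2}_{\eta,r}$. This matches the dimension count: the complement of $\mathcal R_\epsilon$ should have codimension $2$, whereas the genuinely singular points $Y\setminus Y^{\reg}$ have codimension $3$, as in Lemma \ref{lem-singular set control}.

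\emph{Step 1 (containment).} The first step is to show that for every $\epsilon>0$ there is $\eta=\eta(\epsilon)>0$ such that
\begin{equation*}
Y\setminus\mathcal R_\epsilon\subset \mathcal S^{2m-2}_{\eta,r}\quad\text{for all } 0<r<1.
\end{equation*}
Here $\mathcal S^{k}_{\eta,r}$ is the effective singular stratum of \cite[Definition 1.2]{CN2013}: the set of points $x$ such that for no $s\in[r,1)$ is the ball $B(x,s)$ $\eta s$-close to a ball in a metric cone splitting off $\mathbb R^{k+1}$. I would prove the containment by contradiction, mirroring the proof of Lemma \ref{lem-singular set control}. Suppose it fails. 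Then there are sequences $\eta_i\to0$, points $x_i\notin\mathcal R_\epsilon$, scales $s_i\in(0,1)$, and cones $V_i$ splitting off $\mathbb R^{2m-1}$ with
\begin{equation*}
d_{\rm GH}\big(B(x_i,s_i),B_{V_i}(o_i,s_i)\big)\le\eta_i s_i .
\end{equation*}
A metric cone of dimension $2m$ that splits off $\mathbb R^{2m-1}$ must be $\mathbb R^{2m-1}\times C(\text{pt-pair})$, which is either $\mathbb R^{2m}$ or a half-space. The half-space is ruled out for noncollapsed RCD limits without boundary; this is where I would use that $Y$ has no boundary, e.g.\ via the structure of tangent cones on the regular part, or by noting that it is an RCD space homeomorphic to the normal variety $Y$. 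So after rescaling by $s_i^{-1}$, the balls are $\eta_i$-close to unit Euclidean balls. Volume convergence for noncollapsed RCD spaces (De Philippis--Gigli) then gives $\vol(B(x_i,s_i))/(\omega_{2m}s_i^{2m})\to1$.

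\emph{Step 2 (the main obstacle).} Knowing the volume ratio near $1$ at the single scale $s_i$ is not the same as knowing the limit as $r\to0$, which is what membership in $\mathcal R_\epsilon$ requires. I would bridge this with Bishop--Gromov monotonicity for $\rcd(-1,2m)$. The ratio $\vol(B(x,r))/V_{-1}(r)$ is nonincreasing in $r$, so its limit as $r\to0$ dominates its value at $r=s_i$, which is $\geq 1-\Psi(\eta_i)$, with $V_{-1}(r)$ comparable to $\omega_{2m}r^{2m}$ up to $1+O(s_i^2)$ for $s_i<1$. Hence
\begin{equation*}
\lim_{r\to0}\frac{\vol(B(x_i,r))}{\omega_{2m}r^{2m}}\geq 1-\Psi(\eta_i)>1-\epsilon
\end{equation*}
for $i$ large, so $x_i\in\mathcal R_\epsilon$, a contradiction.

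\emph{Step 3.} With $\eta=\eta(\epsilon)$ fixed, \cite[Theorem 2.4]{ABS} applied with $k=2m-2$ to the noncollapsed $\rcd(-1,2m)$ space $(Y,d_{\can})$ of Theorem \ref{thm--rcd} gives
\begin{equation*}
\vol\big(T_r(\mathcal S^{2m-2}_{\eta,r}),\omega_{\can}^m\big)\le C(Y,\eta,\rho)\,r^{2-\rho}.
\end{equation*}
This uses the uniform noncollapsing \eqref{kazzo2} and the bounded diameter of $Y$. The measure here is $\omega_{\can}^m$, which agrees with $\mathcal H^{2m}$ up to a constant since $Y\setminus Y^\circ$ is null. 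Since $T_r(Y\setminus\mathcal R_\epsilon)\subset T_r(\mathcal S^{2m-2}_{\eta,r})$ by Step 1, \eqref{pathe} follows with $C=C(Y,\epsilon,\rho)$.
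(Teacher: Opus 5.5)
Your proposal follows the paper's proof: the paper also uses volume convergence \cite{DeG} and Bishop--Gromov to get $Y\setminus\mathcal R_\epsilon\subset\mathcal S^{2m-2}_{\rho,r}$ for all $0<r<1$, then invokes \cite[Theorem 2.4]{ABS}, and your remark that the half-space $\mathbb R^{2m-1}\times[0,\infty)$ must be excluded (e.g.\ via $\partial Y=\emptyset$ and stability of empty boundary under noncollapsed limits) is a point the paper leaves implicit. The one slip is in Step 2: $V_{-1}(s_i)/(\omega_{2m}s_i^{2m})$ is close to $1$ only when $s_i\to0$, so when $s_i$ stays bounded below you should first pass to the sub-ball $B(x_i,\theta s_i)$, which is still $\Psi(\eta_i|\theta)\,\theta s_i$-close to a Euclidean ball, choosing $\theta=\theta(\epsilon)$ small so that $V_{-1}(\theta)\le(1+\epsilon/2)\,\omega_{2m}\theta^{2m}$, and only then apply volume convergence and Bishop--Gromov at scale $\theta s_i$.
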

\begin{proof}
	By the volume convergence \cite{DeG}
	and the Bishop-Gromov volume comparison, we know that there exists $\rho_0>0$ such that for any $0<r<1$ and $0<\rho<\rho_0,$
	\begin{equation}
		Y\setminus \mathcal R_{\epsilon}\subset \mathcal S_{\rho,r}^{2m-2}.
	\end{equation}%where the latter is the effective singular stratum defined in \cite[Definition 1.2]{CN2013}.
Taking $0<\rho<\rho_0$, estimate \eqref{pathe} follows from  \cite[Theorem 2.4]{ABS}.%, which is the extension to RCD spaces of \cite[Theorem 1.3]{CN2013}.
\end{proof}
The family $\{U_\eta\}$ thus satisfies the hypotheses of Proposition \ref{prop-with controlled events}, which we apply with the choices $\ve=\epsilon_0$ and $W=\ov{\Omega}$, and for $T^{-1}\ll \eta\ll\ov{\eta}\ll\ov{\tau}\ll \delta$ we obtain a subset $\Omega=\Omega(T,\eta,\ov{\eta},\bar\tau,\delta)\subset B^{d_T}(q,\delta)$ with volume
\begin{equation}
	\vol(\Omega,\omega(T)^n)\geq \frac{1}{4}\vol(B^{d_T}(q,\delta), \omega(T)^n)\geq c\delta^{2m}e^{-(n-m)T},
\end{equation}such that for any $q'\in \Omega$, there exists a minimizing
 $\mathcal L$-geodesic from $(p,0)_T$ to $(q', \ov{\tau})_T$ which has at most $\eta^{-\epsilon_0}$ $\eta$-events with respect to $\{U_\eta\}$. Again, we have
 $\vol(\Omega,\omega_0^n)\geq c>0,$ where $c$ does not depend on $T$.

For this fixed value of $\eta$, we then define a third family of open sets $\{U'_{\eta'}\}_{\eta'\in(0,\eta'_0]}$ by taking $\eta'_0\ll \eta$ and defining
\begin{equation}
	U'_{\eta'}:=\{y\in Y\mid d_{\can}(y, \overline{D\setminus (T_\eta(D_1)\cup T_{\ov{\eta}}(D_0))})<\eta'\},
\end{equation}
Observe that $\mathcal R_{\epsilon_1}$ is an open subset of $Y^{\rm reg}$ and
$\overline{D\setminus (T_\eta(D_1)\cup T_{\ov{\eta}}(D_0))}$ is a compact subset of $\mathcal R_{\epsilon_1}$, therefore we have that for $\eta'_0$ sufficiently small,
\begin{equation}
	U'_{\eta'}\subset \mathcal R_{\epsilon_1}\subset Y^{\rm reg}.
\end{equation}
Then we check that the properties (1)--(4) needed in Proposition \ref{prop-with controlled events} hold for $\{U'_{\eta'}\}_{\eta'\in (0,\eta_0']}$.
Again, we choose $\eta'_0$ sufficiently small so that property (4) holds, and properties (1) and (3) are obvious. For property (2), using the estimate in \eqref{eq--holder estimate} we see that
\begin{equation}
	U'_{\eta'}\subset V'_{C\eta'}:=\{y\in Y\mid d_{Y}(y, \overline{D\setminus (T_\eta(D_1)\cup T_{\ov{\eta}}(D_0))})<C\eta'\}.
\end{equation}
For $\eta'$ sufficiently small, $V'_{C\eta'}\subset Y^{\reg}$ is a $g_Y$-tubular neighborhood of a closed analytic subvariety inside a complex manifold (in particular, it is $(2m-2)$-rectifiable), and so the Minkowski content bound
\begin{equation}
\vol(V'_{C\eta'},\omega_Y^m)\leq C(\eta')^2,\quad 0<\eta'\leq \eta'_0,
\end{equation}
is well-known, see e.g. \cite[Theorems 3.2.39, 3.4.8]{Fe}. To prove property (2), we then use our choices of $p_0$ and $\epsilon_1$ and the H\"older inequality to bound
\begin{equation}
	\vol(U'_{\eta'},\omega_{\can}^m)\leq\left(\int_{U'_{\eta'}}\left(\frac{\omega_{\can}^m}{\omega_Y^m}\right)^{p_0}\right)^{\frac{1}{p_0}}
\vol(V'_{C\eta'},\omega_Y^m)^{1-\frac{1}{p_0}} \leq  C(Y,\eta) (\eta')^{2-\frac{2}{p_0}},
\end{equation}
Note that the constant $C$ depending on $\eta$, which is however fixed here. Then we can apply Proposition \ref{prop-with controlled events} to the
family $\{U'_{\eta'}\}$ and $W=\Omega$, where $\Omega$ here denotes the set obtained in the last step, and we obtain:

\begin{proposition}\label{judicious}
For $T^{-1}\ll \eta'\ll \eta\ll\ov{\eta}\ll\ov{\tau}\ll \delta$ there exists a subset $\Omega'\subset\Omega\subset\ov{\Omega} \subset B^{d_T}(q,\delta)$ with
\begin{equation}\label{eq-volume lower bound of good geodesics}
	\vol(\Omega',\omega(T)^n)\geq \frac{1}{8}\vol(B^{d_T}(q,\delta), \omega(T)^n),
 \end{equation}
such that for any $q'\in\Omega'$  there exists a minimizing $\mathcal L$-geodesic from $(p,0)_T$ to $(q', \ov{\tau})_T$ which has at most $(\eta')^{-\epsilon_0}$ $\eta'$-events with respect to $\{U'_{\eta'}\}$, and also at most $\eta^{-\epsilon_0}$ $\eta$-events with respect to  $\{U_{\eta}\}$, and is disjoint from $f^{-1}(W_{\ov{\eta}})$.
\end{proposition}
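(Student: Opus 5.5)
The proposition is obtained by running Proposition \ref{prop-with controlled events} three times in succession, each time on the output set of the previous step. At every step the quantity that must be preserved is a \(T\)-independent lower bound for the \(\omega_0^n\)-volume of the current set, which is exactly hypothesis \eqref{positive volume}.

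\emph{Step 1.} Apply the second part of Proposition \ref{prop-with controlled events} to the family \(\{W_\eta\}\), which satisfies (1), (2'), (3), (4) by Proposition \ref{thm-volume estimate for D0}, with \(W=B^{d_T}(q,\delta)\). Its \(\omega_0^n\)-volume is bounded below independently of \(T\), by \eqref{volscal} together with \(f^{-1}(B^{d_{\can}}(f(q),\delta/2))\subset B^{d_T}(q,\delta)\). This produces \(\ov\Omega\) with half the \(\omega(T)^n\)-volume, on which \(\mathcal L\)-geodesics can be chosen disjoint from \(f^{-1}(W_{\ov\eta/2})\). We replace \(\ov\eta\) by \(2\ov\eta\), harmlessly. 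Since \(\vol(\ov\Omega,\omega(T)^n)\ge c\delta^{2m}e^{-(n-m)T}\), \eqref{volscal} gives \(\vol(\ov\Omega,\omega_0^n)\ge c(\delta)>0\) uniformly in \(T\).

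\emph{Step 2.} With \(\ov\eta\) now fixed, apply the first part of Proposition \ref{prop-with controlled events} to \(\{U_\eta\}\), taking \(W=\ov\Omega\) and \(\epsilon=\epsilon_0\). Property (2) holds via the Cheeger--Naber type estimate \eqref{pathe} with exponent \(\rho<2\epsilon_0\), as required. This gives \(\Omega\subset\ov\Omega\) with \(\vol(\Omega,\omega(T)^n)\ge\frac12\vol(\ov\Omega,\omega(T)^n)\ge\frac14\vol(B^{d_T}(q,\delta),\omega(T)^n)\), and again \(\vol(\Omega,\omega_0^n)\ge c>0\) independently of \(T\).

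\emph{Step 3.} With \(\eta\) fixed, apply the proposition to \(\{U'_{\eta'}\}\) with \(W=\Omega\). Property (2) holds with \(\rho=2/p_0<\epsilon_0\), using the H\"older bound and Lemma \ref{lem--high integrability}. This produces \(\Omega'\subset\Omega\) with at least half the volume, hence \eqref{eq-volume lower bound of good geodesics}.

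The main point to watch is that the three conclusions must hold for the \emph{same} \(\mathcal L\)-geodesic. Proposition \ref{prop-with controlled events} only asserts the existence of some minimizing geodesic, so the conclusions do not combine automatically. I would handle this as in its proof: work only with \(q'\in W^{\reg}\), where the minimizing \(\mathcal L\)-geodesic from \((p,0)_T\) to \((q',\ov\tau)_T\) is unique. The bad sets are images of open sets of initial vectors and the set \(W\setminus W^{\reg}\) has measure zero, so each \(\Omega\) can be taken inside the regular set. On that set the unique geodesic inherits all three properties, since each earlier property is a property of that unique geodesic and is preserved when passing to a subset.

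The second thing to check is the order of parameters, \(T^{-1}\ll\eta'\ll\eta\ll\ov\eta\ll\ov\tau\ll\delta\). It is consistent because each family, and hence its constants, depends only on the parameters chosen before it: \(\{U_\eta\}\) depends on \(\ov\eta\), and \(\{U'_{\eta'}\}\) depends on \(\eta\) and \(\ov\eta\). The smallness condition on \(T\) is imposed last. The remaining normalization is condition (4), which holds by shrinking \(\eta_0\) and \(\eta'_0\).
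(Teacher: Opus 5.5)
Your proposal is correct and follows the paper's proof. Like the paper, you apply Proposition \ref{prop-with controlled events} three times in succession: to $\{W_\eta\}$ with $W=B^{d_T}(q,\delta)$, then to $\{U_\eta\}$ with $W=\ov{\Omega}$, then to $\{U'_{\eta'}\}$ with $W=\Omega$, carrying the $T$-independent $\omega_0^n$-volume lower bound from step to step. Your two additional remarks make explicit points the paper leaves implicit. The first is that all three properties hold for the same geodesic, because each output set lies in the regular set where the minimizing $\mathcal L$-geodesic is unique. The second is the $\ov{\eta}/2$ versus $\ov{\eta}$ discrepancy, which you fix by renaming.
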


\subsection{Completion of the proof}
We are now ready to complete the proof of Theorem \ref{main}, by proving the key estimate \eqref{key}. We adapt an idea from \cite[Remark 4.2]{LT}, but using crucially our results from the previous section.

For this, we apply Proposition \ref{judicious}, and let $\gamma$ be any of the minimizing $\mathcal{L}$-geodesics given by that, with endpoints in $\Omega'$, which is disjoint from $f^{-1}(W_{\ov{\eta}})$. We consider subsets $I\cup I'$ of  $[0, \ov{\tau}]$ defined by the property that $\tau \in I \Leftrightarrow \gamma(\tau) \in \tilde{U}_\eta=f^{-1}(U_{\eta})$ and $\tau \in I' \Leftrightarrow \gamma(\tau) \in \tilde{U}'_{\eta'}=f^{-1}(U'_{\eta'})$, and its complement
 \begin{equation}J=[0, \ov{\tau}] \backslash (I\cup I').\end{equation}By Lemma \ref{lem--lower bound of first bad time}, we know that every $\tau \in I\cup I'$ satisfies $\tau \geqslant C^{-1} \ov{\tau}$.

In the following we estimate $\int_I\left|\partial_\tau \gamma\right|_{\tilde{g}(\tau)} d \tau$ and $\int_{I'}\left|\partial_\tau \gamma\right|_{\tilde{g}(\tau)} d \tau.$
First, as in the proof of Proposition \ref{prop-with controlled events}, we call $\Omega'_{p}\subset T_pX$ the open subset which under $\mathcal{L}{\rm exp}_{p,\ov{\tau}}$ maps diffeomorphically onto an open subset of $\Omega'$ with full measure. As in \eqref{eq--fubini} we obtain
	\begin{equation}\label{eq--fubini2}
		\begin{aligned}
		\int_{C^{-1}\ov{\tau}}^{\ov{\tau}}\vol(\tilde U_\eta,\tilde \omega(\tau)^n)d\tau &\geq C^{-1}e^{-\frac{C}{\ov{\tau}}}|I| \vol(\Omega',\omega(T)^n),
		\end{aligned}	\end{equation}
and using the volume bounds in \eqref{ut} and \eqref{eq-volume lower bound of good geodesics}, and repeating this argument for $I'$, we see that there exists at least one of these $\mathcal L$-geodesics $\gamma$ (obtained through Proposition \ref{judicious}) such that
\begin{equation}\label{eq--upper bound on length}
\begin{aligned}
		|I|& \leqslant C \ov{\tau} \eta^{2-\epsilon_0} \delta^{-2 m} e^{C / \ov{\tau}},\\
		|I'|&\leq C \ov{\tau} \eta^{\prime 2-\epsilon_0} \delta^{-2 m} e^{C / \ov{\tau}},
\end{aligned}
\end{equation}
and is disjoint from $f^{-1}(W_{\ov{\eta}})$.
  Then using the scalar curvature bound \eqref{stscal} and the upper bound on the $\mathcal L$-distance \eqref{eq--upper bound of ldistance}, we can estimate
\begin{equation}\label{eq--length on I}
	\begin{aligned}
\int_I\left|\partial_\tau \gamma\right|_{\tilde{g}(\tau)} d \tau
& \leqslant C \ov{\tau}^{-\frac{1}{4}}\left(\int_I \sqrt{\tau}\left|\partial_\tau \gamma\right|_{\tilde{g}(\tau)}^2 d \tau\right)^{\frac{1}{2}}|I|^{\frac{1}{2}} \\
& \leqslant C(\delta,\bar\tau)\eta^{1-\epsilon_0}\left( \ov{\tau}^{\frac{3}{2}}+\int_I \sqrt{\tau}\left(R(\tilde{g}(\tau))+\left|\partial_\tau \gamma\right|_{\tilde{g}(\tau)}^2\right) d \tau\right)^{\frac{1}{2}} \\
& \leqslant  C(\delta,\bar\tau)\eta^{1-\epsilon_0}\left( \ov{\tau}^{\frac{3}{2}}+ \ov{\tau}^{-\frac{1}{2}}\right)^{\frac{1}{2}}=\Psi(\eta|\delta,\bar\tau) \eta^{\frac{1}{2}},
\end{aligned}
\end{equation}Similarly by taking $\eta'$ even smaller, we have
\begin{equation}\label{length on I'}
	\int_{I'}\left|\partial_\tau \gamma\right|_{\tilde{g}(\tau)} d \tau \leq \Psi(\eta'|\delta,\bar\tau,\eta)(\eta')^{\frac12}.
\end{equation}

  By construction, there are at most $\eta^{-\epsilon_0}$ $\eta$-events with respect to $\{U_\eta\}$, which we label by
$[\tau_{\entry,i}, \tau_{\exit,i}], i\in \mathcal{I}, |\mathcal{I}|\leq \eta^{-\ve_0}$ and let $I_{\eta}=\bigcup_{i\in\mathcal{I}}[\tau_{\entry,i}, \tau_{\exit,i}]\subset I$. Then for every $i\in\mathcal{I}$ we have
\begin{equation}
f\bigl(\gamma([\tau_{\entry,i}, \tau_{\exit,i}])\bigr) \subset U_{\eta},\quad f\bigl(\gamma([0,\bar\tau]\setminus I_{\eta} )\bigr) \subset Y\setminus U_{\eta/2}.
\end{equation}
Similarly, there exist at most $\eta'^{-\epsilon_0}$ intervals, which we label by $[\tau'_{\entry,i}, \tau'_{\exit,i}], i\in\mathcal{I}',|\mathcal{I}'|\leq \eta'^{-\ve_0},$
such that
\begin{equation}
f\bigl(\gamma([\tau'_{\entry,i}, \tau'_{\exit,i}])\bigr) \subset U'_{\eta'},\quad f\bigl(\gamma([0,\bar\tau]\setminus \bigcup_{i\in\mathcal{I}'}[\tau'_{\entry,i}, \tau'_{\exit,i}] )\bigr) \subset Y\setminus U'_{\eta'/2}.
\end{equation}

From the homeomorphism property in Theorem \ref{thm--rcd}, it follows that $d_{\can}$ is continuous with respect to $d_Y$, and so we see that there exists $r=r(\bar\eta)$ such that
\begin{equation}
	\{y\in Y\mid d_Y(y, Y\setminus Y^{\reg})\leq 2r\}\subset W_{\bar\eta}
\end{equation}
and since $f(\gamma)$ is disjoint from $W_{\ov{\eta}}$, we can apply Theorem \eqref{zatta} with $r(\ov{\eta})$ and obtain the H\"older estimate \eqref{holder} (with constant $C=C(\ov{\eta},\ov{\tau},\delta)$ and uniform constant $\alpha$). This implies that for every $i\in\mathcal{I}$ we can find a curve in $Y$ joining $f(\gamma(\tau_{\entry,i}))$ and $f(\gamma(\tau_{\exit,i}))$ whose $g_{\rm can}$-length is at most
\begin{equation}\label{path replacement}
\begin{aligned}
C (d_Y(f(\gamma(\tau_{\entry,i})),f(\gamma(\tau_{\exit,i})))^{\frac{\alpha}{2}}&\leq C\left(\int_{\tau_{\entry,i}}^{\tau_{\exit,i}}\left|\partial_\tau f(\gamma)\right|_{g_Y}d\tau \right)^{\frac{\alpha}{2}}\\
&\leq C \left(\int_{\tau_{\text {entry},i}}^{\tau_{\text {exit},i}}\left|\partial_\tau \gamma\right|_{\tilde{g}(\tau)} d \tau\right)^{\frac{\alpha}{2}},
\end{aligned}
\end{equation}
where we used \eqref{schwarz}.

We then perform a replacement procedure as follows. For each interval $[\tau'_{\entry,i}, \tau'_{\exit,i}], i\in\mathcal{I}'$, we consider a sub-interval (which could be empty) as follows. Let
\begin{equation}
\begin{aligned}
	\tau''_{\entry,i}&=\inf\{t\in [\tau'_{\entry,i}, \tau'_{\exit,i}]\mid t\notin I_{\eta}\},\\
	\tau''_{\exit,i}&=\sup\{t\in [\tau'_{\entry,i}, \tau'_{\exit,i}]\mid t\notin I_{\eta}\},
\end{aligned}
\end{equation}and
\begin{equation}
	 I_{\eta'}=\bigcup_{i\in\mathcal{I}'}[\tau''_{\entry,i},\tau''_{\exit,i}]\subset I'.
\end{equation}
We then delete the subintervals of $I_{\eta}$ that are contained in $I_{\eta'}$, i.e. we consider the subset $\ti{\mathcal I} \subset \mathcal{I}$ such that $i\in \ti{\mathcal I} $ if and only if $[\tau_{\entry,i},\tau_{\exit,i}]\subset  I_{\eta'},$
and define
\begin{equation}
	\widetilde I_{\eta}=\bigcup_{i\in\mathcal{I}\backslash\ti{ \mathcal I}}[\tau_{\entry,i},\tau_{\exit,i}]\subset I_{\eta}.
\end{equation}After this procedure, we obtain that  the interiors of $I_{\eta'}$ and $\widetilde I_{\eta}$ are disjoint, and we still have that $|\mathcal{I}'|\leq\eta'^{-\ve_0}$, and $|\mathcal{I}\backslash\ti{ \mathcal I}|\leq\eta^{-\ve_0}$.
Then we use the paths obtained in \eqref{path replacement} to replace the portion of $f(\gamma)$ with $\tau_{\entry,i}\leq \tau\leq \tau_{\exit,i}$ for $i\in \mathcal{I}\backslash\ti{ \mathcal I}$ and similarly for the $\tau''_i$'s with $i\in \mathcal{I}'$. We obtain a curve $\widetilde\gamma$ in $Y$ from $f(p)$ to $f(q)$ satisfying
\begin{equation}
	\begin{aligned}
&d_{\can}(f(p),f(q))\leq \mathrm{length}_{g_{\text {can }}}\left(\widetilde\gamma\right)\\
 & \leqslant \int_J\left|\partial_\tau f(\gamma)\right|_{g_{\text {can }}} d \tau+C \sum_{i=1}^{\eta^{-\epsilon_0}}\left(\int_{\tau_{\text {entry}, i}}^{\tau_{\text {exit}, i}}\left|\partial_\tau \gamma\right|_{\tilde{g}(\tau)} d \tau\right)^{\frac{\alpha}{2}} \\
&+C \sum_{i=1}^{(\eta')^{-\epsilon_0}}\left(\int_{\tau''_{\text {entry}, i}}^{\tau''_{\text{exit}, i}}\left|\partial_\tau \gamma\right|_{\tilde{g}(\tau)} d \tau\right)^{\frac{\alpha}{2}}+\Psi(T^{-1},\delta).
\end{aligned}
\end{equation}
Using \eqref{twyconv}, and the H\"older inequality for the sums, we get \begin{equation}
	\begin{aligned}
		d_{\can}(f(p),f(q))
\leqslant &  (1+\Psi(T^{-1}|\bar\tau))\int_J\left|\partial_\tau \gamma\right|_{\tilde{g}(\tau)} d \tau+ \Psi(T^{-1},\delta)\\
&+C \eta^{-\epsilon_0(1-\frac{\alpha}{2})}\left(\int_I\left|\partial_\tau \gamma\right|_{\tilde{g}(\tau)} d \tau\right)^{\frac{\alpha}{2}} \\
&+C (\eta')^{-\epsilon_0(1-\frac{\alpha}{2})}\left(\int_{I'}\left|\partial_\tau \gamma\right|_{\tilde{g}(\tau)} d \tau\right)^{\frac{\alpha}{2}} \\
	\end{aligned}
\end{equation}
Then using \eqref{eq--length on I} and \eqref{length on I'}, we obtain
\begin{equation}
	\begin{aligned}
		d_{\can}(f(p),f(q))
 \leqslant &(1+\Psi(T^{-1}|\bar\tau))\int_0^{\ov{\tau}}\left|\partial_\tau \gamma\right|_{\tilde{g}(\tau)} d \tau+\Psi(T^{-1},\delta)\\
&+\Psi(\eta|\delta,\ov{\eta},\bar\tau)  \eta^{\frac{\alpha}{4}-\epsilon_0(1-\frac{\alpha}{2})}+\Psi(\eta'|\delta,\ov{\eta},\bar\tau,\eta)(\eta')^{\frac{\alpha}{4}-\epsilon_0(1-\frac{\alpha}{2})} \\
 \leqslant & (1+\Psi(T^{-1}|\bar\tau))\int_0^{\ov{\tau}}\left|\partial_\tau \gamma\right|_{\tilde{g}(\tau)} d \tau+ \Psi(T^{-1},\delta)\\
 &+\Psi(\eta,T^{-1}|\delta,\ov{\eta},\bar\tau)+\Psi(\eta',T^{-1}|\eta,\ov{\eta},\delta,\bar\tau),
	\end{aligned}
\end{equation}
 where in the last inequality we have crucially used our choice of $\epsilon_0$ in \eqref{eq--choice of epsilon0'}.

Then we use the fact that $\gamma$ is a minimizing $\mathcal L$-geodesic from $(p,0)_T$ to $(q',\ov{\tau})_T$ to estimate
\begin{equation}
\begin{aligned}
\int_0^{\ov{\tau}}\left|\partial_\tau \gamma\right|_{\tilde{g}(\tau)} d \tau & \leqslant\left(\int_0^{\ov{\tau}} \sqrt{\tau}\left|\partial_\tau \gamma\right|_{\tilde{g}(\tau)}^2 d \tau\right)^{\frac{1}{2}}\left(\int_0^{\ov{\tau}} \frac{1}{\sqrt{\tau}} d \tau\right)^{\frac{1}{2}} \\
& \leqslant \sqrt{2} \ov{\tau}^{\frac{1}{4}}\left(C \ov{\tau}^{\frac{3}{2}}+\int_0^{\ov{\tau}} \sqrt{\tau}\left(R(\tilde{g}(\tau))+\left|\partial_\tau \gamma\right|_{\tilde{g}(\tau)}^2\right) d \tau\right)^{\frac{1}{2}} \\
& =\sqrt{2} \ov{\tau}^{\frac{1}{4}}\left(C \ov{\tau}^{\frac{3}{2}}+L_T\left(q^{\prime}, \ov{\tau}\right)\right)^{\frac{1}{2}}\\
&=\sqrt{2} \ov{\tau}^{\frac{1}{4}}\left(C \ov{\tau}^{\frac{3}{2}}+L_T\left(q, \ov{\tau}\right)+\Psi(\delta)\bar\tau^{-1/2}\right)^{\frac{1}{2}}
\end{aligned}	
\end{equation}
Therefore, by taking $\delta$ small, and then taking $\eta'\ll\eta\ll\ov{\eta}\ll\ov{\tau}\ll\delta$ and then $T$ sufficiently large, we have finally shown that
\begin{equation}
L_T(q,\ov{\tau})\geq \frac{1}{2\sqrt{\ov{\tau}}}d_{\can}(f(p),f(q))^2+\Psi(T^{-1}|\ov{\tau})+\Psi(\bar\tau),
\end{equation}which is precisely \eqref{key}.


\begin{thebibliography}{99}
\bibitem{ABS} G. Antonelli, E. Bru\`e, D. Semola, {\em Volume bounds for the quantitative singular strata of non-collapsed RCD metric measure spaces}, Anal. Geom. Metr. Spaces {\bf 7} (2019), no. 1, 158--178.
%\bibitem{BG} E. Bombieri, E. Giusti, {\em Harnack's inequality for elliptic differential equations on minimal surfaces},  Invent. Math. {\bf 15} (1972), 24--46.
\bibitem{Cao} H.-D. Cao, {\em Deformation of K\"ahler metrics to K\"ahler-Einstein metrics on compact K\"ahler manifolds}, Invent. Math. {\bf 81}  (1985), no. 2, 359--372.
\bibitem{CDS2} X. Chen, S.K. Donaldson, S. Sun, {\em K\"ahler-Einstein metrics on Fano manifolds. II: Limits with cone angle less than $2\pi$}, J. Amer. Math. Soc. {\bf 28} (2015), no. 1, 199--234.
\bibitem{CJN} J. Cheeger, W. Jiang, A. Naber, {\em Rectifiability of singular sets of noncollapsed limit spaces with Ricci curvature bounded below}, Ann. of Math. (2) {\bf 193} (2021), no. 2, 407--538.
\bibitem{CN2013} J. Cheeger, A. Naber, {\em Lower bounds on Ricci curvature and quantitative behavior of singular sets}, Invent. Math. {\bf 191} (2013), no. 2, 321--339.
\bibitem{CN2012} T.H. Colding, A. Naber, {\em Sharp H\"older continuity of tangent cones for spaces with a lower Ricci curvature bound and applications},  Ann. of Math. (2) {\bf 176} (2012), 1173--1229.
\bibitem{CGZ} D. Coman, V. Guedj, A. Zeriahi, {\em Extension of plurisubharmonic functions with growth control}, J. Reine Angew. Math. {\bf 676} (2013), 33--49.
\bibitem{deng2015} Q. Deng, {\em H\"older continuity of tangent cones in RCD$(K,N)$ spaces and applications to nonbranching}, Geom. Topol. {\bf 29} (2025), no. 2, 1037--1114.
\bibitem{DeG} G. De Philippis, N. Gigli, {\em Non-collapsed spaces with Ricci curvature bounded from below}, J. \'Ec. polytech. Math. {\bf 5} (2018), 613--650.
\bibitem{DZ} S. Dinew, Z. Zhang, {\em On stability and continuity of bounded solutions of degenerate complex Monge-Amp\`ere equations over compact K\"ahler manifolds}, Adv. Math. {\bf 225} (2010), no. 1, 367--388.
\bibitem{DS1} S.K. Donaldson, S. Sun, {\em Gromov-Hausdorff limits of K\"ahler manifolds and algebraic geometry}, Acta Math. {\bf 213} (2014), no. 1, 63--106.
\bibitem{DS2} S.K. Donaldson, S. Sun, {\em Gromov-Hausdorff limits of K\"ahler manifolds and algebraic geometry, II}, J. Differential Geom. {\bf 107} (2017), no. 2, 327--371.
\bibitem{EGZ} P. Eyssidieux, V. Guedj, A. Zeriahi, {\em Singular K\"ahler-Einstein metrics}, J. Amer. Math. Soc. {\bf 22} (2009), 607--639.
\bibitem{Fe} H. Federer, {\em Geometric measure theory}, Die Grundlehren der mathematischen Wissenschaften, Band 153. Springer-Verlag New York, Inc., New York, 1969.
\bibitem{Fi} S. Finski, {\em On the metric structure of section ring}, preprint, arXiv:2209.03853.
\bibitem{GTZ} M. Gross, V. Tosatti, Y. Zhang, \emph{Collapsing of abelian fibered Calabi-Yau manifolds}, Duke Math. J. {\bf 162} (2013), no. 3, 517--551.
\bibitem{GTZ2} M. Gross, V. Tosatti, Y. Zhang, {\em Geometry of twisted K\"ahler-Einstein metrics and collapsing},  Comm. Math. Phys. {\bf 380} (2020), no.3, 1401--1438.
\bibitem{GGZ} V. Guedj, H. Guenancia, A. Zeriahi, {\em Diameter of K\"ahler currents}, J. Reine Angew. Math. {\bf 820} (2025), 115--152.
\bibitem{guedjT} V. Guedj, T.D. T{\^o}, {\em K\"ahler families of Green's functions}, J. \'Ec. polytech. Math. {\bf 12} (2025), 319--339.
\bibitem{Gu} B. Guo, {\em On the K\"ahler Ricci flow on projective manifolds of general type},  Int. Math. Res. Not. IMRN 2017, no. 7, 2139--2171.
\bibitem{GKSS} B. Guo, S. Ko\l odziej, J. Song, J. Sturm, {\em H\"older estimates for degenerate complex Monge-Amp\`ere equations}, preprint, arXiv:2508.20933.
\bibitem{GPSS} B. Guo, D.H. Phong, J. Song, J. Sturm, {\em Diameter estimates in K\"ahler geometry}, Comm. Pure Appl. Math. {\bf 77} (2024), no. 8, 3520--3556.
\bibitem{GPSS2} B. Guo, D.H. Phong, J. Song, J. Sturm, {\em Diameter estimates in K\"ahler geometry II: removing the small degeneracy assumption}, Math. Z. {\bf 308} (2024), no. 3, Paper No. 43, 7 pp.
\bibitem{GPSS3} B. Guo, D.H. Phong, J. Song, J. Sturm, {\em Sobolev inequalities on K\"ahler spaces}, preprint, arXiv:2311.00221.
\bibitem{GSW} B. Guo, J. Song, B. Weinkove, {\em Geometric convergence of the K\"ahler-Ricci flow on complex surfaces of general type},  Int. Math. Res. Not. IMRN 2016, no. 18, 5652--5669.
\bibitem{HLT} H.-J. Hein, M.-C. Lee, V. Tosatti, {\em Collapsing immortal K\"ahler-Ricci flows},  Forum Math. Pi {\bf 13} (2025), Paper No. e18.
\bibitem{Kol} S. Ko\l odziej, {\em H\"older continuity of solutions to the complex Monge-Amp\`ere equation with the right-hand side in $L^p$: the case of compact K\"ahler manifolds}, Math. Ann. {\bf 342} (2008), no. 2, 379--386.
\bibitem{JS} W. Jian, J. Song, {\em Diameter estimates for long-time solutions of the K\"ahler-Ricci flow}, Geom. Funct. Anal. {\bf 32} (2022), no. 6, 1335--1356.
\bibitem{LiT} C. Li, {\em K\"ahler-Einstein metrics and K-stability}, PhD thesis, Princeton University, 2012.
\bibitem{Li} Y. Li, {\em On collapsing Calabi-Yau fibrations}, J. Differential Geom. {\bf 117} (2021), no. 3, 451--483.
\bibitem{LT} Y. Li, V. Tosatti, {\em On the collapsing of Calabi-Yau manifolds and K\"ahler-Ricci flows}, J. Reine Angew. Math. {\bf 800} (2023), 155--192.
\bibitem{LiuSz} G. Liu, G. Sz\'ekelyhidi, {\em  Gromov-Hausdorff limits of K\"ahler manifolds with Ricci curvature bounded below}, Geom. Funct. Anal. {\bf 32} (2022), 236--279.
\bibitem{perelman} G. Perelman, {\em The entropy formula for the Ricci flow and its geometric applications}, preprint, arXiv:math/0211159.
\bibitem{So} J. Song, {\em Riemannian geometry of K\"ahler-Einstein currents}, preprint, arXiv:1404.0445.
\bibitem{ST0} J. Song, G. Tian, {\em The K\"ahler-Ricci flow on surfaces of positive Kodaira dimension}, Invent. Math. {\bf 170} (2007), no. 3, 609--653.
\bibitem{ST} J. Song, G. Tian, {\em Canonical measures and K\"ahler-Ricci flow}, J. Amer. Math. Soc. {\bf 25} (2012), no. 2, 303--353.
\bibitem{ST2} J. Song, G. Tian, {\em Bounding scalar curvature for global solutions of the K\"ahler-Ricci flow}, Amer. J. Math. {\bf 138} (2016), no. 3, 683--695.
\bibitem{ST3} J. Song, G. Tian, {\em The K\"ahler-Ricci flow through singularities}, Invent. Math. {\bf 207} (2017), no. 2, 519--595.
\bibitem{STZ} J. Song, G. Tian, Z. Zhang, {\em Collapsing behavior of Ricci-flat K\"ahler metrics and long time solutions of the K\"ahler-Ricci flow}, preprint, arXiv:1904.08345.
\bibitem{stol} G. Stolzenberg, {\em  Volumes, limits, and extensions of analytic varieties}, Lecture Notes in Mathematics, No. 19. Springer-Verlag, Berlin-New York, 1966.
\bibitem{Sz2} G. Sz\'ekelyhidi, {\em Singular K\"ahler-Einstein metrics and RCD spaces}, Forum Math. Pi {\bf 13} (2025), Paper No. e24, 33 pp.
\bibitem{Sz} G. Sz\'ekelyhidi, {\em Gromov-Hausdorff limits of collapsing Calabi-Yau fibrations}, preprint, arXiv:2505.14939.
\bibitem{TiZh} G. Tian, Z. Zhang, {\em On the K\"ahler-Ricci flow on projective manifolds of general type}, Chinese Ann. Math. Ser. B {\bf 27} (2006), no. 2, 179--192.
\bibitem{TiZ} G. Tian, Z. Zhang, {\em Convergence of K\"ahler-Ricci flow on lower dimensional algebraic manifolds of general type},  Int. Math. Res. Not. IMRN 2016, no. 21, 6493--6511.
\bibitem{Ts} H. Tsuji, \emph{Existence and degeneration of K\"ahler-Einstein metrics on minimal algebraic varieties of general type}, Math. Ann. {\bf 281} (1988), no. 1, 123--133.
\bibitem{To} V. Tosatti, {\em Immortal solutions of the K\"ahler-Ricci flow}, to appear in Contemp. Math.
\bibitem{TWY2} V. Tosatti, B. Weinkove, X. Yang, {\em Collapsing of the Chern-Ricci flow on elliptic surfaces},  Math. Ann. {\bf 362} (2015), no. 3-4, 1223--1271.
\bibitem{TWY} V. Tosatti, B. Weinkove, X. Yang, {\em The K\"ahler-Ricci flow, Ricci-flat metrics and collapsing limits},  Amer. J. Math. {\bf 140} (2018), no. 3, 653--698.
\bibitem{TZ3} V. Tosatti, Y. Zhang, {\em Finite time collapsing of the K\"ahler-Ricci flow on threefolds}, Ann. Sc. Norm. Super. Pisa Cl. Sci. {\bf 18} (2018), no.1, 105--118.
\bibitem{varolin} D. Varolin, {\em Division theorems and twisted complexes}, Math. Z. {\bf 259} (2008), no. 1, 1--20.
\bibitem{vu2024} D.V. Vu, {\em Uniform diameter and non-collapsing estimates for K\"ahler metrics}, J. Geom. Anal. {\bf 36} (2026), no. 2, Paper No. 75.
\bibitem{Wang} B. Wang, {\em The local entropy along Ricci flow. Part A: the no-local-collapsing theorems}, Camb. J. Math. {\bf 6} (2018), no.3, 267--346.
\bibitem{ZhK} K. Zhang, {\em Some refinements of the partial $C^0$ estimate}, Anal. PDE {\bf 14} (2021), no. 7, 2307--2326.
\bibitem{Zh} Z. Zhang, {\em Scalar curvature bound for K\"ahler-Ricci flows over minimal manifolds of general type}, Int. Math. Res. Not. IMRN 2009, no. 20, 3901--3912.
\end{thebibliography}
\end{document}